\theoremstyle{plain}
\newtheorem{theoreme}{Theorem}
\newtheorem{coro}{Corollary}
\newtheorem{lemme}{Lemma}
\newtheorem{prop}{Proposition}
\newtheorem{thmext}{Theorem}
\theoremstyle{definition}
\theoremstyle{remark}
\newtheorem*{remarque}{Remark}
\newcommand\numberthis{\stepcounter{equation}\tag{\theequation}}
\renewcommand{\setminus}{\smallsetminus}
\newcommand{\ssum}[1]{\sum_{\substack{#1}}}
\newcommand{\e}{{\rm e}}
\newcommand{\dd}{{\rm d}}
\newcommand{\hT}{T}
\newcommand{\cC}{{\mathcal C}}
\newcommand{\cD}{{\mathcal D}}
\newcommand{\cV}{{\mathcal V}}
\newcommand{\cK}{{\mathcal K}}
\newcommand{\cH}{{\mathcal H}}
\newcommand{\cR}{{\mathcal R}}
\newcommand{\cU}{{\mathcal U}}
\newcommand{\cu}{{\varrho}}
\newcommand{\ee}{{\varepsilon}}
\newcommand{\tphi}{\tilde{\phi}}
\newcommand{\tsigma}{\tilde{\sigma}}
\newcommand{\bx}{{a}}
\newcommand{\by}{{b}}
\newcommand{\bz}{{z}}
\newcommand{\sfa}{{ a}}
\newcommand{\sfb}{{ b}}
\newcommand{\bfD}{{\mathbf D}}
\newcommand{\bfN}{{\mathbf N}}
\newcommand{\bfZ}{{\mathbf Z}}
\newcommand{\bfQ}{{\mathbf Q}}
\newcommand{\bfR}{{\mathbf R}}
\newcommand{\bfC}{{\mathbf C}}
\newcommand{\bfUn}{{\mathbf 1}}
\newcommand{\frakH}{{\mathfrak H}}
\newcommand{\hess}{{\rm Hess}}
\newcommand{\Prob}{{\rm Prob}}
\newcommand{\CpR}{{\bfC\smallsetminus\bfR_-}}
\newcommand{\fr}{{\mathfrak r}}
\newcommand{\fs}{{\mathfrak s}}
\renewcommand{\tilde}{\widetilde}
\newcommand{\tg}{{\tilde g}}
\newcommand{\card}{{\rm card\ }}
\newcommand{\ubar}{{\bar u}}
\newcommand{\floor}[1]{{\left\lfloor {#1} \right\rfloor}}
\renewcommand\Re{\operatorname{\mathfrak{Re}}}
\renewcommand\Im{\operatorname{\mathfrak{Im}}}
\DeclareMathOperator{\argmin}{argmin}
\numberwithin{equation}{section}
\title{On the average distribution of divisors of friable numbers}
\author{Sary Drappeau}
\date{\today}
\address{I2M, Aix-Marseille Université
\\ 163 avenue de Luminy
\\ 13009 Marseille, France}
\email{sary-aurelien.drappeau@univ-amu.fr}
\begin{document}

\maketitle

\begin{abstract}
A number is said to be~$y$-friable if it has no prime factor greater than~$y$. In this paper, we prove a central limit theorem on average for the distribution of divisors of~$y$-friable numbers less than $x$, for all~$(x,y)$ satisfying $2\leq y\leq \e^{(\log x)/(\log\log x)^{1+\ee}}$. This was previously known under the additional constraint~$y\geq \e^{(\log\log x)^{5/3+\ee}}$, by work of Basquin. Our argument relies on the two-variable saddle-point method.
\end{abstract}


\section{Introduction}

An integer~$n\geq 1$ is said to be~$y$-friable, or~$y$-smooth, if its greatest prime factor~$P(n)$ is less than or equal to~$y$, with the convention~$P(1)=1$. We denote
\[ S(x, y) := \big\{n\leq x :\quad P(n)\leq y\big\}, \]
\[ \Psi(x, y) := \card S(x, y) .\]
Friable integers are a recurrent object in analytic number theory: we refer the reader to the surveys~\cite{survey-ht, survey-granville, survey-moree} for an overview of recent results and applications. An important aspect of results about friable numbers is their uniformity with respect to~$y$. The difficulty in this context is the fact that~$y$-friable numbers tend to rarefy very rapidly -- much more so than what would be expected from sieve heuristics, for instance. In this respect, analytic methods have proven to be very effective. The object of this paper is to study, using these analytic methods, the distribution of divisors of friable numbers on average.

For any~$n\geq 1$, define~$D_n$ to be the random variable taking the value~$\log d$ where~$d$ is chosen among the~$\tau(n)$ divisors of~$n$ with uniform probability. It was shown by Tenenbaum~\cite{Tene-Div2} that~$D_n/\log n$ does not converge in law on any sequence of integers~$n$ of positive upper density in~$\bfN$. However it can be expected that the discrepancies arising from the erratic behaviour of the multiplicative structure are smoothed out upon averaging over~$n$. When one averages over all the integers, this question was settled by Deshouillers, Dress and Tenenbaum~\cite{DDT} who established
\begin{equation}
\frac1x\sum_{n\leq x}\Prob(D_n\leq t \log n) = \frac2\pi\arcsin\sqrt{t} + O\Big(\frac1{\sqrt{\log x}}\Big), \qquad (t\in [0, 1]). \label{estim-DDT}
\end{equation}
The error term here is optimal if one seeks full uniformity with respect to~$\gamma$. See also~\cite{BM-DDT} for a generalization (where one changes the probability measure one puts on the divisors).

Expectedly, the analog problem for friable numbers has a different structure. Choosing a divisor of~$n$ at random is equivalent to choosing an integer~$k\in\{0, \ldots, \nu\}$ uniformly at random for every factor~$p^\nu\|n$ appearing in the decomposition of~$n$. We may thus write
\begin{equation}
D_n = \sum_{p^\nu\| n} D_{p^\nu}\label{eq:D-xi}
\end{equation}
where the variables~$D_{p^\nu}$ on the right-hand side are independent. As is known from work of Alladi~\cite{Alladi} and Hildebrand~\cite{Hilde-omega}, the number~$\omega(n)$ of prime factors of~$n\in S(x, y)$ typically tends to grow with~$n$, in such a way that we may expect the sum~\eqref{eq:D-xi} to satisfy the central limit theorem. We are therefore led to the prediction that
\begin{equation}
\Prob\big(D_n\geq \tfrac12\log n + v \cu_n\big) \approx \Phi(v) := \int_v^{\infty} \frac{\e^{-z^2/2}\dd z}{\sqrt{2\pi}}\label{eq:Dn-TCL}
\end{equation}
for all fixed~$v\in\bfR$ and almost all integers~$n\in S(x, y)$, where~$\cu_n$ denotes the standard deviation of~$D_n$, given by
\begin{equation}
\cu_n^2 = \sum_{p^\nu \| n} \frac{\nu(\nu+2)}{12}(\log p)^2.\label{eq:def-V}
\end{equation}
La Bretèche and Tenenbaum~\cite[Corollaire~2.2]{BT2002} consider the case of the primorial number~$N_1(y) :=\prod_{p\leq y}p$ (which is the largest square-free $y$-friable number). They obtain
\begin{equation}
\Prob\big(D_{N_1}\geq \tfrac12\log N_1 + v\cu_{N_1(y)} \big) = \Phi(v)\Big\{1 + O\Big(\frac{1+v^4}{y/\log y}\Big)\Big\}\label{estim-BT-N1}
\end{equation}
for~$0\leq v\ll(y/\log y)^{1/4}$. Note that~$\cu_{N_1(y)}^2 \sim (y\log y)/4$. We emphasize that there is no average over the integers under study. Another related example considered recently by Tenenbaum~\cite[Corollaire~1.4]{Tene-Ultra}, is the case of~$N_2(y) := \prod_{p\leq y}p^{\floor{(\log y)/\log p}}$. There, Tenenbaum obtains an analogous result to~\eqref{estim-BT-N1}.

Such a law obviously does not hold for all~$y$-friable numbers, as illustrated by the example of~$N_3(y) = 2^{\floor{y/\log 2}}$ (which is roughly of the same size as~$N_1$ and~$N_2$, but for which~$D_{N_3}$ converges to the uniform law). It is therefore natural to ask what the output is, if we on average over friable numbers. One option would be to study the average
$$ \frac1{\Psi(x, y)}\sum_{n\in S(x, y)} \Prob(D_n\geq \tfrac12\log n + v\cu_n). $$
However, a more interesting variant is deduced from observing that an additive function of~$n$ naturally appears in the formula~\eqref{eq:def-V}. A fundamental result in probabilistic number theory, the Tur\'{a}n-Kubilius inequality, developped in the context of friable numbers by La Bretèche and Tenenbaum~\cite{BT-TK}, ensures the existence of a quantity~$\cu(x, y)$ \emph{independent of~$n$} such that
\begin{equation}
\cu_n \sim \cu(x, y)\label{eq:cun-cu}
\end{equation}
for a relative proportion~$1 + o(1)$ of integers~$n\in S(x, y)$, when~$y\to\infty$ and~$y=x^{o(1)}$. The exact definition of~$\cu(x, y)$ involves the \emph{saddle-point}~$\alpha(x, y)$, defined as the only positive solution to the equation
$$ \sum_{p\leq y} \frac{\log p}{p^\alpha-1} = \log x. $$
Then the approximation~\eqref{eq:cun-cu} holds with
$$ \cu = \cu(x, y) := \Big(\frac14\sum_{p\leq y}\frac{p^\alpha-\frac13}{(p^\alpha-1)^2}(\log p)^2\Big)^{1/2}. $$
We will prove below that
$$ \cu(x, y)^2 \sim (\log x)(\log y)\big(\frac14 + \frac{\log x}{6y}\big) \qquad\qquad (y \to \infty,\ y=x^{o(1)}). $$
In view of the above, we consider for~$v\in\bfR$ the quantity
\begin{align*}
D(x, y ; v) :=\ & \frac1{\Psi(x, y)}\sum_{n\in S(x, y)}\Prob(D_n\geq \tfrac12\log n + v\cu ) \numberthis\label{def-D} \\
=\ & \frac1{\Psi(x, y)}\sum_{n\in S(x, y)}\frac1{\tau(n)}\ssum{d|n\\d\geq n^{1/2}\e^{v\cu}}1 \qquad\qquad (2\leq y\leq x,\ v\in\bfR).
\end{align*}
The asymptotic behaviour of~$D(x, y ; v)$ was studied previously by Basquin~\cite{Basquin}\footnote{To be precise, in~\cite{Basquin}, Basquin studies the slight variant where~$\e^{v\cu}$ is replaced by~$n^{v\cu/\log x}$. This change does not affect the estimate of Theorem~\ref{thm-basquin}.} for relatively large values of~$y$. There, Basquin quantifies the shift from the arcsine law~\eqref{estim-DDT} to a contracted normal law similar to~\eqref{estim-BT-N1}: we refer the reader to~\cite[Théorème~1.1]{Basquin} for more details about this transition. We shall focus on the gaussian behaviour for small values of~$y$: let
\[ u := (\log x)/\log y, \qquad \ubar := \min\{u, \pi(y)\}, \]
where~$\pi(y)$ denotes the counting function of primes. Then Theorem~1.1 and Corollary~1.3 of~\cite{Basquin} (along with~\cite[equation~(7.19)]{HT86} to relate~$\cu$ with the quantity~$\xi'(u)$ involved there) imply the following.
\begin{thmext}\label{thm-basquin}
Then for all~$\ee>0$ and all~$x$ and~$y$ satisfying
\begin{equation}\label{domaine-He}\tag{$H_\ee$} \exp\{(\log \log x)^{5/3+\ee}\} \leq y \leq x ,
\end{equation}
we have
\begin{equation}\label{estim-basquin} D(x, y ; v) = \Phi(v) + O_\ee\Big(\frac1u + \frac1{\sqrt{\log y}} + \frac{\log(u+1)}{\log y}\Big) \qquad (v\in\bfR). \end{equation}  
\end{thmext}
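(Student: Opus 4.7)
The plan is to establish the central limit theorem on average by combining a characteristic-function argument with a two-variable saddle-point analysis. For each $n$ with $P(n)\leq y$, the characteristic function of $D_n - \tfrac12\log n$ is
\[ \hat\mu_n(\theta) = \frac{1}{\tau(n)}\sum_{d\mid n}(d/\sqrt{n})^{i\theta} = \prod_{p^\nu \| n}\frac{1}{\nu+1}\sum_{k=0}^{\nu}p^{i(k-\nu/2)\theta}, \qquad \theta \in \bfR. \]
I would introduce the two-variable Dirichlet series
\[ G(s, \theta) := \sum_{P(n)\leq y}\frac{\hat\mu_n(\theta)}{n^s} = \prod_{p\leq y} g_p(s, \theta), \qquad g_p(s, \theta) := \sum_{\nu\geq 0}\frac{p^{-\nu s}}{\nu+1}\sum_{k=0}^{\nu}p^{i(k-\nu/2)\theta}, \]
which at $\theta = 0$ collapses to the friable zeta function $\zeta(s, y) = \prod_{p\leq y}(1-p^{-s})^{-1}$, apply Perron's formula to $\sum_{n\in S(x, y)}\hat\mu_n(\theta)$, and shift the contour to the standard friable saddle line $\Re s = \alpha(x, y)$. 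The problem is then reduced to the local behaviour of $G(s, \theta)$ near $(\alpha, 0)$, and a smoothed Berry--Esseen-type inequality (in the spirit of~\cite{BT2002}) converts the resulting asymptotic on the averaged characteristic function into a bound for $D(x, y; v) - \Phi(v)$.

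The key algebraic input is a Taylor expansion of $\log g_p(\alpha, \theta)$ in $\theta$ around $0$. The linear term vanishes by parity since $\sum_{k=0}^{\nu}(k - \nu/2) = 0$, and the identity $\sum_{k=0}^{\nu}(k-\nu/2)^2 = \nu(\nu+1)(\nu+2)/12$, after resumming the series in $\nu$, yields
\[ \partial_\theta^2 \log g_p(\alpha, 0) = -\frac{(\log p)^2}{4}\cdot\frac{p^\alpha - \frac13}{(p^\alpha - 1)^2}. \]
Summation over $p \leq y$ gives $-\partial_\theta^2 \log G(\alpha, 0) = \cu^2$, matching the very definition of $\cu(x, y)$. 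Consequently,
\[ G(\alpha, \theta) = \zeta(\alpha, y)\,\e^{-\theta^2 \cu^2/2}\bigl(1 + O(|\theta|^3 B_3)\bigr) \]
in a neighbourhood of $\theta = 0$, where $B_3 \ll \sum_{p\leq y}(\log p)^3 p^{-\alpha}/(1-p^{-\alpha})^3$ is a third-moment quantity. Combined with the classical Hildebrand--Tenenbaum saddle-point formula $\Psi(x, y) \sim x^\alpha \zeta(\alpha, y)/(\alpha\sigma\sqrt{2\pi})$, this yields the Gaussian asymptotic $\Psi(x,y)^{-1}\sum_{n\in S(x, y)}\hat\mu_n(\theta)\sim \e^{-\theta^2\cu^2/2}$ in a suitable window.

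Inserting this into Berry--Esseen at scale $\cu^{-1}$ produces three error contributions matching those in~\eqref{estim-basquin}: the $O(1/u)$ term arises from the Berry--Esseen smoothing bias at the natural scale $\cu$; the $O(1/\sqrt{\log y})$ term reflects the third-moment error $B_3/\cu^3 \ll 1/\sqrt{\log y}$ together with the saddle-point evaluation of $\Psi$; and the $O(\log(u+1)/\log y)$ term absorbs the Mertens-type residual terms appearing in the Euler product. The main obstacle, and the reason for the restriction $y\geq\exp\{(\log\log x)^{5/3+\ee}\}$, is bounding $G(\alpha+it, \theta)$ uniformly on the complementary ranges --- both the vertical tails of the Perron integral and the moderate $\theta$-range where the quadratic expansion degrades. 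Handling these forces one to exploit oscillation in the Euler product, which in turn relies on a Vinogradov--Korobov-type zero-free region for $\zeta(\cdot, y)$; the quality of such a region deteriorates precisely as $y$ descends toward the threshold above, setting the natural limit of this approach.
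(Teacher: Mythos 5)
You should first note that the paper does not actually prove Theorem~\ref{thm-basquin}: it is quoted from Basquin~\cite{Basquin}, whose argument is the ``indirect'' saddle-point method, i.e.\ one first evaluates the inner sum over~$n$ by Smida-type estimates for friable means of multiplicative functions (generalizing~\eqref{estim-hildebrand}), which is where both the range~\eqref{domaine-He} and the error term~$\log(u+1)/\log y$ come from. Your route --- characteristic functions of~$D_n-\tfrac12\log n$, Perron at the Hildebrand--Tenenbaum saddle point~$\alpha$, then an Esseen smoothing inequality --- is genuinely different from that, and is in fact essentially the one-variable specialization of the method of the present paper: your series satisfies~$G(s,\theta)=F_y(s+i\theta/2,\,s-i\theta/2)$, and your variance computation (which is correct, including~$\sum_{k\leq\nu}(k-\nu/2)^2=\nu(\nu+1)(\nu+2)/12$ and~$-\partial_\theta^2\log G(\alpha,0)=\cu^2$) simply recovers the definition of~$\cu$.

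However, as a proof of the stated theorem your sketch has concrete gaps. First, the closing explanation of the range is wrong in mechanism: $\zeta(s,y)$ is a finite Euler product with no zeros in~$\Re s>0$, and the vertical-line decay of~$G(\alpha+it,\theta)$ needed for the Perron tails is of Hildebrand--Tenenbaum type (\cite[Lemma~8]{HT86}, or Lemma~\ref{decr-fy}/Corollary~\ref{decr-F} here) and holds for \emph{all}~$2\leq y\leq x$ with no zero-free-region input --- which is exactly why Theorem~\ref{thm-pcp} of the paper operates far outside~\eqref{domaine-He}. The exponent~$5/3$ enters Basquin's theorem through the Vinogradov--Korobov-quality error in the prime number theorem feeding the Dickman-function approximations of the indirect method, a method you are not using; as written, nothing in your argument uses, needs, or explains the hypothesis~$(H_\ee)$, nor derives the specific error terms (the attribution of~$\log(u+1)/\log y$ to ``Mertens-type residuals'' has no counterpart in your steps). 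Second, the third-moment step is incorrect: one has~$B_3/\cu^3\asymp(\ubar)^{-1/2}$, not~$O(1/\sqrt{\log y})$, and~$(\ubar)^{-1/2}$ exceeds the allowed error whenever~$u\leq\log y$; what saves the argument is that the cubic term vanishes identically, since~$d\mapsto n/d$ makes each~$\widehat\mu_n(\theta)$ real and even in~$\theta$, so the first correction is a fourth-moment term~$O(\theta^4\phi_4(\alpha,y))$, of acceptable size~$O(1/\ubar)$ --- but that is a different argument from the one you wrote. Third, the analytic core is missing: to get an error~$O(1/u)$ from Esseen you must control the averaged characteristic function for~$|\theta|$ up to roughly~$\sqrt{\ubar}/\log y$, i.e.\ many standard deviations, where the quadratic expansion degrades and one needs genuine decay estimates (the analogues of Corollary~\ref{decr-F}(i)--(ii)) together with uniformity of the saddle-point evaluation as the saddle moves with~$\theta$; these are precisely the lemmas the paper has to build, and your proposal only gestures at them.
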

The range of validity in~$x$ and~$y$ here is inherent to the method used, which is based on the ``indirect'' saddle-point method (see also~\cite{Saias}). The purpose of the present work is to introduce a variant of the two-variable (direct) saddle-point method which allows us to obtain a significant improvement of the range of validity and of the error term in Theorem~\ref{thm-basquin}.
\begin{theoreme}\label{thm-pcp}
Let~$\ee>0$. Whenever~
\begin{equation}\label{eq:domaine-th1}\tag{$G_\ee$}
x\geq 3, \qquad 2\leq y \leq \e^{(\log x)/(\log\log x)^{1+\ee}},
\end{equation}
and~$0\leq v \ll (\ubar)^{1/4}$, we have
\begin{equation}
\label{pcp-1}
D(x, y ; v) = \Phi(v)\Big\{1 + O_\ee\Big(\frac{1+v^4}{\ubar}\Big)\Big\} .
\end{equation}
\end{theoreme}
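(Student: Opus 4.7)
The plan is to apply the two-variable saddle-point method to the Dirichlet series
\[ F(s,w) := \sum_{P(n)\leq y}\frac{1}{\tau(n)\,n^{s+w}}\sum_{d\mid n}d^{2w} = \prod_{p\leq y}F_p(s,w), \]
whose Euler factors admit the closed form
\[ F_p(s,w) = \sum_{\nu\geq 0}\frac{p^{-\nu(s+w)}}{\nu+1}\cdot\frac{p^{2(\nu+1)w}-1}{p^{2w}-1}. \]
At $w=0$ this recovers the $y$-friable zeta function $\prod_{p\leq y}(1-p^{-s})^{-1}$; a short calculation (matching the fact that divisors of $p^\nu$ have mean log-size $(\nu/2)\log p$) gives $\partial_w F_p(s,0)\equiv 0$ and
\[ \partial_w^2\log F_p(\alpha,0) = \frac{(\log p)^2\,(p^\alpha-\tfrac{1}{3})}{(p^\alpha-1)^2}, \qquad\text{hence}\qquad \partial_w^2\log F(\alpha,0) = 4\cu^2 . \]
Writing $\kappa:=\e^{2v\cu}$, two Mellin--Perron inversions (in $s$ to detect $n\leq x$, in $w$ to detect $d^2\geq n\kappa$) reduce the evaluation of $\Psi(x,y)D(x,y;v)$ to that of a double contour integral of $F(s,w)\,x^s\kappa^{-w}/(sw)$, up to an effective Perron truncation error.

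\textbf{Saddle point.} I would then pick $(\alpha,\beta)=(\alpha(x,y,v),\beta(x,y,v))$ as the unique real solution of the joint system
\[ -\partial_s\log F(\alpha,\beta)=\log x,\qquad \partial_w\log F(\alpha,\beta)=2v\cu. \]
Since $\partial_w F(\cdot,0)\equiv 0$, this collapses to $(\alpha(x,y),0)$ when $v=0$. The Hessian $\bfH$ of $\log F$ at $(\alpha(x,y),0)$ has $ww$-entry $4\cu^2$ (computed above) and $ss$-entry matching the variance appearing in the Hildebrand--Tenenbaum analysis of $\Psi(x,y)$; positive definiteness follows by Cauchy--Schwarz applied to the underlying prime sums. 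The implicit function theorem then yields $\beta\asymp v/\cu$ and $\alpha-\alpha(x,y)\ll v^2/\cu^2$ uniformly in $0\leq v\ll\ubar^{1/4}$. Shifting both contours to pass through $(\alpha,\beta)$ and setting $(s,w) = (\alpha+i\tau,\beta+i\eta)$, a second-order Taylor expansion of $\log F$ produces a two-dimensional Gaussian in $(\tau,\eta)$; its evaluation, together with the factor $1/(sw)$, should reproduce $\Psi(x,y)\Phi(v)$ as the leading term, as in the bivariate unfolding used in~\cite{BT2002}.

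\textbf{Error terms and constant.} The cubic and quartic Taylor remainders are expected to produce the error $O((1+v^4)/\ubar)$ in~\eqref{pcp-1}: the third derivatives of $\log F$ at the saddle should be of order $\cu^2/\sqrt{\ubar}$, giving a $v^3/\sqrt{\ubar}$ correction absorbed into $\Phi(v)$, while the fourth-order terms produce the explicit $(1+v^4)/\ubar$ factor. The same prime-by-prime analysis, combined with standard asymptotics for $\alpha(x,y)$, yields the claimed asymptotic $\cu(x,y)^2\sim(\log x)(\log y)(\tfrac14+\tfrac{\log x}{6y})$.

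\textbf{Main obstacle.} The critical difficulty is controlling the double contour away from the saddle, uniformly down to $y$ as small as $\e^{(\log x)/(\log\log x)^{1+\ee}}$. In this range $\alpha\to 0$ as $u$ grows, and the classical Hildebrand--Tenenbaum decay $|F(\alpha+i\tau,0)/F(\alpha,0)|\leq\exp(-c\cu^2\tau^2)$ is only marginally available; one needs the same decay \emph{uniformly} in the new variable $\eta$, while staying clear of the new periodic singularities at $w\in(\pi i/\log p)\bfZ$ of each Euler factor $F_p$. Concretely, I would establish, on $|\tau|+|\eta|\leq (\log y)^{-1}$, a bivariate bound
\[ \Big|\frac{F(\alpha+i\tau,\beta+i\eta)}{F(\alpha,\beta)}\Big|\ll\exp\big(-c\cu^2(\tau^2+\eta^2)\big) , \]
by extracting the cancellation prime-by-prime in the spirit of~\cite{BT-TK}, together with a coarser ``far'' tail bound for larger $|\tau|+|\eta|$. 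Pushing these estimates down to the new range of $y$ -- the main improvement over Basquin's indirect saddle-point treatment~\cite{Basquin} -- is where the novelty of the argument will lie.
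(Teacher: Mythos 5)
Your overall framework (the symmetric two-variable series, the saddle system, the identity $\partial_w^2\log F(\alpha,0)=4\cu^2$) matches the paper's setup after the linear change of variables, but two steps in your plan have genuine gaps.

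First, your key analytic input is the bivariate bound $|F(\alpha+i\tau,\beta+i\eta)/F(\alpha,\beta)|\ll\exp\{-c\cu^2(\tau^2+\eta^2)\}$ claimed on all of $|\tau|+|\eta|\leq(\log y)^{-1}$. This is false precisely in the new range that the theorem is about: when $y$ is small compared with $\log x$ (allowed under $(G_\ee)$), one has $\sigma_2\asymp\cu^2\gg y\log y$, and the decay of the Euler product saturates once $\tau^2\sigma_2\gg y/\log y$ -- the true bound is of the shape $\big(1+\tau^2\sigma_2\frac{\log y}{y}\big)^{-c y/\log y}$, as in Lemma~8 of \cite{HT86} and in Lemma~\ref{decr-fy}/Corollary~\ref{decr-F} here, which at $|\tau|\asymp1/\log y$ is much weaker than your Gaussian. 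So the estimate you propose to prove ``prime-by-prime'' cannot be established in that range; one must work with the saturating form (and with a separate regime when $\alpha\ll1/\log y$), which is exactly what forces the more delicate truncation analysis in the paper. (Incidentally, the ``periodic singularities'' of $F_p$ at $w\in(\pi i/\log p)\bfZ$ you plan to avoid do not exist: $\sum_{d\mid p^\nu}d^{2w}$ is a finite geometric sum, entire in $w$.)

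Second, your error analysis cannot deliver $\Phi(v)\{1+O((1+v^4)/\ubar)\}$ for $v$ up to $\ubar^{1/4}$ as written. You assert that the third-order terms give ``a $v^3/\sqrt{\ubar}$ correction absorbed into $\Phi(v)$''; a genuine relative correction of that size would be of order $\ubar^{1/4}$ at the top of the range and would destroy the statement. The missing idea is a symmetry: since $F(s,-w)=F(s,w)$ (equivalently, $d\mapsto n/d$), the exponent at the saddle is an \emph{even} function of $v$, so all odd-order corrections vanish identically and the first term beyond $-v^2/2$ is of size $v^4/\ubar$; this is the content of Lemma~\ref{lemme-E} and it is indispensable, not a bookkeeping convenience. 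Finally, on the kernel $1/(sw)$: your direct unfolding (as in \cite{BT2002}) is the route the paper deliberately avoids, because $1/w$ varies on the same scale $\asymp1/\cu$ as the Gaussian and the abscissa $\beta\asymp v/\cu$ sits arbitrarily close to the pole at $w=0$ for small $v$; the paper instead truncates the contours, differentiates with respect to $v$ (which cancels the problematic factor), applies the saddle-point method to the derivative, and integrates back, treating small $v$ by a separate Rankin argument. A direct treatment may well be feasible, but your proposal does not engage with these points, and together with the two gaps above it does not yet yield the stated theorem.
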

The condition~$y \leq \e^{(\log x)/(\log\log x)^{1+\ee}}$ is purely technical. For~$x$ and~$y$ in the complementary range~$u\leq (\log\log y)^{1+\ee}$, the Gaussian approximation is less relevant and the methods of~\cite{Basquin} are better suited.

The range~$v\ll (\ubar)^{1/4}$ is the natural range of validity of the Gaussian approximation. As is typically the case in large deviation theory, one could expect an asymptotic formula to hold in the range~$v\ll (\ubar)^{1/2-\ee}$ by adding correction terms to the exponent~$z^2/2$ in the definition~\eqref{eq:Dn-TCL} of~$\Phi(v)$. We prove that such is indeed the case.
\begin{theoreme}\label{thm:pcp-2}
Let~$(x, y)\in (G_\ee)$. There exists a sequence of numbers~$(b_j(x, y))_{j\geq 0}$ satisfying
$$ b_0(x, y) = -1/2, \qquad b_j(x, y) \ll_j (\ubar)^{-j}, $$
such that the following holds. Let~$k\geq 1$ and
$$ v_{\text{max}} \asymp (\ubar)^{k/(2k+2)} $$
be given, and assume that~$0\leq v\leq v_{\text{max}}$. Letting
\begin{equation}
\cR_k(z) = \cR_k(x, y ; z) := \sum_{j=0}^{k-1} b_j(x, y) z^{2(j+1)} \qquad (z\geq 0),\label{eq:def-Rk}
\end{equation}
we have
\begin{equation}
\label{pcp-2}
\begin{aligned}
D\big(x, y ; v\big) = \Big\{1 + O_k\Big(\frac{1+v^2}{\ubar}+\frac{v^{2(k+1)}}{(\ubar)^k}\Big)\Big\}
\int_v^{2v_{\text{max}}} \e^{\cR_k(z)}\frac{\dd z}{\sqrt{2\pi}} .
\end{aligned}
\end{equation}
\end{theoreme}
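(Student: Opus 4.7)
The plan is to extend the two-variable saddle-point analysis underlying Theorem~\ref{thm-pcp} by retaining more terms in the Taylor expansion of the phase around the saddle point. The starting point is a double contour-integral representation. Using the factorization
\[\tilde{\Phi}_n(-s_2):=\frac1{\tau(n)}\sum_{d\mid n}\Big(\frac{d}{\sqrt{n}}\Big)^{s_2}=\prod_{p^\nu\|n}\frac1{\nu+1}\sum_{j=0}^{\nu}p^{(j-\nu/2)s_2},\]
an application of Perron's formula in each of two variables gives
\[\Psi(x,y)\,D(x,y;v)=\frac1{(2\pi i)^2}\int_{(\sigma_1)}\int_{(\sigma_2)}F(s_1,s_2)\frac{x^{s_1}}{s_1}\frac{\e^{-s_2 v\cu}}{s_2}\dd s_2\dd s_1,\]
where $F(s_1,s_2):=\prod_{p\leq y}\sum_{\nu\geq 0}p^{-\nu s_1}\tilde{\Phi}_{p^\nu}(-s_2)$ is a two-variable Euler product.

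Next I would locate the joint saddle point $(\alpha_v,\beta_v)$ of the phase $\log F(s_1,s_2)+s_1\log x-s_2 v\cu$, solving the pair of equations $\partial_{s_1}\log F=-\log x$ and $\partial_{s_2}\log F=v\cu$. At $v=0$ this reduces to $(\alpha,0)$ with $\alpha=\alpha(x,y)$ the classical friable saddle point, recovering the setting of Theorem~\ref{thm-pcp}. By the evident symmetry of the Euler factors (the exponents $j-\nu/2$ are symmetric about zero), $F(s_1,s_2)$ is even in $s_2$ for every $s_1$, so $\log F(\alpha,\cdot)$ is even; implicit differentiation then yields $\beta_v=\gamma_0 v+\gamma_1 v^3+\cdots$ as a formal series in odd powers of $v$ with coefficients $\gamma_j\ll_j(\ubar)^{-j-1/2}$, and the same symmetry forces $\cR_k$ to contain only even powers of $z$.

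The new ingredient is to push the Taylor expansion of $\log F(\alpha_v+it_1,\beta_v+it_2)$ in the $s_2$-direction up to order $2k+2$, rather than just order $2$ as in the proof of Theorem~\ref{thm-pcp}. After integrating the Gaussian-like kernel in $t_1$ and $t_2$ about the saddle and dividing out by $\Psi(x,y)$, the main term assumes the claimed form $\int_v^{2v_{\max}}\e^{\cR_k(z)}\dd z/\sqrt{2\pi}$; the coefficients $b_j(x,y)$ are identified as explicit algebraic combinations of the derivatives $\partial_{s_2}^{2j+2}\log F(\alpha,0)$ suitably normalized by powers of $\cu$. In particular, the identity $b_0=-1/2$ reflects the direct computation $\partial_{s_2}^2\log F(\alpha,0)=\cu^2$, which can be checked against the definition of~$\cu$ by summing the Euler factors. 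Replacing the upper limit $\infty$ by $2v_{\max}$ introduces only a Gaussian-tail error absorbed into the main error term.

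The main obstacle will be making all estimates uniform for $v\in[0,v_{\max}]$ with $v_{\max}\asymp(\ubar)^{k/(2k+2)}$. This demands careful control of the higher Taylor coefficients of $\log F$ in the $s_2$-direction so that each contribution $b_j(x,y)v^{2(j+1)}$ to $\cR_k(v)$ remains of bounded size throughout the range; control on the tails of the $s_2$-integral beyond a shrinking window around $\beta_v$, for which the quadratic approximation of the phase is no longer adequate; and a separate treatment of small primes~$p$, for which the Euler factor is not uniformly well-approximated by its Taylor polynomial in~$s_2$. Once these bounds are in place---most of them being refinements of computations already carried out for Theorem~\ref{thm-pcp}---the conclusion follows from a routine saddle-point calculation.
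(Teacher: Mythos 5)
Your overall setup (double Perron formula, the symmetric two-variable Euler product, a $v$-dependent joint saddle $(\alpha_v,\beta_v)$, and even-in-$s_2$ symmetry forcing even powers in $\cR_k$) matches the paper's framework, and your check that $\partial_{s_2}^2\log F(\alpha,0)=\cu^2$ correctly explains $b_0=-1/2$. But there is a genuine gap at the decisive step: you assert that after "integrating the Gaussian-like kernel in $t_1$ and $t_2$ about the saddle" the main term "assumes the claimed form" $\int_v^{2v_{\text{max}}}\e^{\cR_k(z)}\,\dd z/\sqrt{2\pi}$. A fixed-$v$ saddle-point evaluation does not produce an integral over $z$ at all; it produces (at best) something of the shape $\e^{\cR(v)}$ divided by $\beta_v$ and the square root of the Hessian. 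The obstruction is the kernel factor $1/s_2$: the $s_2$-saddle sits at $\beta_v\asymp v/\sqrt{\sigma_2}$, which for bounded $v$ is within one Gaussian width of the pole at $s_2=0$, so $1/s_2$ cannot be treated as locally constant and the "routine" saddle-point formula is simply wrong in the main range $v=O(1)$ (note $\Phi(v)\neq \e^{-v^2/2}/(v\sqrt{2\pi})$ there). The paper's proof exists precisely to handle this: it truncates the contours, \emph{differentiates with respect to $v$} (which cancels the offending factor, since $\partial_v$ brings down $\cu(w-s)$ against $1/(s-w)$), evaluates the resulting clean saddle-point integral as $I_1'(z)\approx-\Psi(x,y)\e^{\cR(z)}/\sqrt{2\pi}$ uniformly for $z\in[v,v_m]$, and then integrates back over $z$, which is where the main term $\int_v^{v_m}\e^{\cR(z)}\,\dd z$ comes from, together with a separately proved crude bound for $I_1(v_m)$. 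Your proposal contains no substitute for this device (nor for a direct treatment of the convolution of the Gaussian with the $1/s_2$ kernel in the spirit of la Bret\`eche--Tenenbaum), so the claimed form of the main term is unjustified.

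A secondary inaccuracy: the coefficients $b_j$ for $j\geq1$ are not algebraic combinations of the pure derivatives $\partial_{s_2}^{2j+2}\log F(\alpha,0)$ alone. Because the $s_1$-component of the saddle also moves with $v$ (the equation $\partial_{s_1}\log F(\alpha_v,\beta_v)=-\log x$ couples the two variables), the expansion of the saddle-point exponent in powers of $v$ involves mixed derivatives $\partial_{k\ell}$ of the phase; in the paper this is organized through the differential equation defining $\beta(v)$ and the uniform bounds $E^{(j)}(v)\ll_j(\ubar)^{1-j/2}$ on $\cV=[-c_6\sqrt{\ubar},c_6\sqrt{\ubar}]$ (Lemmas~\ref{props-beta} and~\ref{lemme-E}), which is also what gives $b_j\ll_j(\ubar)^{-j}$ uniformly in the range $v\leq v_{\text{max}}\asymp(\ubar)^{k/(2k+2)}$. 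Relatedly, the polynomial $\cR_k$ arises from Taylor-expanding the saddle-point value $E(v)$ in $v$, not from pushing the local expansion of the integrand in the integration variables to order $2k+2$; at fixed $v$ the paper only ever needs the quadratic approximation plus an $O((1+v^2)/\ubar)$ relative error.
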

\begin{remarque}
Note that~$\cR_k(z) = -z^2/2 + O(z^4/\ubar)$, which explains the shape of the error term in~\eqref{pcp-1}.

The coefficients~$b_j(x, y)$ for~$j\geq 1$ could be expressed, if one wished, as an explicit but complicated expression involving sums over primes less than~$y$ and the saddle-point~$\alpha(x, y)$ defined below. As~$\ubar\to\infty$, they can be approximated by elementary expressions involving~$x$ and~$y$, in the same shape as formula~\eqref{estim-phi2} below. We refrain to do so here.
\end{remarque}

\subsection{The saddle-point method}

We now recall the explanation for the limitation on~$y$ in the estimate of Basquin~\cite{Basquin}. The range~\eqref{domaine-He} is classical in the study of friable numbers: it is typically linked to the approximation of~$\Psi(x, y)$ by Dickman's function\footnote{Dickman's function~$\rho$ is the unique continuous function on~$\bfR_+$ which is differentiable on~$(1, \infty)$, satisfies~$\rho(u)=1$ for~$u\in[0, 1]$, and~$u\rho'(u)+\rho(u-1)=0$ for~$u>1$. We have~$\rho(u)=u^{-u+o(u)}$ as~$u\to\infty$.}~$\rho$:
\begin{equation}\label{estim-hildebrand} \Psi(x, y) = x\rho(u)\Big\{1+O_\ee\Big(\frac{\log(u+1)}{\log y}\Big)\Big\} \qquad ((x, y)\in (H_\ee)). \end{equation}
This estimate is a theorem of Hildebrand~\cite{Hild86}, improving in particular De Bruijn's work~\cite{dB1951}. The range~$(H_\ee)$ is tighly linked to the best known error term in the prime number theorem: it was shown by Hildebrand~\cite{Hild84} that if one could prove the weaker estimate~$\Psi(x, y) = x\rho(u)\exp\{O(y^\ee)\}$ for~$y\geq(\log x)^{2+\ee}$, for all fixed~$\ee>0$, then the Riemann hypothesis would follow.

In many applications however, including that of interest here, one seeks a control on the local variations of~$\Psi(x, y)$ with respect to~$x$, rather than a control of~$\Psi(x, y)$ itself. By ``local variations'' we mean, for instance, quantities of the shape~$\Psi(x/d, y)/\Psi(x, y)$ for relatively small~$d\geq 1$. The saddle-point approach to estimating~$\Psi(x, y)$, developped by Hildebrand and Tenenbaum~\cite{HT86}, is very suitable for such applications: it enabled very substantial progress to be made in the last decades regarding the uniformity with respect to~$y$, for example in friable analogs of the Tur\'{a}n--Kubilius inequality~\cite{BT-TK} or distribution of friable numbers in arithmetic progressions~\cite{Sound2008}.

We now recall Hildebrand and Tenenbaum's result. When~$2\leq y\leq x$, the saddle-point~$\alpha(x, y)$ is defined as the positive real number satisfying
\begin{equation}\label{def-alpha} \sum_{p\leq y}\frac{\log p}{p^\alpha-1} = \log x .\end{equation}
It is therefore the positive number optimizing Rankin's simple but remarkably efficient upper bound
\begin{equation}\label{majo-rankin}
\Psi(x, y) \leq \min_{\sigma>0} \zeta(\sigma, y)x^\sigma, 
\end{equation}
where
\[ \zeta(s, y) := \prod_{p\leq y}(1-p^{-s})^{-1} = \sum_{P(n)\leq y}n^{-s}\quad (\Re(s)>0). \]
Here and in what follows, the letter~$p$ always denotes a prime number. As was pointed out in~\cite{HT86}, the point~$s=\alpha(x, y)$ is a saddle point for the Mellin transform~$x^s\zeta(s, y)$ relevant to~$\Psi(x, y)$:
\[ \Psi(x, y) = \frac1{2\pi i}\int_{\sigma-i\infty}^{\sigma+i\infty}x^s\zeta(s, y)\frac{\dd s}s, \qquad (x\not\in\bfN, \sigma>0). \]
Letting
\[ \phi_2(s, y) = \sum_{p\leq y}\frac{(\log p)^2p^s}{(p^s-1)^2} \qquad (\Re(s)>0), \]
they obtain for~$2\leq y\leq x$ the following estimate~\cite[Theorem~1]{HT86}~:
\begin{equation}\label{estim-psi-col}
\Psi(x, y) = \frac{\zeta(\alpha, y)x^\alpha}{\alpha\sqrt{2\pi \phi_2(\alpha, y)}}\Big\{1 + O\Big(\frac1\ubar\Big)\Big\}.
\end{equation}
The denominator~$\alpha\sqrt{2\pi\phi_2(\alpha, y)}$ in~\eqref{estim-psi-col} may be estimated using~\cite[Theorem~2.(ii)]{HT86}. We have
\begin{equation}\label{estim-alpha}
\alpha(x, y) = \frac{\log(1+y/(\log x))}{\log y}\Big\{1 + O\Big(\frac{\log\log(1+y)}{\log y}\Big)\Big\} ,\end{equation}
\begin{equation}\label{estim-phi2}
\phi_2(x, y) = \Big(1+\frac{\log x}y\Big)(\log x) \log y \Big\{1 + O\Big(\frac1{\log(u+1)}+\frac1{\log y}\Big)\Big\}
.\end{equation}
However, the question of approximating~$\zeta(\alpha, y)x^\alpha$ up to an factor~$(1+o(1))$ by a smooth and explicit function of~$x$ and~$y$ -- for instance, in terms of the Dickman function~$\rho$, is tightly related to the error term in the prime number theorem. In a way, $\alpha$ encodes the irregularities in the distribution of prime numbers that prevent us from having a smooth, explicit estimate for~$\Psi(x, y)$ when~$(x, y)\not\in (H_\ee)$ for all~$\ee>0$.

On the other hand, the local variations of~$\alpha(x, y)$ with respect to~$x$ are relatively well controlled~: such local estimates were obtained by La Bretèche--Tenenbaum~\cite{BT2005}. We note however that at the current state of knowledge, when~$(x, y)\not\in(H_\ee)$, we are not able to deduce from them an equivalent \textit{e.g.}\ of the quantity~$\sqrt{\Psi(x^2, y)}/\Psi(x, y)$, or the quantity
\begin{equation}
  \label{sum-1surtau}
  \frac1{\Psi(x, y)}\sum_{n\in S(x, y)}\frac1{\tau(n)}.
\end{equation}
This is hinted, for instance, by the fact that the error terms of~\cite[Théorème~2.4]{BT2005}, which result from the estimation of~$\Psi(x/d, y)/\Psi(x, y)$, are of the same size as the main term if~$d=\sqrt{x}$. Note that if~$y\geq (\log x)^3$, say, the saddle-point relevant to the sum in~\eqref{sum-1surtau} is roughly of the same size as~$\alpha(x^2, y)$ (because~$1/\tau(p)=1/2$ for prime~$p$). The issue at hand when studying~$D(x, y ; \gamma)$ is precisely the estimation of such sums as the one in~\eqref{sum-1surtau}; in our case however, as will be apparent, the upper bound on~$n$ will be roughly of size~$x^{1/2+o(1)}$, and the relevant saddle-point will indeed be well-approximated, to some extent, by~$\alpha(x^{1+o(1)}, y)$.

\subsection{A truncated convolution and the two-variable saddle-point method}

We now sketch our proof of Theorem~\ref{thm-pcp}. Inverting summations yields
\[ D(x, y ; \gamma) = \frac{S}{\Psi(x, y)}, \]
where
\[ S := \underset{\substack{P(nd)\leq y \\ nd\leq x, \\d^{1/2}\geq n^{1/2}\e^\gamma}}{\sum\sum}\frac1{\tau(nd)}. \]
The obvious approach here consists in first approximating the sum over~$n$ by a ``nice'' function of~$d$, and then estimating the remaining sum over~$d$. This is the method followed~\textit{e.g.}~in~\cite{Basquin}. There, one relies on estimates for friable sums of multiplicative functions from~\cite{Smida}, which are a generalization of~\eqref{estim-hildebrand}. These however are still subject to the limitation~$(x, y)\in(H_\ee)$.

One could presumably follow the same strategy by using the estimate~\eqref{estim-psi-col} along with local estimates for the saddle-point. The need for uniformity in~$d$ for the estimation of the inner sum, however, is likely to produce significant technical complications due to the dependence of the summand on the multiplicative structure of~$d$. Here instead we study the double sum as a whole by applying the Perron formula twice, which yields
\begin{equation}\label{perron2} S = \frac1{(2\pi i)^2}\int_{\sigma-i\infty}^{\sigma+i\infty}\int_{\kappa-i\infty}^{\kappa+i\infty} x^{s} \e^{-\gamma w} F_y(s+w/2, s-w/2)\frac{\dd w}w\frac{\dd s}s, \qquad (2\sigma > \kappa>0), \end{equation}
provided~$x\not\in\bfN$ and~$\e^{2\gamma}\not\in\bfQ$. Here~$F_y(s, w)$ is the Dirichlet series relevant to our problem
\[ F_y(s, w) := \sum_{P(nd)\leq y}\frac1{\tau(nd)n^sd^w}, \qquad (\Re(s), \Re(w)>0), \]
and~$\gamma = v \cu$. One wishes to apply the saddle-point method for the double-integral in~\eqref{perron2}. A linear change of variables yields
$$ S = \frac2{(2\pi i)^2}\int_{\sigma-i\infty}^{\sigma+i\infty}\int_{\kappa+\sigma-i\infty}^{\kappa+\sigma+i\infty} x^{(s+w)/2} \e^{\gamma(w-s)}F_y(s, w) \frac{\dd w\dd s}{(s-w)(s+w)}. $$
The effect of the factor~$1/(s-w)$ cannot be fully neglected; although a direct analysis would likely be possible (as in~\cite[Corollary 2.2]{BT2002}), we circumvent this issue by truncating off values of~$s$ and~$w$ with large imaginary parts, and differentiating with respect to~$v$. Therefore, for some~$T>0$ of a suitable size and for some optimal choice of~$(\sigma, \kappa)$ (depending on~$x$, $y$ and~$\gamma$), one wishes to estimate
\[ \frac{2 \cu}{(2\pi i)^2}\int_{\sigma-iT}^{\sigma+iT}\int_{\kappa+\sigma-iT}^{\kappa+\sigma+iT} x^{(s+w)/2}\e^{\gamma(w-s)}F_y(s, w) \frac{\dd w\dd s}{s + w}. \]
The integrals there can be analyzed by the saddle-point method, which eventually yields the expected approximation~$\Psi(x, y)\e^{-v^2/2}/\sqrt{2\pi}$.

Finally, we note that very recently Robert and Tenenbaum~\cite{RT} used a variant of the two-variable saddle-point method to study the distribution of integers with small square-free kernel. Compared with theirs, our setting is simplified by the fact that the series~$F_y(s, w)$ is symmetric and to some extent comparable to~$\zeta(s, y)^{1/2}\zeta(w, y)^{1/2}$ (for the study of which we can use the work of Hildebrand and Tenenbaum~\cite{HT86}).

\subsection{Acknowledgments}

The author was supported by a CRM-ISM Postdoctoral fellowship. The author is grateful to Régis de la Bretèche, Andrew Granville and Gérald Tenenbaum for helpful comments on an earlier version of this manuscript.

\section{Preliminary remarks and notations}

We will keep throughout the notation
\[ s = \sigma+i\tau, \quad w=\kappa+it, \quad ((\sigma,\tau,\kappa,t)\in\bfR^4). \]
We write~$A\ll B$ or~$A=O(B)$ whenever~$A$ and~$B$ are expressions where~$B$ assumes non-negative values, and there exists a positive constant~$C$ such that~$|A|\leq CB$ uniformly. The constant~$C$ may depend on various parameters, which are then displayed in subscript (\textit{e.g.}~$A\ll_\ee B$ if the constant depends on~$\ee$). Moreover, the letters~$c_1, c_2, \ldots$ designate positive constants, which are tacitly assumed to be absolute, unless otherwise specified.

At various places in our arguments, functions such as~$z\mapsto 1/(\log z) - 1/(z-1)$ are involved, which are regular at some particular point of their domain of definition, where the explicit expression diverges (here~$z=1$). It will be implicit that one should consider the holomorphic extension at said point.

Finally, every instance of the complex logarithm function we consider is, unless otherwise specified, the principal determination defined on~$\CpR$. For all~$r>0$ and any function~$f$ defined on~$\CpR$, we denote~$f(-r+0i):=\lim_{\ee\to0+}f(-r+i\ee)$ and similarly~$f(-r-0i):=\lim_{\ee\to0+}f(-r-i\ee)$, whenever those limits exist.

\section{Saddle-point estimates for~$\zeta(s, y)$}\label{section-sigma}

For all~$k\in\bfN$,~$s\in\bfC$ with~$\Re(s)>0$ and~$y\geq 2$, we define
\[ \phi_0(s, y) := \log \zeta(s, y) = -\sum_{p\leq y}\log(1-p^{-s}), \quad \phi_k(s, y) := \frac{\partial^k\phi_0}{\partial s^k}(s, y), \quad \tphi_k(s, y) := \sum_{p\leq y}\frac{(\log p)^k}{(p^s-1)^k}, \]
\[ \sigma_k := \phi_k(\alpha, y), \qquad \tsigma_k := \tphi_k(\alpha, y). \]
Bear in mind that~$\sigma_k$ and~$\tsigma_k$ depend on~$x$ and~$y$, the values of which will be clear from the context. In particular, by the definition of~$\alpha$,
\begin{equation}\label{def-sigmas} \sigma_1 =-\log x, \qquad \sigma_2 = \sum_{p\leq y}\frac{(\log p)^2p^\alpha}{(p^\alpha-1)^2}, \qquad \tsigma_2 = \sum_{p\leq y}\frac{(\log p)^2}{(p^\alpha-1)^2}.\end{equation}

We quote the following useful estimates on~$\alpha(x, y)$ and~$\phi_k(\alpha, y)$ from Theorem~2 and Lemmas~2, 3 and~4 of~\cite{HT86}. They will be implicitly used thoughout our argument. Uniformly for~$2\leq y\leq x$, we have
\[ \sigma_k \asymp (u\log y)^k(\ubar)^{1-k}, \quad \alpha \asymp \frac{\ubar}{u\log y} \quad (y\ll \log x), \qquad \alpha \gg \frac1{\log y} \quad (y\gg\log x), \]
\[ (1-\alpha)\log y \ll \log\ubar, \qquad \sqrt{\ubar}\ll \alpha\sqrt{\sigma_2}\ll \min\{\sqrt{\ubar}\log y, \sqrt{y/\log y}\}. \]
We will also require the following two bounds, which are corollaries of the calculations of~\cite[page~281]{HT86}. We have
\begin{equation}
\label{HT-p281}
\begin{aligned}
\int_{(\ubar)^{2/3}/(\log x)}^\infty\Big(1+\frac{\tau^2\sigma_2}{y/(\log y)}\Big)^{-cy/(\log y)}\dd\tau&\ \ll \frac1{\ubar\sqrt{\sigma_2}}, \\ \int_{0}^\infty\Big(1+\frac{t^2\sigma_2}{y/(\log y)}\Big)^{-cy/(\log y)}\dd t&\ \ll \frac1{\sqrt{\sigma_2}}.
\end{aligned}
\end{equation}
Regarding~$\tphi_k(x, y)$, using prime number sums in the same way as~\cite[Lemma~4 and~13]{HT86}, we deduce that
\[ \tphi_k(\sigma, y) \ll_k |\phi_k(\sigma, y)| \qquad (k\geq 2,\ \sigma>0,\ y\geq 2). \]
Note that we trivially have~$\tsigma_2\leq \sigma_2$. The next lemma relates more precisely the two quantities.
\begin{lemme}
As~$y, u\to\infty$,
\begin{equation}\label{estim-tphi2}
\frac{\tsigma_2}{\sigma_2} = \frac1{1+(y/\log x)} + o(1).
\end{equation}
\end{lemme}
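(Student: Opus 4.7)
My approach begins with the algebraic identity
\[ \sigma_2 - \tsigma_2 = \sum_{p \leq y} \frac{(\log p)^2(p^\alpha - 1)}{(p^\alpha - 1)^2} = \sum_{p \leq y} \frac{(\log p)^2}{p^\alpha - 1}, \]
obtained by substituting $p^\alpha = 1 + (p^\alpha - 1)$ in the numerator of $\sigma_2$. Given the asymptotic $\sigma_2 \sim (1 + \log x/y) \log x \log y$, which is~\eqref{estim-phi2} evaluated at $s = \alpha$, establishing~\eqref{estim-tphi2} amounts to proving
\[ \sum_{p \leq y} \frac{(\log p)^2}{p^\alpha - 1} = (1+o(1))\log x \log y \qquad (y, u \to \infty), \]
because then $(\sigma_2 - \tsigma_2)/\sigma_2 = y/(y + \log x) + o(1)$, and subtracting from~$1$ yields the claim.

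To evaluate the sum, I apply partial summation with $\vartheta(t) = \sum_{p \leq t} \log p$, reducing via the PNT $\vartheta(t) = t + O(t\exp(-c\sqrt{\log t}))$ to the main integral
\[ I := \int_2^y \frac{\log t}{t^\alpha - 1}\,dt; \]
the contribution of $d(\vartheta(t) - t)$, once one integrates by parts and uses a crude size estimate for $\tfrac{d}{dt}[\log t/(t^\alpha - 1)]$, is $o(\log x \log y)$. A second integration by parts, with $F(t) := \int_2^t du/(u^\alpha-1)$, then gives
\[ I = F(y) \log y - \int_2^y \frac{F(t)}{t}\,dt. \]
By the defining equation~\eqref{def-alpha} combined with PNT, $F(y) = \log x + o(\log x)$, so the first summand supplies the desired main term $(1+o(1))\log x \log y$.

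The final step is to show $\int_2^y F(t)/t \, dt = o(\log x \log y)$. I plan to do this by direct estimates on $F(t)$. In the regime $\alpha \log y \gg 1$ (i.e.\ $y^\alpha \to \infty$), expanding $(u^\alpha - 1)^{-1} = \sum_{k \geq 1} u^{-k\alpha}$ and integrating termwise yields the bound $F(t) \ll t^{1-\alpha}/(1-\alpha) + O(1)$ uniformly in $t \leq y$ (the $k \geq 2$ tail being geometrically smaller than the $k = 1$ term), so that
\[ \int_2^y \frac{F(t)}{t}\,dt \ll \frac{F(y)}{1-\alpha} \ll \frac{\log x}{1-\alpha} ; \]
in the complementary regime $\alpha \log y = o(1)$ (which forces $y \ll \log x$), the approximation $(u^\alpha - 1)^{-1} \sim (\alpha \log u)^{-1}$ gives $F(t) \sim \li(t)/\alpha$ and hence $\int_2^y F(t)/t\,dt \sim \log x$. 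In either regime the estimate~\eqref{estim-alpha} provides $(1-\alpha)\log y = \log(y\log x/(y + \log x)) + o(\log y)$, and the hypothesis $y, u \to \infty$ (together with $\log x = u\log y$) forces $\min(y, \log x) \to \infty$ and hence $(1-\alpha)\log y \to \infty$; thus $\log x/(1-\alpha) = o(\log x \log y)$. The chief technical nuisance will be bookkeeping uniformly across the various ranges of $\alpha \log y$ and ensuring the PNT remainder does not spoil the `$o$'; no ingredient beyond the techniques of Lemmas~2--4 of~\cite{HT86} is required.
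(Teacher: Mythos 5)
Your overall strategy is genuinely different from the paper's proof and is viable: the paper compares $\tsigma_2$ and $\sigma_2$ directly, regime by regime in terms of $y/\log x$ (using Lemmas~3 and~13 of~\cite{HT86} and a final $\ee\to 0$ argument), whereas you evaluate the difference $\sigma_2-\tsigma_2=\sum_{p\leq y}(\log p)^2/(p^\alpha-1)$ by partial summation against the defining equation~\eqref{def-alpha} and quote~\eqref{estim-phi2} for $\sigma_2$ itself. The identity for $\sigma_2-\tsigma_2$, the reduction to $\sum_{p\leq y}(\log p)^2/(p^\alpha-1)=(1+o(1))\log x\log y$, the main term $F(y)\log y$ with $F(y)\sim\log x$, the target $\int_2^y F(t)\,\dd t/t=o(\log x\log y)$, and the small-$\alpha$ regime are all correct. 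Two steps, however, do not hold up as written.

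First, the bound $F(t)\ll t^{1-\alpha}/(1-\alpha)+O(1)$ is not uniform in $2\leq t\leq y$ when $\alpha\asymp 1/\log y$ (which is compatible with $y^\alpha$ bounded below): for $t\leq \e^{1/\alpha}$ one has $u^\alpha-1\asymp\alpha\log u$ on $[2,t]$, hence $F(t)\asymp t/(\alpha\log t)$, exceeding your bound by a factor of order $1/(\alpha\log t)$, which can be as large as $\log y$. The conclusion survives, but only after treating this range separately: the contribution of $t\leq\e^{1/\alpha}$ to $\int_2^y F(t)\,\dd t/t$ is $\ll\int_2^{\e^{1/\alpha}}\dd t/(\alpha\log t)\ll F(\e^{1/\alpha})\leq F(y)\ll\log x=o(\log x\log y)$, and the geometric-series argument is then legitimate for $t\geq\e^{1/\alpha}$. (The case $\alpha\log y\asymp 1$, which your dichotomy omits, falls under the small-$\alpha$ analysis, since then $u^\alpha-1\asymp\alpha\log u$ on all of $[2,y]$.) Second, your derivation of $(1-\alpha)\log y\to\infty$ from~\eqref{estim-alpha} is insufficient: the absolute error that~\eqref{estim-alpha} permits in $(1-\alpha)\log y$ is of size $O(\log\log y)$, while the main term $\log\big(y\log x/(y+\log x)\big)$ is itself only of order $\log\log y$ when, say, $u\asymp(\log\log y)^A$; in that range~\eqref{estim-alpha} does not even determine the sign, let alone divergence. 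The fact is true, and can be obtained either from the sharper expansions of $\alpha$ in~\cite{HT86}, or directly from~\eqref{def-alpha}: if $y^{1-\alpha}\leq\e^K$ with $\log y\geq 2K$, then $\alpha\geq 1/2$, whence $\log x\ll\sum_{p\leq y}(\log p)p^{-\alpha}\ll y^{1-\alpha}\log y\ll \e^K\log y$, i.e.\ $u\ll\e^K$, contradicting $u\to\infty$. With these two repairs your argument goes through, and it is arguably more economical than the paper's case analysis because the defining equation of $\alpha$ furnishes the normalization $F(y)\sim\log x$ at no cost.
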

\begin{proof}
When~$\alpha\geq 0.6$, we certainly have~$y/(\log x)\to\infty$ as well as~$\tsigma_2 = O(1)$ and~$\sigma_2\to\infty$, so that the desired estimate holds. We may thus assume that~$\alpha<0.6$.

Let~$\ee\in(0, 1/10]$. By~\cite[Lemma~3]{HT86}, we have~$\phi_2(\alpha, y) \asymp y^{1-\alpha}\log y$ whenever~$1/(\log y) \ll \alpha \leq 0.6$. The same conditions are satisfied when one replaces~$y$ by~$y^{1/2}$; we deduce
\begin{equation}\label{majo-phi2-tail} \phi_2(\alpha, y^{1/2}) \asymp y^{-(1-\alpha)/2} \phi_2(\alpha, y) \leq y^{-\alpha/3}\phi_2(\alpha, y). \end{equation}
Suppose first that~$y\geq (1/\ee)\log x$. Then~$\log(1/\ee)/\log y \ll \alpha < 0.6$, and we have
\[ \tphi_2(\alpha, y) = \sum_{p\leq y^{1/2}}\frac{(\log p)^2}{(p^\alpha-1)^2} + \sum_{y^{1/2}<p\leq y}\frac{(\log p)^2}{(p^\alpha-1)^2} \leq \phi_2(\alpha, y^{1/2}) + y^{-\alpha/2}\phi_2(\alpha, y) \ll \ee^c\phi_2(\alpha, y) \]
for some absolute constant~$c>0$, because of our assumption on~$\alpha$.

Assume next that~$y\leq \ee\log x$. Then~$\alpha\ll \ee/\log y$ and~$p^\alpha = 1+O(\ee)$ uniformly for~$p\leq y$, so that
\[ \tphi_2(\alpha, y) = \{1+O(\ee)\}\phi_2(\alpha, y). \]

Finally assume that~$y=t\log x$ where~$t$ varies inside~$(\ee, 1/\ee)$ and let~$\ubar\to\infty$. Then we have~$\alpha\sim\log(1+t)/\log y$, so that~$y^\alpha\sim_\ee(1+t)$ (the decay of the implied~$o(1)$ there may depend on~$\ee$). Evaluating the sum over primes defining~$\phi_2(\alpha, y)$ using~\cite[Lemma~13]{HT86}, we have
\[ \phi_2(\alpha, y) = \frac{1+o(1)}{(1-y^{-\alpha})^2}\int_2^y\frac{(\log z)\dd z}{z^\alpha} +O(1)\sim_\ee \frac{y^{1-\alpha}\log y}{(1-y^{-\alpha})^2} \sim_\ee (t^{-1}+t^{-2})y\log y. \]
The same set of calculations show that, on the other hand,
\[ \tphi_2(\alpha, y) = \frac{1+o(1)}{(1-y^{-\alpha})^2}\int_2^y\frac{(\log z)\dd z}{z^{2\alpha}} + O(1) \sim_\ee \frac{y^{1-2\alpha}\log y}{(1-y^{-\alpha})^2} \sim_\ee t^{-2}y\log y. \]
We deduce~$\tphi_2(\alpha, y) \sim_\ee (1+t)^{-1}\phi_2(\alpha, y)$.

Grouping our estimates, we have in any case
\[ \limsup_{\ubar\to\infty}\Big(\frac{\tsigma_2}{\sigma_2}-\frac1{1+(y/\log x)}\Big) \ll \ee^c \]
for some absolute~$c>0$ and all~$\ee>0$, and we conclude by letting~$\ee\to0$.
\end{proof}

Having the above facts at hand, we let~$\cu = \cu(x, y)$ be defined for~$2\leq y\leq x$ by
\begin{equation}
  \label{def-cu}
  \cu := \tfrac12(\sigma_2-\tsigma_2/3)^{1/2} \asymp (\log x)/\sqrt{\ubar}.
\end{equation}
As~$\ubar\to\infty$, we therefore have
\[ \cu^2 \sim (\log x) \log y \big(\frac14 + \frac{\log x}{6y}\big). \]

\section{Lemmas}

The following lemma is a truncated Perron formula suited for sparse sequences, \textit{cf.}~\cite[Exercices II.2.2 and II.2.3]{Tene2007}. Let
\[ K(\tau) := \max\{0, 1-|\tau|\} \qquad (\tau\in\bfR). \]
\begin{lemme}\label{lemme-perron}
Let~$(a_n)$ be any sequence of complex numbers, and assume that the series
\[ F(s) := \sum_{n\geq 1}\frac{a_n}{n^s} \]
is absolutely convergent on the half-plane~$\Re(s)>\sigma_0$ for some~$\sigma_0>0$. For all such~$s$, let~$F_0(s) := \sum_{n\geq 1}|a_n|n^{-s}$.
Then for all~$x\geq 2$, $\sigma>\sigma_0$ and~$T\geq 2$, we have
\[ \sum_{n\leq x}a_n = \frac1{2\pi i}\int_{\sigma-iT}^{\sigma+iT}F(s)\frac{x^s\dd s}s + O\Big(\frac{x^\sigma}{\sqrt{T}}\Big\{F_0(\sigma) + \int_{-\sqrt{T}}^{\sqrt{T}}x^{i\tau}F_0(\sigma+i\tau)K(\tau/\sqrt{T})\dd\tau\Big\}\Big) .\]
\end{lemme}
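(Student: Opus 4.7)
This lemma is the classical Fej\'er-smoothed truncated Perron formula, available as Exercise~II.2.3 in the cited reference. My plan is to follow the standard argument, which hinges on replacing the sharp cutoff at height $T$ with a Ces\`aro-type average on a window of length $\sqrt{T}$ --- the scale $\sqrt{T}$ arising naturally from balancing the ``near-diagonal'' error (terms with $n$ close to $x$) against the ``tail'' error (from truncating the Perron integral).

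First I would start from the pointwise Mellin inversion
$$\mathbf{1}_{n < x} = \frac{1}{2\pi i}\int_{\sigma - i\infty}^{\sigma + i\infty}\left(\frac{x}{n}\right)^s\frac{\dd s}{s} \qquad (n \neq x,\ \sigma > 0),$$
interpreted in the principal-value sense. Multiplying by $a_n$ and summing, the absolute convergence on $\Re s > \sigma_0$ justifies exchanging sum and integral, giving the infinite-height formula. The truncation remainder is then
$$\sum_n a_n\left(\mathbf{1}_{n < x} - \frac{1}{2\pi i}\int_{\sigma - iT}^{\sigma + iT}\left(\frac{x}{n}\right)^s\frac{\dd s}{s}\right);$$
integration by parts on each tail integral $|\tau| > T$ re-expresses the summand as a Fourier-type integral in $\tau = \Im s$, evaluated at $\log(x/n)$.

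Next I would smooth the sharp tail cutoff by Fej\'er averaging: replace the sharp truncation at height $T$ by a linear Ces\`aro average over a window of length $\sqrt{T}$, which produces the triangular weight $K(\tau/\sqrt T)$ naturally (via the identity $\int_{-\sqrt T}^{\sqrt T} K(\tau/\sqrt T) e^{i\tau u}\,\dd\tau = 4\sin^2(u\sqrt T/2)/u^2$, a positive Fej\'er kernel concentrated at $|u|\leq 1/\sqrt T$). Splitting the sum over $n$ according to whether $|\log(x/n)|$ is below or above $1/\sqrt T$, the near-diagonal part is controlled crudely by $x^\sigma F_0(\sigma)/\sqrt T$, using $(x/n)^\sigma\asymp 1$ on this range together with $\int K(\tau/\sqrt T)\,\dd\tau=\sqrt T$; the complementary part translates, via Plancherel and the identity $\sum_n |a_n| n^{-\sigma - i\tau} = F_0(\sigma + i\tau)$, into the integral term $(x^\sigma/\sqrt T)\int_{-\sqrt T}^{\sqrt T} x^{i\tau} F_0(\sigma + i\tau) K(\tau/\sqrt T)\,\dd\tau$ appearing in the claimed error.

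The only real obstacle is bookkeeping: tracking absolute constants through the Fej\'er smoothing and rigorously justifying each interchange via the absolute convergence assumption on $\Re s>\sigma_0$. Since the lemma is quoted directly from exercises in Tenenbaum's textbook, the plan here amounts to reproducing that classical computation, with no real novelty.
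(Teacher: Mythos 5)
Your plan contains the right ingredients (the sharp-cutoff Perron kernel bound and a Fej\'er kernel at scale $\sqrt T$), and the paper's own proof is indeed the classical computation you cite; but the way you assemble the two error terms is crossed, and one of your steps is false as stated. The paper starts from
$\frac1{2\pi i}\int_{\sigma-iT}^{\sigma+iT}z^s\,\dd s/s=\mathbf{1}_{z\geq1}+O\big(z^\sigma\min\{1,(T|\log z|)^{-1}\}\big)$
and splits according to $|\log z|\gtrless T^{-1/2}$ with $z=x/n$: for the \emph{far} terms $|\log(x/n)|>T^{-1/2}$ one has $(T|\log(x/n)|)^{-1}\leq T^{-1/2}$, and summing gives the term $x^\sigma F_0(\sigma)/\sqrt T$; for the \emph{near-diagonal} terms one majorizes the indicator by the Fej\'er kernel,
$\mathbf{1}_{|\log z|\leq T^{-1/2}}\ll\big(\tfrac{\sin(\sqrt T(\log z)/2)}{\sqrt T(\log z)/2}\big)^2=\tfrac1{\sqrt T}\int_{-\sqrt T}^{\sqrt T}z^{i\tau}K(\tau/\sqrt T)\,\dd\tau$,
and summing over $n$ (positivity plus Fubini, no Plancherel needed) produces the integral term. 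You have these roles swapped: you claim the near-diagonal part is ``controlled crudely by $x^\sigma F_0(\sigma)/\sqrt T$'', but that is false in general --- take $a_n$ supported on a single $n_0$ with $|\log(x/n_0)|\leq T^{-1/2}$; the near-diagonal sum is then $\asymp x^\sigma F_0(\sigma)$, with no $1/\sqrt T$ saving. The whole reason the lemma carries the second, integral-shaped error term (and is ``suited for sparse sequences'') is that no such crude gain exists near $n\approx x$; the integral term \emph{is} the near-diagonal contribution, while the $F_0(\sigma)/\sqrt T$ term comes from the far range.

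Two smaller points. Your Fej\'er identity should read $\int_{-\sqrt T}^{\sqrt T}K(\tau/\sqrt T)\e^{i\tau u}\,\dd\tau=\sqrt T\big(\tfrac{\sin(u\sqrt T/2)}{u\sqrt T/2}\big)^2=4\sin^2(u\sqrt T/2)/(\sqrt T\,u^2)$, so your version is off by a factor $\sqrt T$. Also, the weight $K(\tau/\sqrt T)$ is supported on $|\tau|\leq\sqrt T$ around $\tau=0$, so it does not arise from Ces\`aro-averaging the truncation height $T$ (that would put a weight near $|\tau|\approx T$); it enters only as the Fourier-side expression of the majorant of $\mathbf{1}_{|\log(x/n)|\leq T^{-1/2}}$. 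Once you exchange the roles of the two ranges as above, your ingredients reassemble exactly into the paper's proof.
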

\begin{remarque} The integral is the error term is a non-negative real number, as is apparent from the proof. \end{remarque}
\begin{proof}
The estimate follows classically from the formula, valid for all~$z>0$,
\begin{equation}\label{perron-z}
\frac1{2\pi i}\int_{\sigma-iT}^{\sigma+iT}\frac{z^s\dd s}s = \bfUn_{z\geq1} + O\big(z^\sigma\min\{1, (T|\log z|)^{-1}\}\big) \ll z^\sigma.
\end{equation}
Indeed the error term is~$O\big(z^\sigma\{T^{-1/2}+\bfUn_{|\log z|\leq T^{-1/2}}\}\big)$, and we have
\begin{equation}
\bfUn_{|\log z|\leq T^{-1/2}} \ll \Big(\frac{\sin(\sqrt{T}(\log z)/2)}{\sqrt{T}(\log z)/2}\Big)^2 = \int_{-1}^1 z^{i\tau\sqrt{T}}K(\tau)\dd \tau.\label{majo-perron-K}
\end{equation}
We then specialize at~$z=x/n$ and sum over~$n$ against the coefficients~$a_n$.
\end{proof}

\subsection{Basic properties of~$F_y(s, w)$}

Let
\[ \cH := \{s \in \bfC :\ \Re(s)>0\}, \qquad \cU := \{\bz\in\bfC : |\bz|<1\}. \]
For all~$(s, w)\in\cH^2$, we write
\[ F_y(s, w) := \sum_{P(nd)\leq y}\frac1{\tau(nd)n^sd^w}. \]
Note that we have the Euler product expansion
\[ F_y(s, w) = \prod_{p\leq y}\Big(\sum_{k,\ell \geq 0}\frac{p^{-ks - \ell w}}{k+\ell+1}\Big) = \prod_{p\leq y}\Big(\frac{\log(1-p^{-s})-\log(1-p^{-w})}{p^{-w}-p^{-s}}\Big) .\]
In what follows, the letters~$\bx$,~$\by$ and~$\bz$ shall denote complex numbers.

Whenever~$\bz\in\CpR$, taking principal determinations of the logarithms, we have
\[ \Re\Big(\frac{\bz^{1/2}-\bz^{-1/2}}{\log \bz}\Big) > 0 \]
where the fraction is analytically extended with value~$1$ at~$z=1$. It follows that the function
\begin{equation}\label{definition-g} g(\bz) := \log\Big(\frac{\bz^{1/2}-\bz^{-1/2}}{\log \bz}\Big) \end{equation}
is a well-defined analytic function of~$\bz\in\CpR$. Since we have~$(1-\bx)/(1-\by)\in\CpR$ for all~$(\bx, \by)\in\cU^2$, it follows that the function
\begin{equation}\label{definition-h} \Xi(\bx, \by) := -\frac12\log(1-\bx)-\frac12\log(1-\by) - g\Big(\frac{1-\bx}{1-\by}\Big) \end{equation}
is an analytic function of~$(\bx, \by)\in\cU^2$. When~$\bx, \by\in(-1, 1)$, we have
\[ \exp\{\Xi(\bx, \by)\} = \frac{\log(1-\bx)-\log(1-\by)}{\by-\bx}. \]
This identity therefore holds on~$\cU^2$ by analytic continuation. Putting
\begin{equation}\label{def-fy} f_y(s, w) := \sum_{p\leq y}\Xi(p^{-s}, p^{-w}) ,\end{equation}
we obtain that~$f_y(s, w)$ is an analytic function of~$(s, w)\in\cH^2$, and
\[ F_y(s, w) = \exp\{f_y(s, w)\}. \]

For any~$(k,\ell)\in\bfN^2$ and function~$f(\bx, \by)$ of class~$\cC^{k+\ell}$, we shall use the notation
\[ \partial_{k\ell}f := \frac{\partial^{k+\ell}f}{\partial \bx^k\partial \by^\ell} .\]
The hessian will play an important role: for a class~$\cC^2$ function~$f$ of two variables, we denote
\[ \hess[f] := (\partial_{20}f)(\partial_{02}f) - (\partial_{11}f)^2 .\]

In the rest of the paper,~$\Xi(\bx, \by)$ will always denote the function defined by equations~\eqref{definition-h} and~\eqref{definition-g} in the proof of the previous lemma. The next lemma regroups some useful facts concerning the power series expansion of~$\Xi(\bx, \by)$.
\begin{lemme}\label{props-h}
\begin{enumerate}[(i)]
\item For some sequence of \emph{positive} coefficients~$(d_{k,\ell})_{k+\ell\geq 1}$ with~$d_{1,0}=d_{0,1}=1/2$, the power series expansion of~$\Xi(\bx, \by)$ at~$(0,0)$ is
\begin{equation}\label{dvpt-h} \Xi(\bx, \by) = \sum_{k+\ell\geq 1}d_{k, \ell}\bx^k\by^\ell \qquad ((\bx, \by)\in\cU^2). \end{equation}
\item For some analytic function~$\xi(\bx, \by)$ of~$(\bx, \by)\in\cU^2$, we may write
\begin{equation}\label{def-xi} g\Big(\frac{1-\bx}{1-\by}\Big) = (\bx-\by)^2\xi(\bx, \by) \qquad ((\bx, \by)\in\cU^2) .\end{equation}
\item For some sequences~$(d'_{k,\ell})_{k, \ell\geq 0}$ and~$(d''_{k,\ell})_{k, \ell\geq 0}$ of positive numbers with~$d''_{0,0}=d'_{0,0}=1/24$, we have
\begin{equation}\label{dvpt-xi} \xi(\bx, \by) = \sum_{k, \ell\geq 0} d'_{k, \ell}\bx^k\by^\ell \qquad ((\bx, \by)\in\cU^2) ,\end{equation}
\begin{equation}\label{expr-ders-xi} \partial_{k\ell}\xi(\bx, \bx) = \frac{d''_{k, \ell}}{(1-\bx)^{k+\ell+2}} \qquad (\bx\in\cU) .\end{equation}
\item For all~$(\bx, \by)\in(0, 1)$, we have
\begin{equation}\label{pos-hessienne-h} [(\partial_{20}\Xi)(\partial_{02}\Xi) - (\partial_{11}\Xi)^2](\bx, \by) > 0.\end{equation}
\end{enumerate}
\end{lemme}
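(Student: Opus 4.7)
The proof hinges on the identity
\[ \exp(\Xi(\bx, \by)) = \sum_{k, \ell \geq 0} \frac{\bx^k \by^\ell}{k+\ell+1} = \int_0^1 \frac{\dd t}{(1-\bx t)(1-\by t)}, \]
obtained from the Euler product definition of $F_y$ (equivalently, by integrating the geometric series $(1-\bx t)^{-1}(1-\by t)^{-1}$ term by term). The series expansion on the left handles the combinatorial parts (i)--(iii), and the integral representation on the right drives the hessian bound (iv).

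For (i), writing $G := e^\Xi$, the linear coefficients $d_{1,0} = d_{0,1} = 1/2$ follow from $\partial_\bx G(0,0) = 1/2 = G(0,0)/2$. For positivity of all $d_{k,\ell}$ with $k+\ell \geq 1$, I would use the relation $\partial_\bx G = G \cdot \partial_\bx \Xi$, noting that both $G$ and $\partial_\bx G$ are power series with strictly positive coefficients at $(0, 0)$. Unravelling this convolution identity degree-by-degree yields the positivity of each coefficient of $\partial_\bx \Xi$, hence of $\Xi$ itself.

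For (ii), observe that $g(e^u) = \log(\sinh(u/2)/(u/2)) = u^2/24 + O(u^4)$, so $g$ vanishes to order $2$ at $z = 1$. Since $(1-\bx)/(1-\by) - 1 = (\by - \bx)/(1-\by)$, we obtain $g((1-\bx)/(1-\by)) = (\bx-\by)^2/(24(1-\by)^2) + O((\bx-\by)^3)$ near the diagonal, and hence $\xi := g((1-\bx)/(1-\by))/(\bx - \by)^2$ extends analytically across $\bx = \by$ by Riemann's removable singularity theorem. For (iii), the substitution $z = (1-\bx)/(1-\by)$ turns $\xi$ into $G^*(z)/(1-\by)^2$, where $G^*(z) := g(z)/(z-1)^2$ is analytic at $z = 1$ with $G^*(1) = 1/24$; in particular $\xi(\bx, \bx) = 1/(24(1-\bx)^2)$. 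Repeated differentiation via $\partial_\bx z = -1/(1-\by)$ and $\partial_\by z = z/(1-\by)$, evaluated at $z = 1$, yields the scaling \eqref{expr-ders-xi} with the $d''_{k,\ell}$ being polynomial expressions in the Taylor coefficients of $G^*$ at $1$; their positivity follows from the alternating-sign expansion $g(1+w) = w^2/24 - w^3/24 + (109/2880)w^4 - \ldots$, itself inherited from the alternating-sign pattern in $h(u) = \sum_{m \geq 1}(-1)^{m+1}\beta_m u^{2m}$ with $\beta_m > 0$. The positivity of the Taylor coefficients $d'_{k,\ell}$ of $\xi$ at the origin is then deduced from the identity
\[ (\bx - \by)^2\xi(\bx, \by) = -\Xi(\bx, \by) - \tfrac12\log(1-\bx) - \tfrac12\log(1-\by), \]
combined with part (i) and an induction on $(k, \ell)$.

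For (iv), fix real $\bx, \by \in (0, 1)$ so that $\mu(\dd t) := \dd t/[G(\bx, \by)(1-\bx t)(1-\by t)]$ defines a probability measure on $(0, 1)$. Setting $U := t/(1-\bx t)$ and $V := t/(1-\by t)$ (both strictly positive under $\mu$), direct differentiation of $\log G$ gives
\[ \partial_{20}\Xi = 2\mathrm{Var}(U) + E[U]^2,\quad \partial_{02}\Xi = 2\mathrm{Var}(V) + E[V]^2,\quad \partial_{11}\Xi = \mathrm{Cov}(U, V). \]
The hessian is therefore
\[ (2\mathrm{Var}(U) + E[U]^2)(2\mathrm{Var}(V) + E[V]^2) - \mathrm{Cov}(U, V)^2 \geq E[U]^2 E[V]^2 + 3\mathrm{Var}(U)\mathrm{Var}(V) > 0, \]
where the inequality uses the Cauchy--Schwarz bound $\mathrm{Cov}(U, V)^2 \leq \mathrm{Var}(U)\mathrm{Var}(V)$ together with $E[U], E[V] > 0$.

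\textbf{The main obstacle} is the sign bookkeeping in parts (i) and (iii): the recurrences and identities produce the coefficients readily enough, but organizing the inductions so that positivity remains transparent throughout is delicate, particularly in (iii) where one must juggle positivity both for the Taylor coefficients at $(0, 0)$ (via the $d'_{k,\ell}$) and for the values along the diagonal (via the $d''_{k,\ell}$).
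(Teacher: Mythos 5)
Your parts (ii) and (iv) are sound: (ii) is the same removable-singularity/homogeneity observation that underlies the paper, and your probabilistic proof of (iv) -- writing $\partial_{20}\Xi = 2\mathrm{Var}(U)+E[U]^2$, $\partial_{11}\Xi=\mathrm{Cov}(U,V)$ for the probability measure $\dd t/[G(\bx,\by)(1-\bx t)(1-\by t)]$ on $(0,1)$ and applying Cauchy--Schwarz -- is a clean alternative to the paper's explicit closed-form computation of the hessian. The genuine gaps are precisely the positivity claims of (i) and (iii), which are the substance of the lemma. In (i), the inference ``$G$ and $\partial_\bx G$ have positive coefficients, so unravelling $\partial_\bx G = G\,\partial_\bx\Xi$ degree by degree gives positive coefficients for $\partial_\bx\Xi$'' is not valid: solving that convolution for the coefficients of $\partial_\bx\Xi=\partial_\bx G/G$ involves subtractions (equivalently, $1/G$ has coefficients of both signs), and in general the logarithm of a series with positive coefficients need not have positive coefficients -- $\log(1+x+10x^2)$ already has a negative $x^3$-coefficient. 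Some extra input is indispensable; the paper gets it from the Hankel-contour representation $g\big(\tfrac{1-\bx}{1-\by}\big)=(\bx-\by)\int_0^1\cK(t)\,\dd t/(1-(t\bx+(1-t)\by))$ with $\cK$ increasing, and an integration by parts which forces the mixed coefficients of $g$ to be negative, hence those of $\Xi$ positive.

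In (iii) the same difficulty recurs twice. For the $d''_{k,\ell}$ you reduce correctly to the Taylor coefficients of $g$ at $z=1$, but the claimed alternation of $g(1+w)=w^2/24-w^3/24+\cdots$ to all orders is not proved: it does not simply ``inherit'' from $h(u)=\sum_m(-1)^{m+1}\beta_m u^{2m}$, because after substituting $u=\log(1+w)$ the terms with even and odd $m$ contribute to each $w^n$-coefficient with opposite signs. Moreover, even granting alternation, the chain rule yields signed combinations such as $d''_{1,1}=3e_3-2e_4$ (writing $g(1+w)=\sum_{j\geq2}(-1)^je_jw^j$), whose positivity requires quantitative inequalities between the $e_j$, not just their signs. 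Likewise, recovering the $d'_{k,\ell}$ from $(\bx-\by)^2\xi=-\Xi-\tfrac12\log(1-\bx)-\tfrac12\log(1-\by)$ by induction means inverting $a_{k,\ell}=d'_{k-2,\ell}-2d'_{k-1,\ell-1}+d'_{k,\ell-2}$, again a signed recursion that does not transmit positivity. The paper settles all of this in one stroke: using $\cK(1-t)=-\cK(t)$ it symmetrizes the representation into $\xi(\bx,\by)=\int_0^1(t-\tfrac12)\cK(t)\,\dd t/[(1-(t\by+(1-t)\bx))(1-(t\bx+(1-t)\by))]$, whose integrand is nonnegative, and both \eqref{dvpt-xi} and \eqref{expr-ders-xi} with positive $d'_{k,\ell}$, $d''_{k,\ell}$ follow by expanding or differentiating under the integral sign. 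Your closing remark that the ``sign bookkeeping'' is the main obstacle is accurate: that bookkeeping is exactly what your proposal leaves unproved, and it is not a routine verification but the point where a new idea (the positive kernel) enters.
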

The useful feature in points~(i) and~(iii) is the positivity of the coefficients, which will provide a neat way to establish bounds on~$F_y(s, w)$.
\begin{proof}
Recall that the function~$g$ is defined by~\eqref{definition-g}. Note that~$g(\bz) = O(\log(|\bz|+|\bz|^{-1}))$ uniformly for~$\bz\in\CpR$, and~$g(1)=0$. Thus, whenever~$\bz\not\in\bfR_-$ and~$\Gamma$ is an oriented circle inside~$\CpR$ circling around~$\bz$ counter-clockwise, the Cauchy formula yields
\[ g(\bz) = \frac{\bz-1}{2\pi i}\oint_{\Gamma}\frac{g(w)\dd w}{(w-\bz)(w-1)} = \frac{\bz-1}{2\pi i}\int_{-\infty}^0 \frac{g(w+0i)-g(w-0i)}{(w-\bz)(w-1)}\dd w, \]
where the last equality follows from modifying the contour of integration into a Hankel contour,  first from~$-\infty$ to~$0$ with argument~$\pi$, then from~$0$ to~$-\infty$ with argument~$-\pi$.
Setting~$t=1/(1-w)$, we obtain
\[ g(\bz) = (1-\bz)\int_0^1 \frac{\cK(t)\dd t}{1-t(1-\bz)}, \qquad\text{where } \cK(t) = \frac1\pi\arctan\Big(\frac1\pi\log\Big(\frac{t}{1-t}\Big)\Big) \quad (t\in(0, 1)) \]
which we extend by continuity at~$t=0$ and~$1$. Letting~$\bz=(1-\bx)/(1-\by)$, we deduce that
\begin{equation}\label{eq-g-integrale} g\Big(\frac{1-\bx}{1-\by}\Big) = (\bx-\by)\int_0^1\frac{\cK(t)\dd t}{1-(t\bx+(1-t)\by)} .\end{equation}
Note that the function~$\cK$ is differentiable in~$(0, 1)$ and
\[ \cK'(t) = \frac{1}{t(1-t)\big\{\pi^2 + \log(t/(1-t))^2\big\}} > 0 \qquad (0<t<1). \]
Expanding the rational fraction in the RHS of~\eqref{eq-g-integrale} as a power series, and taking into account the factor~$(\by-\bx)$, we obtain for some coefficients~$(\tilde{d}_j)_{j\geq 0}$ the expression
\begin{equation}\label{dvpt-g-pos}\begin{aligned} \ &g\Big(\frac{1-\bx}{1-\by}\Big) \\=\ & \sum_{j\geq 0}\tilde{d}_j\{\bx^j+\by^j\} +   \int_0^1 \cK(t)\sum_{k, \ell\geq 1}\bx^k\by^\ell{\ell+k\choose k}\frac1{k+\ell}\Big\{kt^{k-1}(1-t)^{\ell} - \ell t^{k}(1-t)^{\ell-1}\Big\}\dd t \\
=\ & \sum_{j\geq 0}\tilde{d}_j\{\bx^j+\by^j\} - \sum_{k, \ell\geq 1}\bx^k\by^\ell {k+\ell\choose k}\frac1{k+\ell}\int_0^1 \cK'(t)t^k(1-t)^\ell\dd t \end{aligned}\end{equation}
by an integration by parts. The point here is that the coefficients of terms~$\bx^k\by^\ell$ with positive exponents are negative. We return now to~$\Xi(\bx, \by)$. Setting~$\by=0$, we have
\[ \Xi(\bx, 0) = \log\Big(-\frac{\log(1-\bx)}{\bx}\Big) \quad (\bx\in\cU). \]
By considering the derivative of this expression, it is easily obtained that the coefficients~$(d_{j,0})_{j\geq 1}$ in the expansion~$\Xi(\bx, 0) = \sum_{j\geq 1}d_{j,0}\bx^{j}$ are positive, and~$d_{1,0}=1/2$. Using this expansion, the expression~\eqref{dvpt-g-pos} for~$g$ and equation~\eqref{definition-h} (as well as the symmetry between~$\bx$ and~$\by$), we finally get
\[ \Xi(\bx, \by) = \sum_{j\geq 1}d_{j,0}\big(\bx^{j}+\by^{j}) + \sum_{k, \ell\geq 1}\bx^k\by^\ell {k+\ell\choose k}\frac1{k+\ell}\int_0^1 \cK'(t)t^k(1-t)^\ell\dd t = \sum_{k+\ell\geq 1}d_{k, \ell}\bx^k\by^\ell \]
say, where the coefficients~$(d_{k,\ell})_{k+\ell\geq 1}$ are positive and~$d_{1,0}=d_{0,1}=1/2$. This yields~\eqref{dvpt-h}.

We continue with the expression~\eqref{eq-g-integrale}. Since~$\cK(1-t)=-\cK(t)$, we deduce
\[ g\Big(\frac{1-\bx}{1-\by}\Big) = (\bx-\by)^2\int_0^1\frac{(t-1/2)\cK(t)\dd t}{(1-(t\by+(1-t)\bx))(1-(t\bx+(1-t)\by))} \]
from which we deduce the existence of the function~$\xi(\bx, \by)$ satisfying~\eqref{def-xi} and its analyticity. Note that~$(t-1/2)\cK(t)\geq 0$ for~$t\in[0, 1]$. For all~$t\in[0, 1]$, we let
\[ R_t(\bx, \by) := \frac1{(1-(t\by+(1-t)\bx))(1-(t\bx+(1-t)\by))} = \sum_{k, \ell\geq 0}r_{k,\ell}(t)\bx^k\by^\ell, \]
for some numbers~$r_{k,\ell}(t)$, by expanding the rational fraction as a power series in~$t\bx+(1-t)\by$ and~$t\by+(1-t)\bx$, which in turn is a power series in~$\bx$ and~$\by$ whose coefficients are polynomial combinations of~$t$ and~$1-t$ with positive coefficients. Therefore, for all~$k, \ell\geq 0$, $r_{k,\ell}(t)$ is a non-zero polynomial in~$t$ with~$r_{k,\ell}(t)\geq 0\quad(t\in[0, 1])$. Setting
\[ d'_{k,\ell} := \int_0^1(t-1/2)\cK(t)r_{k,\ell}(t)\dd t, \]
the expansion~\eqref{dvpt-xi}, along with the positivity of the coefficients, follows at once. Furthermore, it is easily seen by induction that for all~$k, \ell\geq 0$,
\[ \partial_{k,\ell}R_t(\bx, \by) = \sum_{1\leq j\leq k+\ell+1}\frac{P^{(j)}_{k,\ell}(t)}{(1-(t\by+(1-t)\bx))^j(1-(t\bx+(1-t)\by))^{k+\ell+2-j}} \]
for some non-zero polynomials~$P_{k,\ell}^{(j)}(t)\geq 0\ (t\in[0, 1])$. This yields the equation~\eqref{expr-ders-xi}. The fact that~$d'_{0,0} = 1/24$ is a simple calculation; it implies that~$d''_{0,0}=1/24$ by specialization at~$\bx=0$.

The inequality~\eqref{pos-hessienne-h} is proved by a direct computation. Let~$\bz := (1-\bx)/(1-\by) > 0$. Then
\[ [(\partial_{20}h)(\partial_{02}h)-(\partial_{11}h)^2](\bx, \by) = \frac{\frac{1+\bz}{\bz-1}\log \bz - 2}{(1-\bx)^2(1-\by)^2(\log \bz)^2} \]
which is extended by continuity as~$1/(6(1-\bx)^4)$ when~$\bx=\by$. When~$\bz\neq 1$, the positivity of the numerator is easy to establish.

\end{proof}

We introduce for~$\delta\geq0$ the subset
\[ \cD_\delta(\alpha ; y) := \big\{(\sigma, \kappa)\in(0, 1]^2 : (\sigma-\alpha)(\alpha-\kappa)\geq 0, \quad \frac1{1+\delta}\leq \frac{1-2^{-\sigma}}{1-2^{-\kappa}} \leq 1+\delta \ \text{ and }\ |\sigma-\kappa|\log y \leq \delta\big\}. \]
The first condition simply means that~$\alpha$ is between is~$\sigma$ and~$\kappa$. The other guarantee that~$\sigma$ and~$\kappa$ are adequately close to each other. Note that~$\cD_{\delta'}(\alpha ; y) \subset \cD_{\delta}(\alpha ; y)$ whenever~$\delta'\leq \delta$, and that if~$(\sigma, \kappa)\in\cD_{\delta}(\alpha ; y)$, then uniformly for~$p\leq y$, we have
\begin{equation}\label{approx-cD-alpha} 1-p^{-\sigma} = (1-p^{-\alpha})\{1+O(\delta)\}, \qquad \sigma = \alpha\{1+O(\delta+(\log y)^{-1})\}, \qquad p^{\sigma} = p^\alpha\{1+O(\delta)\}, \end{equation}
and similarly for~$\kappa$.

In the next lemma, we deduce from the properties of the series~$\Xi(\bx, \by)$ some information about the function~$f_y(s, w)$ defined in~\eqref{def-fy}. Recall that~$\sigma_2$ and~$\tsigma_2$ were defined by~\eqref{def-sigmas}.

\begin{lemme}\label{lemme-props-fy}
\begin{enumerate}[(i)]
For some absolute constant~$\delta_0>0$ and all~$2\leq y\leq x$, the following assertions hold.
\item For all~$(s, w)\in\cH^2$ and~$k,\ell\geq0$, we have~$\Re\big\{\partial_{k\ell}f_y(s, w)\big\} \leq  \partial_{k\ell}f_y(\sigma, \kappa)$.
\item For all~$\sigma, \kappa > 0$, we have
\[ [\partial_{20}f_y + \partial_{11}f_y](\sigma, \kappa) > 0 \quad\text{and}\quad \hess[f_y](\sigma, \kappa) > 0. \]
\item For all non negative integers~$k, \ell$ with~$(k, \ell) \neq (0, 0)$, we have
\[ \partial_{k\ell}f_y(\sigma, \kappa) \ll_{k,\ell} |\phi_{k+\ell}(\alpha, y)| \qquad ((\sigma, \kappa)\in\cD_{\delta_0}(\alpha ; y)) .\]
\item Whenever~$0\leq\delta\leq\delta_0$ and~$(\sigma, \kappa)\in\cD_\delta(\alpha ; y)$, we have
\begin{equation*}\begin{aligned}
\partial_{20}f_y(\sigma, \kappa) + \partial_{11}f_y(\sigma, \kappa) &= \frac{\sigma_2}2 + O(\delta\sigma_2), \\ \hess[f_y](\sigma, \kappa) &= \frac{\sigma_2}4\big\{\sigma_2 - \frac{\tsigma_2}3\big\} + O(\delta\sigma_2^2).
\end{aligned}\end{equation*}
\end{enumerate}
\end{lemme}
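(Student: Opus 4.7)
The plan is to exploit the positivity of the Taylor coefficients of $\Xi$ and $\xi$ established in Lemma~\ref{props-h}. Substituting $\bx=p^{-s}$, $\by=p^{-w}$ in the expansion~\eqref{dvpt-h} gives an absolutely convergent double series
\[ f_y(s,w) = \sum_{p\leq y}\sum_{j+m\geq 1} d_{j,m}\,p^{-js-mw} \]
with $d_{j,m}\geq 0$ and $d_{1,0}=d_{0,1}=\tfrac{1}{2}$. Differentiating termwise and using $\Re(p^{-js-mw})\leq p^{-j\sigma-m\kappa}$ yields~(i) (the sign $(-1)^{k+\ell}$ appears on both sides and cancels). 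For~(ii), the same expansion identifies the symmetric bilinear form of second derivatives with
\[ u^2\partial_{20}f_y + 2uv\,\partial_{11}f_y + v^2\partial_{02}f_y = \sum_{p\leq y}(\log p)^2\sum_{j,m}d_{j,m}(ju+mv)^2\,p^{-j\sigma-m\kappa}, \]
which is positive-definite on $\bfR^2\smallsetminus\{(0,0)\}$: any null vector $(u,v)$ would have to annihilate every $(ju+mv)^2$ with $d_{j,m}>0$, but the cases $(j,m)=(1,0)$ and $(0,1)$ force $u=v=0$. This gives $\hess[f_y]>0$. The inequality $\partial_{20}f_y+\partial_{11}f_y>0$ falls out of the analogous series $\sum_p(\log p)^2\sum_{j,m}d_{j,m}\,j(j+m)\,p^{-j\sigma-m\kappa}$, whose $(j,m)=(1,0)$ contribution alone is strictly positive.

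For parts~(iii) and~(iv) the essential reformulation comes directly from~\eqref{definition-h}:
\[ f_y(s,w) = \tfrac{1}{2}\phi_0(s,y) + \tfrac{1}{2}\phi_0(w,y) - G_y(s,w), \qquad G_y(s,w) := \sum_{p\leq y}g\!\left(\frac{1-p^{-s}}{1-p^{-w}}\right). \]
By Lemma~\ref{props-h}(ii), $G_y(s,w)=\sum_{p\leq y}(p^{-s}-p^{-w})^2\,\xi(p^{-s},p^{-w})$, so $G_y$ and its first-order partials vanish on the diagonal $\{s=w\}$. The second-order partials at $(\alpha,\alpha)$ are obtained by letting both derivatives hit the factor $(p^{-s}-p^{-w})^2$, and using $\xi(\bx,\bx)=d''_{0,0}/(1-\bx)^2=1/(24(1-\bx)^2)$ from~\eqref{expr-ders-xi}, I compute
\[ \partial_{ss}G_y(\alpha,\alpha)=\partial_{ww}G_y(\alpha,\alpha)=\frac{\tsigma_2}{12}, \qquad \partial_{sw}G_y(\alpha,\alpha)=-\frac{\tsigma_2}{12}. \]
Combined with $\partial_s^2\phi_0(\alpha,y)=\sigma_2$, this yields $\partial_{20}f_y(\alpha,\alpha)=\partial_{02}f_y(\alpha,\alpha)=\tfrac{\sigma_2}{2}-\tfrac{\tsigma_2}{12}$ and $\partial_{11}f_y(\alpha,\alpha)=\tfrac{\tsigma_2}{12}$, whence $\partial_{20}f_y+\partial_{11}f_y=\sigma_2/2$ and $\hess[f_y]=(\sigma_2/2-\tsigma_2/12)^2-(\tsigma_2/12)^2=\sigma_2(\sigma_2-\tsigma_2/3)/4$, which is~(iv) with $\delta=0$.

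To pass to arbitrary $(\sigma,\kappa)\in\cD_\delta$ I would apply the uniform approximations~\eqref{approx-cD-alpha}: for every prime $p\leq y$, one has $p^{-\sigma}=p^{-\alpha}\{1+O(\delta)\}$, $p^{-\kappa}=p^{-\alpha}\{1+O(\delta)\}$ and $|\sigma-\kappa|\log p\ll\delta$. Inserted term-by-term into the series for $\partial_{k\ell}\phi_0(\sigma,y)$ and $\partial_{k\ell}G_y(\sigma,\kappa)$, these perturbations appear as multiplicative $1+O(\delta)$ factors on each prime summand, producing the advertised $O(\delta\sigma_2)$ and $O(\delta\sigma_2^2)$ errors. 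Part~(iii) then follows from this calculation combined with the bound $\tphi_k(\alpha,y)\ll|\phi_k(\alpha,y)|$ recalled in Section~\ref{section-sigma} (for the $\phi_0$-part) and a direct uniform bound on $\partial_{k\ell}\xi$ on compact subsets of $\cU^2$ via the positive expansion~\eqref{dvpt-xi} (for the $G_y$-part); the factor $(p^{-s}-p^{-w})^2$ further contributes a gain of $O(\delta^2)$, which is harmless.

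The main obstacle is the derivative computation in~(iv): identifying cleanly which terms survive on the diagonal when differentiating the product $(p^{-s}-p^{-w})^2\,\xi(p^{-s},p^{-w})$, and tracking the constant $\tfrac{1}{12}$ that descends from $d''_{0,0}=1/24$. Once the leading coefficients are pinned down at $(\alpha,\alpha)$, the extension to $\cD_\delta$ and the derivation of~(iii) reduce to routine bookkeeping via~\eqref{approx-cD-alpha}.
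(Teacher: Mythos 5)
Your overall strategy is the paper's: exploit the positive coefficients of $\Xi$ and $\xi$, split $f_y(s,w)=\tfrac12\phi_0(s,y)+\tfrac12\phi_0(w,y)-G_y(s,w)$, compute the second derivatives on the diagonal from $d''_{0,0}=1/24$ (your values $\partial_{20}f_y(\alpha,\alpha)=\sigma_2/2-\tsigma_2/12$, $\partial_{11}f_y(\alpha,\alpha)=\tsigma_2/12$ are correct), and perturb via~\eqref{approx-cD-alpha}; your proof of~(ii) by positive-definiteness of the quadratic form is even a little more direct than the paper's Cauchy--Schwarz argument. Two steps, however, do not hold as written. First, in~(i) the sign $(-1)^{k+\ell}$ does not ``cancel'': termwise differentiation in $(s,w)$ gives $\partial_{k\ell}f_y(s,w)=(-1)^{k+\ell}\sum_{p}\sum_{j,m}d_{j,m}(j\log p)^k(m\log p)^\ell p^{-js-mw}$, and multiplying the inequality $\Re(p^{-js-mw})\le p^{-j\sigma-m\kappa}$ by a negative prefactor reverses it when $k+\ell$ is odd; for instance with $(k,\ell)=(1,0)$ one in fact gets $\Re\{\partial_{10}f_y(s,w)\}\ge\partial_{10}f_y(\sigma,\kappa)$. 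What positivity genuinely yields, and what is needed later for the order-$4$ Taylor remainder, is the modulus bound $|\partial_{k\ell}f_y(s,w)|\le|\partial_{k\ell}f_y(\sigma,\kappa)|$ (equivalently the stated inequality for even $k+\ell$, or the inequality for the $(\bx,\by)$-derivatives of $\Xi$ composed with $p^{-s},p^{-w}$, which is how the paper's proof actually reads); you should prove it in that form.

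Second, in~(iii) a uniform bound for $\partial_{k\ell}\xi$ ``on compact subsets of $\cU^2$'' is the wrong tool: in the regime this paper targets ($y$ small, hence $\alpha$ small) the points $(p^{-\sigma},p^{-\kappa})$ approach the boundary point $(1,1)$, where $\partial_{j_1j_2}\xi$ blows up like $(1-p^{-\alpha})^{-j_1-j_2-2}$ by~\eqref{expr-ders-xi}. With only an $O(1)$ bound, your estimate for the $G_y$-part is of order $\pi(y)(\log y)^{k+\ell}$, which is far larger than $|\phi_{k+\ell}(\alpha,y)|\asymp(u\log y)^{k+\ell}(\ubar)^{1-k-\ell}$ in typical ranges (take $y=\exp\{\sqrt{\log x}\}$ and $k+\ell=2$). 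The correct route is the monotonicity~\eqref{comp-xi}, which follows from the positive expansion~\eqref{dvpt-xi} together with the diagonal values~\eqref{expr-ders-xi}: combined with $|p^{-\sigma}-p^{-\kappa}|\ll 1-p^{-\alpha}$ on $\cD_{\delta_0}(\alpha;y)$, it gives $\partial_{j_1j_2}\Xi(\sfa,\sfb)\ll(1-\sfa)^2(1-\sfb)^{-j_1-j_2-2}$ for $\sfa\le\sfb$, and after the chain rule and summation over $p\le y$ one gets $\partial_{k\ell}f_y(\sigma,\kappa)\ll\tphi_{k+\ell}(\alpha,y)+u(\log y)^{k+\ell}\ll|\phi_{k+\ell}(\alpha,y)|$; note that the bound $\tphi_{k+\ell}\ll|\phi_{k+\ell}|$ is what handles this $G_y$-part, not the $\phi_0$-part. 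The same boundary estimates are also what justifies your passage from the diagonal to $\cD_\delta$ in~(iv): the perturbation is not purely a multiplicative $1+O(\delta)$ factor on each summand, since genuinely new off-diagonal terms appear; but each such term carries a factor $|p^{-\sigma}-p^{-\kappa}|\ll\delta(1-p^{-\alpha})$ which, played against the same blow-up rates, produces the stated $O(\delta\sigma_2)$ and $O(\delta\sigma_2^2)$ errors.
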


\begin{proof}
Part~(i) follows immediately from part~(i) of Lemma~\ref{props-h}. Indeed, for all fixed indices~$k, \ell\geq 0$, we can write~$\partial_{k\ell}\Xi(\bx, \by) = \sum_{j_1, j_2\geq 0}\tilde{d}_{j_1, j_2}\bx^{j_1} \by^{j_2}$ for some non-negative coefficients~$\tilde{d}_{j_1, j_2}$ depending on~$k$ and~$\ell$. Then
\begin{align*}
\Re\big\{\partial_{k\ell}f_y(s, w)-\partial_{k\ell}f_y(\sigma, \kappa)\big\} = -\sum_{j_1, j_2\geq 0}\tilde{d}_{j_1, j_2}\sum_{p\leq y}\frac{1-\cos((j_1\tau+ j_2t)\log p)}{p^{j_1\sigma+j_2\kappa}} \leq 0 \end{align*}
by positivity. Regarding part~(ii), the inequality~$[\partial_{20}f_y + \partial_{11}f_y](\sigma, \kappa) > 0$ also follows immediately by linearity from the equality
\[ [\partial_{20}f_y + \partial_{11}f_y](\sigma, \kappa) = \sum_{p\leq y}(\log p)^2\big[\sfa\partial_{10}\Xi(\sfa, \sfb) + \sfa^2\partial_{20}\Xi(\sfa, \sfb)+\sfa\sfb\partial_{11}\Xi(\sfa, \sfb)\big]_{\substack{\sfa=p^{-\sigma}\\\sfb=p^{-\kappa}}} \]
and the positivity of the coefficients in the expansion~\eqref{dvpt-h}. Concerning the hessian, we apply the Cauchy--Schwarz inequality, getting
\begin{align*}
[(\partial_{20}f_y)(\partial_{02}f_y)](\sigma, \kappa) \geq\ & \Big(\sum_{p\leq y}(\log p)^2\sqrt{\big[(\sfa\partial_{10}\Xi(\sfa, \sfb)+\sfa^2\partial_{20}\Xi(\sfa, \sfb))(\sfb\partial_{01}\Xi(\sfa, \sfb)+\sfb^2\partial_{02}\Xi(\sfa, \sfb))\big]_{\substack{\sfa=p^{-\sigma}\\ \sfb=p^{-\kappa}}}}\Big)^2
\\\geq\ & \Big(\sum_{p\leq y}(\log p)^2\sqrt{\big[\sfa^2\sfb^2\partial_{20}\Xi(\sfa, \sfb)\partial_{02}\Xi(\sfa, \sfb)\big]_{\substack{\sfa=p^{-\sigma} \\\sfb=p^{-\kappa}}}}\Big)^2. \end{align*}
By~\eqref{pos-hessienne-h}, the last sum over~$p$ is strictly greater than
\[ \sum_{p\leq y}(\log p)^2p^{-\sigma-\kappa}\partial_{11}\Xi(p^{-\sigma}, p^{-\kappa}) = \partial_{11}f_y(\sigma, \kappa) \]
as required.

We now turn to estimating the derivatives of~$f_y$. Assume that~$(\sigma, \kappa)\in\cD_\delta(\alpha ; y)$ for some small~$\delta$. Recall that
\[ \Xi(\sfa, \sfb) = -\frac12\log(1-\sfa)-\frac12\log(1-\sfb) - (\sfa-\sfb)^2\xi(\sfa, \sfb). \]
Let~$k, \ell\geq 0$ be fixed with~$k+\ell\geq 1$. Then the derivative~$\partial_{k\ell}\Xi(\sfa, \sfb)$ can be written as a linear combination with bounded coefficients of terms assuming one of the following four shapes:
\begin{equation}\label{liste-termes-derivees-h}
\left\{\begin{aligned} &(1-\sfa)^{-k}\quad \text{if~$\ell=0$}, \qquad \text{or} \qquad (1-\sfb)^{-\ell}\quad \text{if~$k=0$,} \\
& (\sfa-\sfb)^2\partial_{k\ell}\xi(\sfa, \sfb), \\
& (\sfa-\sfb)\partial_{j_1j_2}\xi(\sfa, \sfb) \qquad \text{with~$j_1+j_2=k+\ell-1$ and~$j_i\geq 0$}, \\
& \partial_{j_1j_2}\xi(\sfa, \sfb) \qquad \text{with~$j_1+j_2=k+\ell-2$ and~$j_i\geq 0$}. \end{aligned}\right.
\end{equation}
Suppose for simplification that~$\sfa\leq \sfb$. Then for any~$j_1, j_2\geq 0$, we have
\begin{equation}
\label{comp-xi}
\partial_{j_1j_2}\xi(\sfa, \sfa)\leq \partial_{j_1j_2}\xi(\sfa, \sfb) \leq \partial_{j_1j_2}\xi(\sfb, \sfb) \ll_{j_1j_2} (1-\sfb)^{-j_1-j_2-2}
\end{equation}
by virtue of~\eqref{dvpt-xi} and~\eqref{expr-ders-xi}. Noting that~$|\sfa-\sfb|\leq 1-\sfa$, It follows that each of the four expressions given in~\eqref{liste-termes-derivees-h} is bounded by~$O_{k,\ell}((1-\sfa)^2(1-\sfb)^{-k-\ell-2})$, so that
\[ \partial_{k\ell}\Xi(\sfa, \sfb) \ll_{k, \ell} (1-\sfa)^2(1-\sfb)^{-k-\ell-2} \qquad (k+\ell\geq 1,\ \sfa\leq\sfb). \]
By symmetry, when~$\sfb\leq \sfa$ the same estimate holds if we swap~$\sfa$ and~$\sfb$ in the right-hand side.
Next, we specialize~$\sfa=p^{-\sigma}$, $\sfb=p^{-\kappa}$. By the property~\eqref{approx-cD-alpha}, if~$\delta$ is small enough, we have for all~$k, \ell\geq 0$ with~$k+\ell\geq 1$
\begin{equation}
  \label{majo-drvptl-h}
 \partial_{k\ell}\Xi(p^{-\sigma}, p^{-\kappa}) \ll_{k,\ell} (1-p^{-\alpha})^{-k-\ell}.  
\end{equation}
Differentiating the function~$(\sigma, \kappa)\mapsto \Xi(p^{-\sigma}, p^{-\kappa})$, $k$ times with respect to~$\sigma$ and~$\ell$ times with respect to~$\kappa$ yields a linear combination of terms of the shape
\[ (\log p)^{k+\ell}p^{-j_1\sigma - j_2\kappa}\partial_{j_1j_2}\Xi(p^{-\sigma}, p^{-\kappa}) \qquad (1\leq j_1+j_2\leq k+\ell)\]
each of which is bounded by~$O((\log p)^{k+\ell}(p^{\alpha}-1)^{-j_1-j_2})$ (here we used~\eqref{approx-cD-alpha} and~\eqref{majo-drvptl-h}). Summing over~$p\leq y$, we obtain
\[ \partial_{k\ell}f_y(\sigma, \kappa) \ll_{k,\ell} \sum_{p\leq y} (\log p)^{k+\ell}\Big\{\frac1{(p^\alpha-1)^{k+\ell}}+\frac1{p^\alpha-1}\Big\} \leq \tphi_{k+\ell}(\alpha, y) + u(\log y)^{k+\ell}. \]
Each of the last two terms is bounded from above by~$O(|\phi_{k+\ell}(\alpha, y)|)=O((u\log y)^{k+\ell}(\ubar)^{1-k-\ell})$ and this proves part~(iii).

We now estimate the hessian. A direct calculation reveals that
\[ \partial_{20}f_y(\sigma, \kappa) = \frac12\phi_2(\sigma, y) - \sum_{p\leq y}(\log p)^2\big[2\sfa(\sfa-\sfb)\xi + \sfa(\sfa-\sfb)^2\partial_{10}\xi + 2\sfa^2\xi + 4\sfa^2(\sfa-\sfb)\partial_{10}\xi + \sfa^2(\sfa-\sfb)^2\partial_{20}\xi\big]_{\substack{a=p^{-\sigma}\\b=p^{-\kappa}}}, \]
\[ \partial_{11}f_y(\sigma, \kappa) = -\sum_{p\leq y}(\log p)^2\big[\sfa\sfb\big\{-2\xi + 2(\sfa-\sfb)(\partial_{10}\xi - \partial_{01}\xi) + (\sfa-\sfb)^2\partial_{11}\xi\big\}\big]_{\substack{a=p^{-\sigma}\\b=p^{-\kappa}}}, \]
where we abbreviated for simplicity~$\partial_{k\ell}\xi = \partial_{k\ell}\xi(\sfa, \sfb)$. 
Using~\eqref{approx-cD-alpha} and the properties~\eqref{expr-ders-xi} and~\eqref{comp-xi}, we obtain
\[ \partial_{20}f_y(\sigma, \kappa) = \frac12\phi_2(\alpha, y) - 2\sum_{p\leq y}(\log p)^2p^{-2\alpha}\xi(p^{-\sigma}, p^{-\kappa}) + O(\delta\phi_2(\alpha, y)) ,\]
\[ \partial_{11}f_y(\sigma, \kappa) = 2\sum_{p\leq y}(\log p)^2p^{-2\alpha}\xi(p^{-\sigma}, p^{-\kappa}) + O(\delta\phi_2(\alpha, y)). \]
Using once more the properties~\eqref{dvpt-xi} and~\eqref{expr-ders-xi}, along with the value~$d''_{0,0}=1/24$, we get
\[ \partial_{20}f_y(\sigma, \kappa) = \frac{\sigma_2}2 - \frac{\tsigma_2}{12} + O(\delta\sigma_2), \qquad \partial_{11}f_y(\sigma, \kappa) = \frac{\tsigma_2}{12} + O(\delta\sigma_2) .\]
Using the symmetry of~$f_y$ with respect to~$\sigma\leftrightarrow \kappa$, we finally obtain
\begin{align*} \hess[f_y](\sigma, \kappa) = \Big(\frac{\sigma_2}2 - \frac{\tsigma_2}{12}\Big)^2 - \Big(\frac{\tsigma_2}{12}\Big)^2 + O(\delta\sigma_2^2) = \frac{\sigma_2}4\Big\{\sigma_2 - \frac{\tsigma_2}3\big\} + O(\delta\sigma_2^2) \end{align*}
which gives part~(iv) of the lemma.
\end{proof}

\subsection{Decay estimates along vertical lines}

For the saddle-point method to succeed, it is required that the tails of the integrals in~\eqref{perron2} contribute a negligible quantity. The following lemma, which provides sufficient information for this purpose, states that the decay of~$F_y(s, w)$ away from the to-be saddle-points is reasonnably good compared with what a Taylor formula at order~$2$ would predict, even in a range where the Taylor formula turns out not to be relevant. It is an analog of~\cite[Lemma~8]{HT86}. We recall our notation that~$c_1, c_2, \dots$ denote constants, which are absolute unless otherwise specified.
\begin{lemme}\label{decr-fy}
\begin{enumerate}[(i)]
\item Whenever
\[ |\sigma-\kappa|\leq c_1\sigma, \quad \max\{\sigma+|\tau|, \kappa+|t|\}\leq c_1/(\log y), \]
we have
$$ \Re\big\{f_y(s, w) - f_y(\sigma, \kappa)\big\} \leq -c_1\frac{y}{\log y}\Big\{\log\Big(1+\Big(\frac{\tau}{\sigma}\Big)^2\Big) + \log\Big(1+\Big(\frac{t}{\kappa}\Big)^2\Big) \Big\}. $$

\item When~$\sigma\leq c_1/\log y$ and~$|\tau|\leq \e^{(\log y)^{3/2-\ee}}$, we have
$$ \Re\big\{f_y(s, w) - f_y(\sigma, \kappa)\big\} \leq -c_1\frac{y}{\log y}\big\{\log\big(1+\big(\frac{\tau\log x}y\big)^2\big) + O(1)\big\}. $$

\item For all~$\ee>0$, there exists~$c_2(\ee)>0$ depending only on~$\ee$ such that whenever
\[ \min\{\sigma, \kappa\}\geq\ee/\log y, \quad \max\{\sigma, \kappa\}\leq 0.6, \quad \max\{|\tau|, |t|\}\leq c_2(\ee)/\log y, \]
we have
$$ \Re\big\{f_y(s, w) - f_y(\sigma, \kappa)\big\} \leq -c_2(\ee)\Big\{ \tau^2\phi_2(\sigma, y) + t^2\phi_2(\kappa, y) \Big\}. $$

\end{enumerate}
\end{lemme}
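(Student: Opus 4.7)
The three parts all rest on the same positivity identity. By Lemma~\ref{props-h}(i), $\Xi(\sfa,\sfb) = \sum_{k+\ell\geq 1}d_{k,\ell}\sfa^k\sfb^\ell$ with all $d_{k,\ell}\geq 0$ and $d_{1,0}=d_{0,1}=\tfrac12$. Substituting $\sfa=p^{-s}$, $\sfb=p^{-w}$ and summing over $p\leq y$ yields
$$
\Re\{f_y(\sigma,\kappa)-f_y(s,w)\} = \sum_{k+\ell\geq 1}d_{k,\ell}\sum_{p\leq y}p^{-k\sigma-\ell\kappa}\bigl(1-\cos((k\tau+\ell t)\log p)\bigr),
$$
a sum of non-negative terms. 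The strategy in each part is to retain a carefully chosen sub-sum and discard the rest.

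For part (ii), I would keep only the index $(k,\ell)=(1,0)$, yielding
$$
\Re\{f_y(\sigma,\kappa)-f_y(s,w)\} \geq \tfrac12\sum_{p\leq y}p^{-\sigma}\bigl(1-\cos(\tau\log p)\bigr),
$$
which involves neither $\kappa$ nor $t$, as required. This one-variable prime sum is precisely the $k=1$ slice of the positive Dirichlet-series expansion of $\Re\{\phi_0(\sigma,y)-\phi_0(s,y)\}$, and is therefore bounded below by a quantity of exactly the shape proved in \cite[Lemma~8]{HT86} for $\log\zeta(s,y)$, whose range of validity $|\tau|\leq \e^{(\log y)^{3/2-\ee}}$ matches ours. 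Part (i) is handled symmetrically, keeping both the $(1,0)$ and $(0,1)$ slices and invoking the short-range analogue of the same HT86 lemma in each direction.

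Part (iii) is more subtle: retaining only the $(1,0)$ and $(0,1)$ terms and using $1-\cos(x)\geq x^2/3$ for $|x|\leq 1$ yields a lower bound proportional to $\tau^2\sum_p(\log p)^2 p^{-\sigma}$, which falls short of $\tau^2\phi_2(\sigma,y)$ by a factor $\asymp 1/\sigma$ when $\sigma$ is close to $\ee/\log y$. Instead, I would Taylor-expand $f_y$ around the real point $(\sigma,\kappa)$: since $\partial_{10}f_y$ and $\partial_{01}f_y$ are real there, the purely imaginary perturbation $(i\tau,it)$ gives
$$
\Re\{f_y(s,w)-f_y(\sigma,\kappa)\} = -\tfrac12\bigl[\tau^2\partial_{20}f_y + 2\tau t\,\partial_{11}f_y + t^2\partial_{02}f_y\bigr](\sigma,\kappa) + E_3,
$$
with remainder $E_3$ controlled by third-order derivatives of $f_y$. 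A direct calculation along the lines of Lemma~\ref{lemme-props-fy}(iv), but carried out for generic $(\sigma,\kappa)$ in the stated range rather than close to the saddle-point, will give $\partial_{20}f_y(\sigma,\kappa)\asymp\phi_2(\sigma,y)$ and $\partial_{02}f_y(\sigma,\kappa)\asymp\phi_2(\kappa,y)$, while Lemma~\ref{lemme-props-fy}(ii) ensures positive definiteness of the quadratic form. Choosing $c_2(\ee)$ sufficiently small then absorbs $E_3$ into the quadratic main term.

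The hardest step is the quantitative coercivity of this quadratic form across the full range $\sigma,\kappa\in[\ee/\log y,0.6]$: when $\sigma$ and $\kappa$ are of very different sizes, $\phi_2(\sigma,y)$ and $\phi_2(\kappa,y)$ can differ substantially, and one must show that $|\partial_{11}f_y(\sigma,\kappa)|$ stays suitably below $\sqrt{\phi_2(\sigma,y)\phi_2(\kappa,y)}$. This amounts to upgrading the Cauchy--Schwarz step from the proof of Lemma~\ref{lemme-props-fy}(ii) into an explicit spectral gap; the underlying input is the strict pointwise inequality $[(\partial_{20}\Xi)(\partial_{02}\Xi)-(\partial_{11}\Xi)^2](\sfa,\sfb)>0$ from Lemma~\ref{props-h}(iv), which will need to be quantified with an effective lower bound on the segment joining $(p^{-\sigma},p^{-\sigma})$ to $(p^{-\kappa},p^{-\kappa})$.
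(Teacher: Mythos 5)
Your positivity identity is correct (it is the same one the paper uses elsewhere), but the way you exploit it in parts (i) and (ii) cannot work. Keeping only the $(k,\ell)=(1,0)$ and $(0,1)$ slices gives a lower bound of at most $\tfrac12\sum_{p\leq y}p^{-\sigma}\{1-\cos(\tau\log p)\}+\tfrac12\sum_{p\leq y}p^{-\kappa}\{1-\cos(t\log p)\}\leq 2\pi(y)$, and in the range of part (i) (where $|\tau|\log p\leq c_1$) it is in fact $\ll (\tau\log y)^2\,y/\log y\ll y/\log y$. The bounds to be proved, however, are $\asymp \pi(y)\log(1+(\tau/\sigma)^2)$ in (i) and $\asymp\pi(y)\log(1+(\tau\log x/y)^2)$ in (ii), and the logarithmic factors are \emph{unbounded} precisely in the regime the lemma is designed for ($\sigma\asymp\alpha=o(1/\log y)$, i.e.\ $y$ small compared with $\log x$). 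The missing growth comes from the accumulation over \emph{all} powers $k$: per prime, $-\tfrac12\log|1-p^{-s}|$ exceeds $-\tfrac12\log(1-p^{-\sigma})$ by roughly $\tfrac14\log(1+(\tau/\sigma)^2)$, and the terms $1/(\sigma\log p\,|\tau/\sigma|)\lesssim k\lesssim 1/(\sigma\log p)$ each contribute $\asymp 1/k$; discarding them is exactly what makes your sub-sum too small. The appeal to \cite[Lemma~8]{HT86} is also backwards: positivity shows the full sum dominates the slice, not that the slice inherits the lower bound proved for the full sum; and the large-$|\tau|$ saving in that lemma saturates at $\e^{-c\ubar}$, which is far from the $(1+\tau/(\sigma\log y))^{-c\pi(y)}$-type decay required in (ii).

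For comparison, the paper proves (i) by a \emph{per-prime} inequality $\Re\{\Xi(p^{-s},p^{-w})-\Xi(p^{-\sigma},p^{-\kappa})\}\leq-\tfrac1{100}\{\log(1+(\tau/\sigma)^2)+\log(1+(t/\kappa)^2)\}$, obtained by differentiating in an auxiliary parameter and controlling the functions $\fr,\fs$ of \eqref{def-frfs} through sign properties of $L(\omega)=(\sinh\omega)^{-2}-\omega^{-2}$; and it proves (ii) by writing $F_y(s,w)=\zeta(s,y)^{1/2}\zeta(w,y)^{1/2}\e^{-\sum_p g}$, importing the bound $\zeta(s,y)/\zeta(\sigma,y)\ll(1+\tau/(\sigma\log y))^{-c\pi(y)}$ from La Bret\`eche--Tenenbaum, and showing $\Re g\geq-\pi/2$ uniformly -- inputs your sketch does not touch. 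Your plan for (iii) (Taylor expansion in the imaginary directions around $(\sigma,\kappa)$) is a legitimate alternative outline, but its decisive step -- uniform coercivity of the quadratic form, i.e.\ $\partial_{20}f_y\asymp\phi_2(\sigma,y)$, $\partial_{02}f_y\asymp\phi_2(\kappa,y)$ together with an effective gap $|\partial_{11}f_y|\leq(1-c)\sqrt{\partial_{20}f_y\,\partial_{02}f_y}$ for $\sigma,\kappa$ possibly of very different sizes -- is exactly what you leave unproved, and it does not follow from the pointwise strict inequality \eqref{pos-hessienne-h} without quantification; the paper avoids it altogether by another per-prime scalar estimate, after discarding the primes $p\leq y^{1/2}$ via $\phi_2(\sigma,y^{1/2})\ll y^{-0.2}\phi_2(\sigma,y)$. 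As it stands, all three parts have genuine gaps, the first two being fatal to the proposed route.
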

Although it is elementary, the proof of this lemma is somewhat lengthy and otherwise unrelated to the rest of the argument: it is postponed to the appendix. We deduce the following estimate for~$F_y(s, w)$.

\begin{coro}\label{decr-F}
Let~$|\tau|, |t|\leq \exp\{(\log y)^{4/3}\}$. For some absolute constants~$\delta, c_3>0$, whenever~$(\sigma, \kappa)\in\cD_\delta(\alpha ; y)$, the following holds.
\begin{enumerate}[(i)]
\item For~$\max\{|\tau|, |t|\}\gg 1/\log y$, we have
\begin{equation}\label{majo-F-haut}
\Big|\frac{F_y(s, w)}{F_y(\sigma, \kappa)}\Big| \ll \exp\Big\{-c_3\ubar\Big(\frac{\tau^2}{(1-\alpha)^2+\tau^2}+\frac{t^2}{(1-\alpha)^2+t^2}\Big) \Big\}.
\end{equation}
\item For~$\max\{|\tau|, |t|\}\leq c_3/\log y$, we have
\begin{equation}\label{majo-F-bas}
\Big|\frac{F_y(s, w)}{F_y(\sigma, \kappa)}\Big| \leq \Big\{\Big(1+\frac{\tau^2\phi_2(\alpha, y)}{y/\log y}\Big)\Big( 1+\frac{t^2\phi_2(\alpha, y)}{y/\log y}\Big)\Big\}^{-c_3\frac{y}{\log y}}.
\end{equation}
\item For~$\alpha \leq c_3/\log y$ and~$|\tau|\leq \e^{(\log y)^{3/2-\ee}}$, we have
\begin{equation}
\label{eq:majo-F-mid}
\Big|\frac{F_y(s, w)}{F_y(\sigma, \kappa)}\Big| \leq \e^{O(y/\log y)}\big(1 + \frac{(\tau\log x)^2}{(1+\tau^2)y^2}\big)^{-c_4(\ee) y/\log y}
\end{equation}
with~$c_4(\ee)>0$ depending only on~$\ee>0$.
\end{enumerate}
\end{coro}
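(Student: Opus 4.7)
The starting observation is that $F_y = \exp(f_y)$ implies $|F_y(s,w)/F_y(\sigma,\kappa)| = \exp\{\Re(f_y(s,w) - f_y(\sigma,\kappa))\}$, so each of the three inequalities of the corollary reduces to an upper bound on this real part. The membership $(\sigma,\kappa) \in \cD_\delta(\alpha;y)$, with $\delta$ chosen smaller than an absolute constant, together with \eqref{approx-cD-alpha} and the estimates of Section~\ref{section-sigma}, yields $\sigma \asymp \kappa \asymp \alpha$ and $\phi_k(\sigma,y) \asymp \phi_k(\kappa,y) \asymp \phi_k(\alpha,y)$ for every $k$. Hypotheses of Lemma~\ref{decr-fy} on $(\sigma,\kappa)$ thus translate into comparable hypotheses on $\alpha$, and I will also use the evident symmetry $F_y(s,w) = F_y(w,s)$ (coming from $\tau(nd) = \tau(dn)$) to obtain analogous bounds with the roles of $s$ and $w$ swapped.

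Parts (ii) and (iii) of the corollary follow as nearly direct consequences of parts (iii) and (ii) of Lemma~\ref{decr-fy} respectively. For part (iii) of the corollary, the hypothesis $\alpha \leq c_3/\log y$ forces $\sigma \leq c/\log y$, so Lemma~\ref{decr-fy}(ii) applies and the claim follows from the elementary inequality $(\tau \log x)^2/((1+\tau^2)y^2) \leq (\tau \log x / y)^2$, which converts the log factor in the lemma into the log factor claimed in the corollary. For part (ii) of the corollary, when $\sigma, \kappa$ lie in a middle range (which is the generic case in $\cD_\delta$), Lemma~\ref{decr-fy}(iii) provides $\Re(f_y(s,w) - f_y(\sigma,\kappa)) \leq -c_2(\tau^2 + t^2)\phi_2(\alpha,y)$; applying $\log(1+X) \leq X$ with $X = \tau^2 \phi_2(\alpha,y)/(y/\log y)$ (and similarly in $t$) recasts this in the desired logarithmic form. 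The degenerate cases where $\alpha$ is either very small or very close to $1$ are handled separately: in the former range one appeals to Lemma~\ref{decr-fy}(i) at genuinely small $\sigma,\kappa,\tau,t$; in the latter, one uses that $\phi_2(\alpha,y) \ll y/\log y$, so the target is already weaker than what the Euler product yields directly.

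Part (i) of the corollary is the principal difficulty, because the target involves the saturation function $*^2/((1-\alpha)^2 + *^2)$ which is not directly produced by any single part of Lemma~\ref{decr-fy}. I plan to split by the size of $\max\{|\tau|,|t|\}$. In the sub-region $1/\log y \ll \max\{|\tau|,|t|\} \leq c_3/\log y$, I invoke part (ii) already proved and verify by direct comparison that it dominates the target of (i): using the estimates $\phi_2(\alpha,y) \asymp (\log x)^2/\ubar$, $(1-\alpha) \log y \ll \log \ubar$, and $\ubar \leq y/\log y$, one shows $\phi_2(\alpha,y) \gg \ubar/((1-\alpha)^2 + \tau^2)$ on this range, which transmutes the log bound into the exponential bound claimed. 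In the sub-region $\max\{|\tau|,|t|\} > c_3/\log y$, the symmetry $F_y(s,w) = F_y(w,s)$ reduces to treating large $|\tau|$ only. When $\alpha \leq c_1/\log y$, Lemma~\ref{decr-fy}(ii) applies and its logarithmic output dominates $\exp(-c\ubar)$ once $|\tau| \gg y/\log x$, with intermediate $|\tau|$ absorbed by Lemma~\ref{decr-fy}(i) applied to the small-prime contribution and a trivial bound on the rest; when $\alpha$ is not small, a direct Euler-product computation (exploiting that a positive proportion of $p \leq y$ have $|\tau \log p|$ bounded away from $2\pi \bfZ$) yields the required exponential decay.

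The main obstacle lies in aligning the saturation threshold $|\tau|, |t| \asymp 1 - \alpha$ in the target of (i) with the outputs of Lemma~\ref{decr-fy}: none of its three parts naturally produces the function $\tau^2/((1-\alpha)^2 + \tau^2)$, so several pieces must be stitched together across regimes. The key tools enabling this are the symmetry of $F_y$ in $(s,w)$, which decouples the joint dependence on $\tau$ and $t$, and the positivity of $f_y$ at real arguments (Lemma~\ref{lemme-props-fy}(i)), which allows one to forget variation in the variable not currently being controlled and absorb the resulting slack into a bounded factor.
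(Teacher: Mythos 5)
Your reductions for parts (ii) and (iii), and your derivation of part (i) in the narrow band $1/\log y\ll\max\{|\tau|,|t|\}\leq c_3/\log y$ from part (ii), are essentially sound (modulo the large-$\alpha$ case of (ii), where ``the Euler product yields it directly'' is not an argument; the paper handles $\alpha\geq 0.55$ by showing $F_y(s,w)=\zeta(s,y)^{1/2}\zeta(w,y)^{1/2}\e^{O(1)}$ and quoting Lemma~8 of~\cite{HT86}). The genuine gap is in part (i) for $\max\{|\tau|,|t|\}>c_3/\log y$. In the case $\alpha\leq c_1/\log y$ your plan is: use Lemma~\ref{decr-fy}(ii) when $|\tau|\gg y/\log x$, and in between apply Lemma~\ref{decr-fy}(i) ``to the small-prime contribution and a trivial bound on the rest''. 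The trivial bound (dropping the primes with $|\tau|\log p\gg 1$ by positivity) is legitimate, but it throws away exactly the primes that produce the decay. Concretely, take $y=\epsilon_0\log x$ with $\epsilon_0$ a small constant and $|\tau|$ a small constant below the threshold at which the $O(1)$ in Lemma~\ref{decr-fy}(ii) is absorbed: then the target exponent is $\asymp\ubar\,\tau^2/((1-\alpha)^2+\tau^2)\asymp y/\log y$, whereas Lemma~\ref{decr-fy}(i) restricted to $p\leq y_1=\e^{c_1/|\tau|}$ (forced by its hypothesis $\sigma+|\tau|\leq c_1/\log y_1$) yields at most $\asymp(y_1/\log y_1)\log(1+(\tau/\sigma)^2)\ll\log\log y$. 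So the stitched bound falls short by a factor of order $y/(\log y\log\log y)$, and no choice of constants repairs this: in this range the decay of $|F_y(s,w)|$ comes from the bulk of the primes, for which $|\tau|\log p$ is large. Your other branch (``$\alpha$ not small: a direct Euler-product computation, a positive proportion of $p\leq y$ with $\tau\log p$ away from $2\pi\bfZ$'') is also not right as stated: the relevant sum is weighted by $p^{-\alpha}$ and dominated by $p$ near $y$, and when $\tau\log y$ is close to a multiple of $2\pi$ with $|\tau|$ small the correct lower bound is only the unsaturated $\ubar\,\tau^2/(1-\alpha)^2$, which a crude positive-proportion argument neither proves nor is consistent with; for large $|\tau|$ (up to $\exp\{(\log y)^{4/3}\}$) such equidistribution statements require strong prime number theorem input and are not a ``direct computation''.

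The mechanism you are missing, and which the paper uses, bypasses Lemma~\ref{decr-fy} entirely in this regime: by Lemma~\ref{props-h}(i) (positivity of the coefficients $d_{k,\ell}$ of $\Xi$, with $d_{1,0}=1/2$), one has, for $\alpha<0.55$,
\begin{equation*}
\Re\big\{f_y(s,w)-f_y(\sigma,\kappa)\big\}=-\sum_{k+\ell\geq1}d_{k,\ell}\sum_{p\leq y}\frac{1-\cos((k\tau+\ell t)\log p)}{p^{k\sigma+\ell\kappa}}\leq-\frac12\sum_{p\leq y}\frac{1-\cos(\tau\log p)}{p^{\sigma}},
\end{equation*}
keeping only the $(k,\ell)=(1,0)$ term (with $|t|\leq|\tau|$ assumed by symmetry). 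Since $p^{\sigma}\asymp p^{\alpha}$ on $\cD_\delta(\alpha;y)$, Lemma~8 of~\cite{HT86} gives the lower bound $\gg\ubar\,\tau^2/((1-\alpha)^2+\tau^2)$ for this prime sum, uniformly in the whole range $1/\log y\ll|\tau|\leq\exp\{(\log y)^{4/3}\}$, and the monotonicity of $\tau^2/((1-\alpha)^2+\tau^2)$ in $|\tau|$ converts the one-variable bound into the two-variable statement \eqref{majo-F-haut}. In other words, the saturation function you correctly identify as the difficulty is not to be reconstructed by patching the Taylor-regime estimates of Lemma~\ref{decr-fy}; it is inherited wholesale from the known estimate for $\zeta(s,y)/\zeta(\sigma,y)$ via the coefficient positivity of $\Xi$, which is precisely what makes $F_y$ comparable to $\zeta^{1/2}\zeta^{1/2}$ for this purpose.
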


\begin{proof}
First suppose~$\alpha\geq 0.55$. Then if~$\delta$ is sufficiently small, we have~$\sigma, \kappa\geq 0.54$. On the other hand, from~\eqref{def-xi} we see that~$\Xi(p^{-s}, p^{-w}) = -\frac12\log(1-p^{-s})-\frac12\log(1-p^{-w}) + O(p^{-2\sigma}+p^{-2\kappa})$, from which it follows that~$F_y(s, w)=\zeta(s, y)^{1/2}\zeta(w, y)^{1/2}\exp\{O(1)\}$. In this case, Corollary~\ref{decr-F} is a direct consequence of Lemma~8 of~\cite{HT86}.

Next let~$c_1$ be the constant in Lemma~\ref{decr-fy}.(ii), and suppose that~$|\tau|, |t|\leq c_1/(2\log y)$ and~$\alpha\leq c_1/(4\log y)$. If~$\delta$ is sufficiently small, this implies~$\sigma, \kappa\leq c_1/(2\log y)$. If moreover~$\delta$ is sufficiently small in terms of~$c_1$, then the conditions of Lemma~\ref{decr-fy}.(i) are fulfilled and for some constant~$c>0$, we have
\[ |F_y(s, w)| \leq F_y(\sigma, \kappa)\exp\Big\{ -c\frac{y}{\log y}\Big(\log\Big(1+\Big(\frac\tau\sigma\Big)^2\Big)+\log\Big(1+\Big(\frac t\kappa\Big)^2\Big)\Big)\Big\}. \]
Since under our hypotheses~$\sigma^2\asymp \alpha^2 \asymp y/(\phi_2(\alpha, y)\log y)$, and similarly for~$\kappa$, we have the required estimate

Assume next that~$\eta/(4\log y)<\alpha<0.55$, and~$|\tau|, |t|\leq c_1/(2\log y)$. In this case, assuming~$\delta$ is small enough, we deduce~$c_1/(5\log y)\leq \sigma\leq 0.6$ and similarly for~$\kappa$, so that the conditions of Lemma~\ref{decr-fy}.(iii) (with~$\ee<c_1/5$ being absolute) are satisfied: for some absolute constant~$c>0$, we have
\[ |F_y(s, w)| \leq F_y(\sigma, \kappa)\exp\Big\{-c\Big(\tau^2\phi_2(\sigma, y)+t^2\phi_2(\kappa, y)\Big)\Big\}. \]
Note that~$\phi_2(\sigma, y)\asymp\phi_2(\alpha, y)$ and similarly for~$\kappa$. Furthermore, we have under the current hypotheses~$\tau^2\phi_2(\alpha, y)\log y/y \ll u(\log y)/y \ll 1$. Therefore
\[ \frac{y}{\log y}\log\Big(1+\frac{\tau^2\phi_2(\alpha, y)}{y/(\log y)}\Big) \asymp \tau^2\phi_2(\alpha, y) \]
and similarly for~$t$. This yields the required estimate

Suppose next that~$\alpha<0.55$,~$|t|\leq |\tau|$ and~$|\tau|\gg 1/\log y$ (as we may without loss of generality). Then from part~(i) of Lemma~\ref{props-h} we deduce
\[ \Re\big\{f_y(s, w) - f_y(\sigma, \kappa)\big\} = -\sum_{k+\ell\geq 1}d_{k\ell}\sum_{p\leq y}\frac{1-\cos((k\tau+\ell t)\log p)}{p^{k\sigma+\ell\kappa}} \leq -\sum_{p\leq y}\frac{1-\cos(\tau\log p)}{2p^\sigma} \]
dropping all but one term by positivity. Note that~$p^\sigma \asymp p^\alpha$. It follows from Lemma~8 of \cite{HT86} that for some~$c>0$, we have
\[ \sum_{p\leq y}\frac{1-\cos(\tau\log p)}{p^\alpha} \gg \frac{c\ubar\tau^2}{(1-\alpha)^2+\tau^2} .\]
(Note that the condition~$|\tau|\geq 1/(\log y)$ in the statement of~\cite{HT86} may be relaxed to~$|\tau|\gg 1/\log y$ without changing the proof). Since the fraction in the right-hand side is an increasing function of~$|\tau|$ and~$|\tau|\geq |t|$, we obtain the required result.

Finally, if~$\alpha\leq c_3/\log y$ and~$|\tau|\leq \e^{(\log y)^{3/2-\ee}}$, exponentiating the upper bound of Lemma~\ref{decr-fy}.(iii) immediately yields the desired result.
\end{proof}

We shall use the first estimate of the previous corollary in the form of the following bounds.
\begin{coro}\label{decr-F-int}
Suppose~$(\sigma, \kappa)\in\cD_{\delta}(\alpha ; y)$ for some sufficiently small~$\delta\geq 0$. Then the following assertions hold.
\begin{enumerate}[(i)]
\item For all~$1\leq X\leq \exp\{(\log y)^{4/3}\}$ and $\lambda \in\bfR$,
\[ \int_{-X}^{X}|F_y(\sigma + i\lambda\tau, \kappa+i\tau)|\dd\tau \ll F_y(\sigma, \kappa)\big\{ 1 + X\e^{-c_5\ubar}\big\}. \]
(By symmetry the same bound holds for the analogous integral over~$t$).
\item For all~$0<\delta\leq 1$, $(\mu_1, \mu_2)\in(0, 2]^2$, $(\lambda_1, \lambda_2)\in\bfR^2$ and~$5\leq X\leq \exp\{(\log y)^{4/3}\}$, we have
\begin{equation}
\begin{aligned}
\iint_{(\ast)} |F_y(\sigma+&i(\tau+\lambda_2t), \kappa+i(\tau+\lambda_1t))|\frac{\dd\tau}{\mu_1+|\tau|}\frac{\dd t}{\mu_2+|t|} \\
&\ \ll_\delta F_y(\sigma, \kappa)\log(X/(\mu_1\mu_2))^2 H(\ubar)^{-c_5\delta^2}(\log x)^{-1},
\end{aligned}\label{majo-F-int-2}
\end{equation}
\[ \text{where the integration domain is }(\ast) = \left\{(\tau, t) : \begin{array}{l}\max\{|\tau|, |t|\}\leq X, \\ \max\{|\tau+\lambda_1t|, |\tau+\lambda_2t|\}\geq \delta/\log y \end{array} \right\}.\]
\end{enumerate}
\end{coro}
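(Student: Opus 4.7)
My plan is to derive both bounds directly from the pointwise decay estimates of Corollary~\ref{decr-F}, combined with the elementary integrals recalled in~\eqref{HT-p281} and straightforward integration of the weight functions. The overall strategy is that the integrand~$|F_y|$ decays sufficiently fast away from the "diagonal'' that the Dirichlet series can be estimated pointwise.

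For part~(i), I partition~$[-X, X]$ into three nested ranges according to the size of~$|\tau|$: a central range~$|\tau|\leq c_3/\log y$, an intermediate range~$c_3/\log y \leq |\tau| \leq 1$, and a tail~$1 \leq |\tau|\leq X$. On the central range, Corollary~\ref{decr-F}(ii), applied with the shifts~$\lambda\tau$ and~$\tau$ (after replacing~$c_3$ by~$c_3/\max\{1,|\lambda|\}$ so that the hypothesis holds), yields a decay of the form~$\bigl(1+c\tau^2\phi_2(\alpha,y)/(y/\log y)\bigr)^{-cy/\log y}$; integrating this against~$\dd\tau$ using the second bound of~\eqref{HT-p281} gives a contribution of order~$F_y(\sigma,\kappa)/\sqrt{\phi_2(\alpha,y)} = O(F_y(\sigma,\kappa))$. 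On the intermediate range, Corollary~\ref{decr-F}(i) produces a Gaussian-type decay $\exp\{-c\ubar \tau^2/((1-\alpha)^2+\tau^2)\}\leq \exp\{-c'\ubar\tau^2/(1-\alpha)^2\}$, yielding a further~$O(F_y(\sigma,\kappa))$ contribution. Finally, on the tail range the ratio~$\tau^2/((1-\alpha)^2+\tau^2)$ is bounded below by an absolute constant, so~$|F_y(s,w)|\ll F_y(\sigma,\kappa)\e^{-c_5\ubar}$, contributing~$O(F_y(\sigma,\kappa)\,X\e^{-c_5\ubar})$. Summing the three contributions gives~(i).

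For part~(ii), the defining condition of~$(\ast)$ guarantees that the larger of the two imaginary parts of the evaluation points of~$F_y$ is~$\geq \delta/\log y$, which is precisely the threshold needed to invoke the bound~\eqref{majo-F-haut}. Combining this with the estimate~$(1-\alpha)\log y \ll \log\ubar$ (recalled in Section~\ref{section-sigma}), one obtains a \emph{uniform} pointwise decay factor~$\exp\{-c\delta^2\ubar/(\log\ubar)^2\}$ valid on all of~$(\ast)$; this is then absorbed into the target factor~$H(\ubar)^{-c_5\delta^2}(\log x)^{-1}$, using the slack available in the range~$(G_\ee)$, where~$\ubar\gg(\log\log x)^{1+\ee}$ allows an arbitrary negative power of~$\log x$ to be peeled off from the exponential. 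Once this uniform factor is extracted, the integrations over~$\tau$ and~$t$ decouple and are bounded by the trivial estimates~$\int_{-X}^X \dd\tau/(\mu_i+|\tau|)\ll \log(X/\mu_i)$, producing the remaining factor~$\log(X/(\mu_1\mu_2))^2$.

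The main obstacle I foresee is in part~(ii), namely, verifying that the pointwise decay is genuinely uniform across~$(\ast)$ and, in particular, non-degenerate. When the two linear forms~$\tau+\lambda_1t$ and~$\tau+\lambda_2t$ are nearly aligned (so the change of variables~$(\tau,t)\mapsto (\tau+\lambda_1 t,\tau+\lambda_2 t)$ has small Jacobian), or when one of~$|\tau|,|t|$ is close to~$X$ while the other is very small, one must confirm that the larger of the two shifts is always at least~$\delta/\log y$ and that the bound from~\eqref{majo-F-haut} (rather than~\eqref{majo-F-bas} or~\eqref{eq:majo-F-mid}) is the appropriate tool. A clean way around this is to first cut out the low-frequency region~$\max\{|\tau|,|t|\}\leq c/\log y$ from~$(\ast)$, treat it by Corollary~\ref{decr-F}(ii), and handle the complementary region by a dyadic shell decomposition in the coordinate~$\tau+\lambda_i t$ whose modulus is larger.
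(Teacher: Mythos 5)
Your treatment of part~(i) is correct and is essentially the paper's argument: the paper simply bounds the contribution of $|\tau|\leq 1$ trivially by $O(F_y(\sigma,\kappa))$ (the interval has bounded length and $|F_y(s,w)|\leq F_y(\sigma,\kappa)$ by Lemma~\ref{lemme-props-fy}.(i)) and applies Corollary~\ref{decr-F}.(i) for $|\tau|\geq 1$; your finer three-range decomposition adds nothing essential. (Minor slip: the inequality $\exp\{-c\ubar\tau^2/((1-\alpha)^2+\tau^2)\}\leq\exp\{-c'\ubar\tau^2/(1-\alpha)^2\}$ goes the wrong way when $|\tau|>1-\alpha$; this is harmless because the trivial bound already suffices on that range.)

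Part~(ii), however, has a genuine gap. Your argument rests on the claim that $(G_\ee)$ gives $\ubar\gg(\log\log x)^{1+\ee}$, so that the uniform factor $\exp\{-c\delta^2\ubar/(\log\ubar)^2\}=H(\ubar)^{-c\delta^2}$ extracted from~\eqref{majo-F-haut} can absorb the required $(\log x)^{-1}$. But $(G_\ee)$ only forces $u\geq(\log\log x)^{1+\ee}$; since $\ubar=\min\{u,\pi(y)\}$ and $y\geq 2$ may be arbitrarily small (even bounded), $\ubar=\pi(y)$ can be bounded, in which case $H(\ubar)^{-c\delta^2}\asymp 1$ and no power of $\log x$ can be peeled off. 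In that regime~\eqref{majo-F-haut} saturates at $\e^{-c\ubar}=O(1)$, and~\eqref{majo-F-bas} is only available on the small box $\max\{|\tau|,|t|\}\leq c_3/\log y$, so your proposed dyadic-shell workaround cannot rescue the argument: with these two tools alone the ratio $|F_y(s,w)|/F_y(\sigma,\kappa)$ is never shown to be smaller than a constant on most of $(\ast)$. The paper deals with exactly this by a case distinction: for $y>(\log\log x)^2$ it argues as you do (there $\ubar\gg(\log\log x)^{1+\ee}$ does hold, so $\log x\ll_{\delta,\ee}H(\ubar)^{c\delta^2/2}$), while for $y\leq(\log\log x)^2$ it invokes Corollary~\ref{decr-F}.(iii), i.e.\ the bound~\eqref{eq:majo-F-mid}, whose decay factor $\bigl(1+(\tau\log x)^2/((1+\tau^2)y^2)\bigr)^{-c_4(\ee)y/\log y}$ contains $\log x$ explicitly and hence yields, for the shifted variable of modulus $\geq\delta/\log y$, a saving which is a (fixed) power of $\log x$ — enough to produce the factor $(\log x)^{-1}$. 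You mention~\eqref{eq:majo-F-mid} only to set it aside; it is in fact indispensable in the small-$y$ range, and without it the stated bound is not proved.
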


\begin{proof}
The proof is very similar to the calculations of~\cite[pages~277 and~279]{HT86}. We only sketch the details. 

Part~(i) follows in a straightforward way from bounding trivially by~$O(F_y(\sigma, \kappa))$ the contribution to the integral of~$|\tau|\leq 1$, and bounding by~$O(X\e^{-c\ubar}F_y(\sigma, \kappa))$, for some~$c>0$, the contribution of~$|\tau|\geq 1$ using Corollary~\ref{decr-F}.(i).

Regarding part~(ii), first note that~$\log x = H(\ubar)^{o(1)}$ as~$\ubar\to\infty$ as soon as the two conditions~$y\geq (\log\log x)^{1+\ee}$ and~$u\leq (\log\log y)^{1+\ee}$. The second was assumed from the outset of our argument. Let us first assume that~$y> (\log \log x)^2$. Then Corollary~\ref{decr-F}.(ii) yields
\begin{align*}
&\iint_{(\ast)} |F_y(\sigma+i(\tau+\lambda_1t), \kappa+i(\tau+\lambda_2t))|\frac{\dd\tau}{\mu_1+|\tau|}\frac{\dd t}{\mu_2+|t|} \\ \ll\ &F_y(\sigma, \kappa)\Bigg(\sup_{\nu\in\bfR}\int_{\delta/\log y}^X\exp\big\{-c\ubar\frac{\tau^2}{(1-\alpha)^2+\tau^2}\big\}\frac{\dd \tau}{\min\{\mu_1, \mu_2\}+|\tau-\nu|}\Bigg)^2
\end{align*}
for some~$c>0$. Since for any~$|\tau|\geq \delta/\log y$, we have
\[ \frac{\tau^2}{(1-\alpha)^2+\tau^2}\geq \frac{\delta^2}{\delta^2+((1-\alpha)\log y)^2} \gg \frac{\delta^2}{(\log\ubar)^2}, \]
we obtain the bound
\[ \ll F_y(\sigma, \kappa)H(\ubar)^{-c'\delta^2}\log(X/(\mu_1\mu_2))^2 \]
for some~$c'>0$. Since~$(\log x) \ll_{\delta,\ee} H(\ubar)^{c'\delta^2/2}$, the above is an acceptable error term.

If on the contrary~$y\leq (\log\log x)^2$, then by Corollary~\ref{decr-F}.(iii), we have
\begin{align*}
&\iint_{(\ast)} |F_y(\sigma+i(\tau+\lambda_1t), \kappa+i(\tau+\lambda_2t))|\frac{\dd\tau}{\mu_1+|\tau|}\frac{\dd t}{\mu_2+|t|} \\ \ll\ & \e^{O(y/\log y)}F_y(\sigma, \kappa)\Bigg(\sup_{\nu\in\bfR}\int_{\delta/\log y}^X
\big(1 + \frac{(\tau\log x)^2}{(1+\tau^2)y^2}\big)^{-c y/\log y}\frac{\dd \tau}{\min\{\mu_1, \mu_2\}+|\tau-\nu|}\Bigg)^2
\end{align*}
for some~$c>0$. We certainly have, for~$\tau\geq \delta/\log y$,
$$ \big(1 + \frac{(\tau\log x)^2}{(1+\tau^2)y^2}\big) \gg \delta^2 \log(u/y) \asymp \delta^2 \log x, $$
so that for some~$c'>0$, we have a bound
$$ \ll_\delta F_y(\sigma, \kappa) (\log x)^{-c' \delta^2 y/\log y} \log(X/(\mu_1\mu_2))^2. $$
This is clearly acceptable since~$y\leq (\log\log x)^2$.
\end{proof}

\subsection{The saddle-points}

Let~$2\leq y\leq x$, and~$\gamma =  v\cu$ with~$v\in\bfR$, $|v|\leq(\log x)/\cu$. We are interested in the properties of the pair of positive absciss\ae{} satisfying
\[ (\beta_1, \beta_2) = \underset{(\sigma, \kappa)\in(\bfR_+^*)^2}{\argmin\ }\big( x^{(\sigma + \kappa)/2}\e^{\gamma(w-s)}F_y(\sigma, \kappa)\big). \]
This pair will be more easily dealt with if defined by extrapolation from the case~$v=0$. We let
\[ \beta : (-v_0, v_0) \to \bfR \]
be the maximal solutions (here~$v_0\in\bfR_+^*\cup\{\infty\}$) to the differential equation
\begin{equation}\label{def-betas}
\beta'(v) = \cu\frac{[\partial_{02}f_y+ \partial_{11}f_y](\beta(v), \beta(-v))}{\hess[f_y](\beta(v), \beta(-v))} \end{equation}
satisfying the initial condition
\[ \beta(0) = \alpha(x, y). \]
\begin{lemme}\label{props-beta}
For all~$|v|<v_0$, the couple~$(\beta(v), \beta(-v))$ satisfies the saddle-point equation
\begin{equation}\label{systeme-pt-col}
\left\{\begin{aligned} \partial_{10}f_y(\beta(v), \beta(-v)) + \frac{\log x}2 - v\cu = 0, \\ \partial_{01}f_y(\beta(v), \beta(-v)) + \frac{\log x}2 + v\cu = 0. \end{aligned}\right. \end{equation}
Moreover, the function~$\beta$ is defined in the interval
\begin{equation}\label{def-cV}
\cV = \cV(x, y) = [-c_6\sqrt{\ubar}, c_6\sqrt{\ubar}] ,
\end{equation}
and for~$v\geq0, v\in\cV$, we have
\[ (\beta(v), \beta(-v))\in\cD_{\delta_0}(\alpha ; y), \qquad 0\leq \beta(v)-\beta(-v) \asymp \frac{v}{\sqrt{\sigma_2}} \]
where~$\delta_0>0$ is an absolute constant such that Lemma~\ref{lemme-props-fy}.(iii)-(iv) hold.
\end{lemme}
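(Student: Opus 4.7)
The proof rests on three steps. First, I would verify that the saddle-point system~\eqref{systeme-pt-col} is satisfied at the starting point $v=0$. The key observation is the identity~$F_y(s,s)=\zeta(s,y)$, which holds because for every~$m$ with~$P(m)\leq y$, grouping pairs~$(n,d)$ by their product yields~$\sum_{nd=m}1/\tau(nd)=1$. Hence~$f_y(s,s)=\phi_0(s,y)$, and differentiating at~$s=\alpha$ yields~$(\partial_{10}f_y+\partial_{01}f_y)(\alpha,\alpha)=\phi_1(\alpha,y)=\sigma_1=-\log x$. Combined with the symmetry~$\partial_{10}f_y(\sigma,\sigma)=\partial_{01}f_y(\sigma,\sigma)$ inherited from~$f_y$, this gives~$\partial_{10}f_y(\alpha,\alpha)=\partial_{01}f_y(\alpha,\alpha)=-\tfrac12\log x$, so~\eqref{systeme-pt-col} is satisfied at~$v=0$ with~$\beta(0)=\alpha$.

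To propagate the saddle-point equations to every~$v$ in the domain of~$\beta$, introduce
\[
G_1(v):=\partial_{10}f_y(\beta(v),\beta(-v))+\tfrac12\log x-v\cu,\qquad G_2(v):=\partial_{01}f_y(\beta(v),\beta(-v))+\tfrac12\log x+v\cu,
\]
both of which vanish at~$v=0$ by step one. Differentiating~$G_1$, substituting the ODE~\eqref{def-betas} for~$\beta'(v)$, and using the symmetry relations $\partial_{02}f_y(b,a)=\partial_{20}f_y(a,b)$, $\partial_{11}f_y(b,a)=\partial_{11}f_y(a,b)$ and $\hess[f_y](b,a)=\hess[f_y](a,b)$ to rewrite~$\beta'(-v)$ at the point~$(\beta(v),\beta(-v))$, the algebra collapses to
\[
G_1'(v)=\frac{\cu}{\hess[f_y]}\bigl((\partial_{20}f_y)(\partial_{02}f_y)-(\partial_{11}f_y)^2\bigr)-\cu=0,
\]
and the same manipulation gives~$G_2'\equiv 0$. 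Both quantities therefore remain identically zero, establishing~\eqref{systeme-pt-col} on the entire domain of~$\beta$.

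The last and most delicate step is a bootstrap controlling this domain together with the size of~$\beta(v)-\beta(-v)$. By Lemma~\ref{lemme-props-fy}.(ii) and the symmetry of~$f_y$, the right-hand side of~\eqref{def-betas} is strictly positive on~$(\bfR_+^*)^2$, so~$\beta$ is strictly increasing and~$\beta(v)>\alpha>\beta(-v)$ for~$v>0$. Suppose by continuity that~$(\beta(v),\beta(-v))\in\cD_{\delta_0/2}(\alpha;y)$ on some interval~$[0,v_1]$. Lemma~\ref{lemme-props-fy}.(iv) combined with~$\cu\asymp\sqrt{\sigma_2}$ then yields~$\beta'(v)\asymp\cu/\sigma_2\asymp 1/\sqrt{\sigma_2}$ there; integrating and exploiting the symmetry in~$\pm v$ gives~$\beta(v)-\beta(-v)\asymp v/\sqrt{\sigma_2}$. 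Using the a priori estimates~$\sqrt{\sigma_2}\asymp(\log x)/\sqrt{\ubar}$ and~$\sqrt{\sigma_2}\,\alpha\gg\sqrt{\ubar}$, for~$v\leq c_6\sqrt{\ubar}$ one finds~$|\beta(v)-\beta(-v)|\log y\ll c_6\ubar/u\leq c_6$ and~$|\beta(v)-\beta(-v)|/\alpha\ll c_6$, and a Taylor expansion of~$\sigma\mapsto 1-2^{-\sigma}$ controls the remaining condition defining~$\cD_{\delta_0}$. Choosing~$c_6$ small enough closes the bootstrap and extends~$\beta$ over all of~$\cV$. The main obstacle I anticipate is precisely this bootstrap: it relies crucially on the clean asymptotic forms~$\partial_{20}f_y+\partial_{11}f_y\sim\sigma_2/2$ and~$\hess[f_y]\sim\sigma_2\cu^2$ provided by Lemma~\ref{lemme-props-fy}.(iv), and on a careful calibration of~$c_6$ to keep~$(\beta(v),\beta(-v))$ inside~$\cD_{\delta_0}$ uniformly across the whole interval~$\cV$.
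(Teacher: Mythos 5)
Your proposal is correct and follows essentially the same route as the paper: verify the system at $v=0$ (your identity $F_y(s,s)=\zeta(s,y)$ is an equivalent, slightly more direct way of getting $\partial_{10}f_y(\sigma,\sigma)=\partial_{01}f_y(\sigma,\sigma)=\tfrac12\phi_1(\sigma,y)$ than the paper's remark that $g$ vanishes to order two on the diagonal), propagate by differentiating and using the ODE~\eqref{def-betas} together with the symmetry of $f_y$, and then control the domain by a continuity argument based on Lemma~\ref{lemme-props-fy}.(iv), the bound $\beta'\asymp 1/\sqrt{\sigma_2}$, and the a priori estimates $\alpha\sqrt{\sigma_2}\gg\sqrt{\ubar}$ and $1/\sqrt{\sigma_2}\asymp\sqrt{\ubar}/(u\log y)$. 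Your bootstrap on $\cD_{\delta_0/2}$ with a small $c_6$ is just a reorganization of the paper's stopping-time $v_m$ argument (with its Cauchy--Lipschitz continuation), and the quantitative checks you give match the paper's.
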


\begin{proof}
That~$(\beta(v), \beta(-v))$ satisfies the saddle-point equation can be seen by differentiating the system~\eqref{systeme-pt-col} with respect to~$v$, granted that it is satisfied at~$v=0$. To check this last fact, we remark that from the definitions~\eqref{def-fy}, \eqref{definition-h} and Lemma~\ref{props-h}.(ii) (more specifically, using the fact that~$g((1-\bx)/(1-\by))$ vanishes at order~$2$ when~$\bx=\by$), we have
\[ \partial_{10}f_y(\sigma, \sigma) = \partial_{01} f_y(\sigma, \sigma) = \frac12\phi_1(\sigma, y) \qquad (\sigma>0).\]
Then, when~$v=0$, both equations in~\eqref{systeme-pt-col} reduce to the definition of~$\alpha(x, y)$.

From Lemma~\ref{lemme-props-fy}.(ii), we have that~$\beta'(v)>0$ for~$|v|<v_0$. Let~$\delta_0>0$ be, as in the statement, an absolute constant such that Lemma~\ref{lemme-props-fy}.(iii)-(iv) hold, and let
\begin{equation}\label{def-vm}
v_m := \inf\big\{v\in(0, v_0) : \ \beta(v)-\beta(-v) = \frac{c_{7}\ubar}{u\log y}\text{ or } \frac{2^{-\beta(-v)}-2^{-\beta(v)}}{1-2^{-\beta(-v)}} = \delta_0 \big\} > 0
\end{equation}
where~$c_{7}>0$ is absolute and such that~$0<c_{7}\leq\delta_0$ and~$\alpha\geq 2c_{7}\ubar/(u\log y)$. Since~$\ubar\leq u$, we have
\[ (\beta(v), \beta(-v)) \in \cD_{\delta_0}(\alpha ; y), \qquad (0\leq v<v_m) .\]
In particular,~$\beta_1, \beta_2$ and their derivatives have a limit at~$v=v_m$ and the theorem of Cauchy--Lipschitz yields~$v_m<v_0$. Lemma~\ref{lemme-props-fy}.(iv) ensures that for~$0\leq v\leq v_m$ we have~$\beta'(v)\asymp (\log x)/(\cu\sigma_2) \asymp 1/\sqrt{\sigma_2}$. It follows that
\begin{equation}
\beta(v)-\beta(-v) \asymp \frac{v}{\sqrt{\sigma_2}} \quad (0\leq v\leq v_m).\label{estim-diff-beta}
\end{equation}
We claim that~$v_m\gg \sqrt{\ubar}$. Indeed, assume first that the limiting condition in~\eqref{def-vm} is~$\beta(v_m)-\beta(-v_m) = c_{7}\ubar/(u\log y)$. Then from~\eqref{estim-diff-beta} it follows that~$v_m/\sqrt{\ubar}\gg 1$. If on the contrary the limiting condition in~\eqref{def-vm} is~$(2^{-\beta(-v_m)}-2^{-\beta(v_m)})/(1-2^{-\beta(-v_m)}) = \delta_0$, then we write this condition as~$f(v_m)=0$, where~$f(v) := \delta_0 + 2^{-\beta(v)} - (1+\delta_0)2^{-\beta(-v)} \quad (0\leq v\leq v_m)$.
We have
\[ f(0) = \delta_0(1-2^{-\alpha}) \gg \alpha, \qquad f'(v) \ll |\beta'(v)| \ll 1/\sqrt{\sigma_2} \quad (0\leq v\leq v_m) \]
from which we deduce that~$v_m \gg \alpha \sqrt{\sigma_2} \gg \sqrt{\ubar}$. This proves the lemma.
\end{proof}

The next lemma describes more precisely the variations of~$\beta$ and of the quantities~$x^{(\beta_1+\beta_2)/2}\e^{\gamma(\beta_2-\beta_1)}F_y(\beta_1,\beta_2)$ and~$\hess[f_y](\beta_1, \beta_2)$ (where~$(\beta_1, \beta_2) = (\beta(v), \beta(-v))$) with respect to~$v$.
\begin{lemme}\label{lemme-E}
Define for all~$2\leq y\leq x$ and~$v\in\cV$
\begin{equation}\label{def-E}
E(v) = E(v ; x, y) := \log(x^{(\beta_1+\beta_2)/2}\e^{\gamma(\beta_2-\beta_1)}F_y(\beta_1,\beta_2))  \qquad ((\beta_1, \beta_2) = (\beta(v), \beta(-v)).
\end{equation}
Then for some sequence of functions~$(b_j(x, y))_{j\geq 0}$ satisfying
$$ b_0(x, y) = -1/2, \qquad b_j(x, y) \ll_j (\ubar)^{-j}, $$
and for each fixed~$k\geq 1$, we have the Taylor expansion
\begin{equation}
\label{taylor-E}
E(v) = E(0) + \sum_{j=0}^{k-1} b_j(x, y) v^{2j+2} + O\Big(\frac{v^{2k+2}}{(\ubar)^k}\Big) \qquad (v\in\cV).
\end{equation}
Moreover, we have
\begin{equation}
\label{taylor-hess}
\hess[f_y](\beta_1(v), \beta_2(v)) = \hess[f_y](\alpha, \alpha)\Big\{1 + O\Big(\frac{v^2}{\ubar}\Big)\Big\},
\end{equation}
\begin{equation}
\label{taylor-beta}
\beta_1(v) = \alpha - \frac{v}{\cu} + O\Big(\frac{v^2}{u\log y}\Big).
\end{equation}
\end{lemme}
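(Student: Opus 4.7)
The argument hinges on a clean identity for~$E'(v)$. Differentiating~\eqref{def-E} and substituting the saddle-point equations~\eqref{systeme-pt-col} into the chain-rule expression $\partial_{10}f_y\cdot\beta'(v)-\partial_{01}f_y\cdot\beta'(-v)$, one finds that every term involving $\beta'(\pm v)$ cancels, leaving
\[ E'(v) \;=\; \cu\,\bigl(\beta(-v)-\beta(v)\bigr). \]
The symmetry $f_y(s,w)=f_y(w,s)$ (inherited from $\Xi(\bx,\by)=\Xi(\by,\bx)$), combined with the swap $(\beta(v),\beta(-v))\leftrightarrow(\beta(-v),\beta(v))$ under $v\mapsto-v$, forces $E$ to be even in~$v$. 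Taylor expanding $\beta(-t)-\beta(t)$ around~$t=0$ and integrating from~$0$ to~$v$ gives $E(v)-E(0)$ as an even power series in~$v$, with coefficients
\[ b_j \;=\; -\frac{2\cu\,\beta^{(2j+1)}(0)}{(2j+2)!}. \]

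To quantify these coefficients, I would Taylor expand the saddle-point equations themselves at~$v=0$. Differentiating $\partial_{10}f_y(\beta(v),\beta(-v))-v\cu+(\log x)/2=0$ repeatedly yields a linear recursion for $\beta^{(n)}(0)$. Writing $A=\partial_{20}f_y(\alpha,\alpha)$ and $B=\partial_{11}f_y(\alpha,\alpha)$, the leading coefficient is $A+B$ for $n$ even and $A-B$ for $n$ odd; by Lemma~\ref{lemme-props-fy}(iv) together with the identity $\hess[f_y](\alpha,\alpha)=(A+B)(A-B)$, these equal $\sigma_2/2$ and $2\cu^2$ respectively, both of order $\sigma_2\asymp(u\log y)^2/\ubar$. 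The remaining forcing terms are polynomial combinations of $\partial_{k\ell}f_y(\alpha,\alpha)$ with lower $\beta^{(j)}(0)$, and the bounds $\partial_{k\ell}f_y(\alpha,\alpha)\ll_{k,\ell}|\phi_{k+\ell}(\alpha,y)|\asymp(u\log y)^{k+\ell}(\ubar)^{1-k-\ell}$ from Lemma~\ref{lemme-props-fy}(iii) drive a clean induction producing
\[ \beta^{(n)}(0)\;\ll_n\;\frac{(\ubar)^{1-n/2}}{\log x}\qquad(n\geq 1). \]
The base case gives $\beta'(0)=\cu/(A-B)=1/(2\cu)$ and hence $b_0=-1/2$; inserting the general bound yields $b_j\ll_j(\ubar)^{-j}$. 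For the remainder in~\eqref{taylor-E}, Taylor's theorem with integral remainder, combined with the same inductive estimate extended uniformly to $\beta^{(j)}(v)$ on~$\cV$ via the ODE~\eqref{def-betas} and Lemma~\ref{lemme-props-fy}(iii), gives the bound $O\bigl(v^{2k+2}/(\ubar)^k\bigr)$.

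The Hessian estimate~\eqref{taylor-hess} follows the same template with an additional parity cancellation: since $\hess[f_y](\sigma,\kappa)=\hess[f_y](\kappa,\sigma)$, the function $v\mapsto\hess[f_y](\beta(v),\beta(-v))$ is even in $v$, so the first-order correction around $(\alpha,\alpha)$ vanishes. Bounding the second-order Taylor remainder via $|\beta(\pm v)-\alpha|\asymp v/\cu$ and Lemma~\ref{lemme-props-fy}(iii) applied to the third and fourth partial derivatives of~$f_y$ then yields the relative error $O(v^2/\ubar)$. Finally,~\eqref{taylor-beta} follows from Taylor's theorem $\beta_1(v)=\alpha+\beta'(0)v+O(v^2\sup_{\cV}|\beta''|)$, using the explicit value of $\beta'(0)$ just computed and the $n=2$ instance of the bound above, which gives $\beta''(v)\ll 1/\log x$ on~$\cV$.

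The principal technical hurdle is propagating the derivative bounds on~$\beta$ and the Taylor remainder estimates uniformly over the whole interval~$\cV$ rather than merely near $v=0$. This rests on combining the uniform derivative bounds for~$f_y$ over~$\cD_{\delta_0}(\alpha;y)$ supplied by Lemma~\ref{lemme-props-fy}(iii) with the containment $(\beta(v),\beta(-v))\in\cD_{\delta_0}(\alpha;y)$ for all~$v\in\cV$ provided by Lemma~\ref{props-beta}.
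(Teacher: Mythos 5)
Your proposal is correct and follows essentially the same route as the paper: the identity $E'(v)=-\cu\,(\beta(v)-\beta(-v))$ coming from the saddle-point system, evenness of~$E$, and uniform bounds on the higher derivatives of~$\beta$ over~$\cV$ obtained by repeated implicit differentiation combined with Lemma~\ref{lemme-props-fy}(iii)--(iv) and the containment $(\beta(v),\beta(-v))\in\cD_{\delta_0}(\alpha;y)$ from Lemma~\ref{props-beta} --- the paper merely packages this bookkeeping in its~$\bfD_m$-classes instead of your recursion for~$\beta^{(n)}(0)$. One remark: your value $\beta'(0)=\cu/(A-B)=1/(2\cu)$ is the correct one (it is exactly what gives $E''(0)=-2\cu\beta'(0)=-1$ and $b_0=-1/2$), so your argument in fact yields $\beta_1(v)=\alpha+v/(2\cu)+O\big(v^2/(u\log y)\big)$; the sign and the factor in~\eqref{taylor-beta} as printed (and the~$v/\cu$ appearing in the paper's own proof) are slips, harmless in the sequel since only the even combination~$\beta_1+\beta_2$ is used there.
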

\begin{proof}
First we note that
\[ E(v) = f_y(\beta(v), \beta(-v)) + \frac{\beta(v) + \beta(-v)}2\log x - v(\beta(v)-\beta(-v)) \cu. \]
By the saddle-point property~\eqref{systeme-pt-col}, we have
\[ E'(v) = -\cu\big(\beta(v)-\beta(-v)\big) \qquad (v\in\cV). \]
We wish to differentiate this expression further. In order to simplify the presentation, we introduce the following temporary notation. For~$m\geq 2$, let~$\bfD_m$ be the set of linear combinations with coefficients independent of~$x$ and~$y$ of functions of the shape~$v\mapsto \partial_{k\ell}f_y(\beta_1(v), \beta_2(v))$ defined for~$v\in\cV$, where~$k+\ell=m$. We also denote by~$\bfD_{m_1}\cdots\bfD_{m_k}$ the set of products~$f_1\cdots f_k$ where for each~$j$, ~$f_j\in\bfD_{m_j}$~; and we write~$\bfD_m^r = \bfD_m\cdots\bfD_m$ ($r$ times). Using the shorthand
\[ \frakH : v\mapsto \hess[f_y](\beta(v), \beta(-v)), \quad \frakH \in\bfD_2^2 \]
we have from~\eqref{def-betas}
\begin{equation}
  \label{betap-D2}
  \beta' \in \Big( \frac{\cu}{\frakH}\Big)\cdot\bfD_2 .
\end{equation}
(which reads ``the function~$v\mapsto (\frakH(v)/\cu)\beta'(v)$ is in~$\bfD_2$'', \textit{etc.}) It follows that
\[ E'' \in \Big(\frac{\cu^2}{\frakH}\Big)\cdot\bfD_2. \]
By differentiating further with respect to~$v$, we obtain
\[ E''' \in \Big(\frac{\cu^3}{\frakH^3}\Big)\cdot\big(\bfD_2^3\bfD_3\big), \qquad E^{(4)} \in \Big(\frac{\cu^4}{\frakH^5}\Big)\cdot\big(\bfD_2^6\bfD_4 + \bfD_2^5\bfD_3^2\big). \]
More generally, an induction over~$j$ readily yields
$$ E^{(j)} \in \Big(\frac{\cu^j}{\frakH^{2j-3}}\Big)\cdot\big(\sum_{\substack{\sum m r_m = 7j-12 \\ \sum r_m = 3j-5}} \prod_{m} \bfD_m^{r_m}\big) \qquad (j\geq 2) $$
where the summation is over sequences of non-negative integers~$(r_m)_{m\geq 2}$ satisfying
$$ \sum_{m\geq 2} m r_m = 7j-12, \qquad \sum_{m\geq 2} r_m = 3j-5. $$
Recall that~$\sigma_m := |\phi_m(\alpha, y)|$. The definition of~$\cu$ and Lemma~\ref{lemme-props-fy} imply that for~$v\in\cV$,
\[ \cu \ll \sigma_2^2, \qquad \frakH(v) \asymp \sigma_2^2, \]
\[ \|f\|_{\infty}\ll_m |\sigma_m| \ll_m (u\log y)^m(\ubar)^{1-m} \qquad (m\geq 2,\quad f\in\bfD_m). \]
It follows that for all~$j\geq 2$,
\[ E^{(j)}(v)\ll_j \sigma_2^{6-7j/2} \sum_{\substack{\sum m r_m = 7j-12 \\ \sum r_m = 3j-5}} \prod_{m} \sigma_m^{r_m} \ll_j (\ubar)^{1-j/2}. \]
Since the function~$E$ is even, the estimate~\eqref{taylor-E} and the bound~$b_j(x, y)\ll (\ubar)^{-j}$ are a consequence of the Taylor formula, and there remains to compute~$b_0(x, y)$. Lemma~\ref{lemme-props-fy}.(iv) applied with the parameters~$(\sigma, \kappa)=(\alpha, \alpha)\in\cD_0(\alpha ; y)$ yields
\[ E''(0) = -2\cu\beta'(0) = -\frac{4\cu^2}{\sigma_2 - \tsigma_2/3} = -1 \]
by definition of~$\cu$. This proves that~$b_0(x, y)=-1/2$.

Estimate~\eqref{taylor-hess} follows on the same lines. Indeed, since~$\frakH\in\bfD_2^2$, we have
\[ \frakH' \in \Big(\frac{\cu}{\frakH}\Big)\cdot\big(\bfD_2^2\bfD_3\big), \qquad \frakH''\in\Big(\frac{\cu^2}{\frakH^3}\Big)\cdot\big(\bfD_2^5\bfD_4+\bfD_2^4\bfD_3^2\big). \]
We deduce~$\|\frakH''\|_\infty \ll \sigma_4+\sigma_3^2\sigma_2^{-1} \asymp \frakH(0)/\ubar$. Since~$\frakH$ is even,~$\frakH'(0) = 0$ and the estimate~\eqref{taylor-hess} follows from a Taylor formula at order~$2$.

Finally, from~\eqref{betap-D2} we obtain
\begin{equation}\label{betas-D3}
\beta'' \in \Big( \frac{\cu^2}{\frakH^3}\Big)\cdot\big(\bfD_2^3\bfD_3\big)  \end{equation}
so that a Taylor formula at order~$2$ yields
\[ \beta(v)  = \alpha + \frac{v}{\cu} + O\Big(\frac{v^2\sigma_3}{\sigma_2^2}\Big) =  \alpha + \frac{v}{\cu} + O\Big(\frac{v^2}{u\log y}\Big) \]
as claimed.
\end{proof}

\section{Proof of Theorem~\ref{thm-pcp}}

Let~$2\leq y\leq x$ be large numbers,~$v\geq0$ such that~$v\in\cV$ (which we recall was defined in~\eqref{def-cV}), and
\[ \gamma := v\cu \in [-(\log x)/2, (\log x)/2] \]
if the constant~$c_6$ in the definition of~$\cV$ was chosen small enough. Recall that~$\beta = \beta(v)$ is defined by~\eqref{def-betas}. By the definition~\eqref{def-D}, swapping the sums over~$n$ and~$d$, we recall that
\[ D(x, y ; \gamma) = \frac{S(x, y ; \gamma)}{\Psi(x, y)}, \]
where
\[ S(x, y ; \gamma) := \underset{\substack{P(nd)\leq y \\ n\e^{2\gamma} \leq d\leq x/n}}{\sum\sum} \frac1{\tau(nd)} .\]
Let
$$ \cR(v) = \cR(x, y ; v) := E(v) - E(0),  \qquad (v\in\cV), $$
where~$E(v)$ is the quantity defined in~\eqref{def-E}.
\begin{prop}\label{prop:key-prop}
Let~$v_m \in \cV$, $v_m\geq 1$ be fixed. Assume~$2\leq y\leq x$ and~$0\leq v\leq v_m$. Then we have
\begin{equation}
\frac{S(x, y ; \gamma)}{\Psi(x, y)} = \int_v^{v_m} \Big\{1 + O\Big(\frac{1+z^2}{\ubar}\Big)\Big\}\e^{\cR(z)}\frac{\dd z}{\sqrt{2\pi}} + O\Big(\frac{\e^{\cR(v_m)}}{v_m} + \frac{\e^{\cR(v)}}{\ubar}\Big). \label{eq:S-objectif}
\end{equation}
\end{prop}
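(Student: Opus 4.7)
The plan is to apply the two-variable saddle-point method to the double Perron integral representation of $S(x,y;\gamma)$, differentiate once with respect to $v$ to clear the singular factor $1/(s-w)$ that obstructs a direct evaluation, and then integrate back to recover $S(x,y;\gamma)/\Psi(x,y)$.

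I would begin from the double Perron formula~\eqref{perron2}; after the linear change of variables described in the introduction it becomes
\begin{equation*}
S(x,y;\gamma) = \frac{2}{(2\pi i)^2}\iint x^{(s+w)/2}\e^{\gamma(w-s)}F_y(s,w)\frac{\dd w\,\dd s}{(s-w)(s+w)}.
\end{equation*}
Since $\partial_v\e^{\gamma(w-s)} = \cu(w-s)\e^{\gamma(w-s)}$, formally differentiating in $v$ cancels the factor $(s-w)$ in the denominator and yields
\begin{equation*}
-\frac{\partial S}{\partial v}(x,y;v\cu) = \frac{2\cu}{(2\pi i)^2}\iint x^{(s+w)/2}\e^{\gamma(w-s)}F_y(s,w)\frac{\dd w\,\dd s}{s+w}.
\end{equation*}
To make this rigorous, I would first truncate both contours at $|\Im|\leq T$ with $T\asymp\exp\{(\log y)^{4/3}\}$ via Lemma~\ref{lemme-perron}, using Corollary~\ref{decr-F-int}(i) to bound the Perron error, and only then differentiate the truncated identity.

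Next, I would evaluate this truncated double integral by the saddle-point method, shifting both contours to pass through $(\beta_1,\beta_2)=(\beta(v),\beta(-v))$ from Lemma~\ref{props-beta}. The saddle-point equations~\eqref{systeme-pt-col} kill the linear terms in the Taylor expansion of $\log(\text{integrand})$ at $(\beta_1,\beta_2)$, and the quadratic part has positive-definite Hessian $\hess[f_y](\beta_1,\beta_2)$ by Lemma~\ref{lemme-props-fy}(ii). The bulk of the contribution comes from the box $\max\{|\tau|,|t|\}\ll 1/\log y$, where the Taylor expansion to order two is controlled by the derivative bounds of Lemma~\ref{lemme-props-fy}(iii) together with~\eqref{taylor-hess} and~\eqref{taylor-beta}, which relate the Hessian and the saddle-point coordinates to their values at $v=0$. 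The tails outside that box are absorbed into the error term by Corollaries~\ref{decr-F} and~\ref{decr-F-int}(ii); in particular, the latter, with its denominators $\mu_i+|\tau|$, is tailor-made to handle the residual singularity of $1/(s+w)$ near $s+w=0$. A direct Gaussian evaluation then gives
\begin{equation*}
-\frac{\partial S}{\partial v} = \frac{2\cu\,\e^{E(v)}}{(\beta_1+\beta_2)\cdot 2\pi\sqrt{\hess[f_y](\beta_1,\beta_2)}}\Big\{1+O\Big(\frac{1+v^2}{\ubar}\Big)\Big\}.
\end{equation*}

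Inserting the Hildebrand--Tenenbaum formula~\eqref{estim-psi-col} for $\Psi(x,y)$ and using the identities $F_y(\alpha,\alpha)=\zeta(\alpha,y)$ (by direct Euler product computation, since for each $m$ the $\tau(m)$ factorisations $m=nd$ exactly cancel the $1/\tau(nd)$), $\hess[f_y](\alpha,\alpha)=\cu^2\sigma_2$ (from Lemma~\ref{lemme-props-fy}(iv) and the definition~\eqref{def-cu} of $\cu$), and $\beta_1+\beta_2=2\alpha\{1+O(v^2/\ubar)\}$ (from~\eqref{taylor-beta}), the prefactors combine to give
\begin{equation*}
-\frac{1}{\Psi(x,y)}\frac{\partial S}{\partial v}(x,y;v\cu) = \frac{\e^{\cR(v)}}{\sqrt{2\pi}}\Big\{1+O\Big(\frac{1+v^2}{\ubar}\Big)\Big\}.
\end{equation*}
Integrating this identity in $z$ from $v$ to $v_m$ and bounding $S(x,y;v_m\cu)/\Psi(x,y)\ll \e^{\cR(v_m)}/v_m$ at the upper endpoint --- which follows from the \emph{same} saddle-point analysis applied directly to $S$, since at $v_m$ one has $\beta_1-\beta_2\asymp v_m/\sqrt{\sigma_2}$ by Lemma~\ref{props-beta} and so $|1/(s-w)|$ evaluated at the saddle contributes exactly the factor $1/v_m$ --- yields~\eqref{eq:S-objectif}.

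The main obstacle, I expect, will be the careful bookkeeping of the multiple error terms: the Perron truncation error at $T$, the tails outside the neighbourhood of the saddle point (especially those sensitive to the residual factor $1/(s+w)$), the Taylor remainder, and the relative error in the Hildebrand--Tenenbaum estimate~\eqref{estim-psi-col}. These must all combine into the clean bound $O((1+v^2)/\ubar)$ inside the integral, plus the additive boundary errors. The extra additive term $O(\e^{\cR(v)}/\ubar)$ in~\eqref{eq:S-objectif} is almost certainly the footprint of the $O(1/\ubar)$ relative error in~\eqref{estim-psi-col} surviving the division by $\Psi(x,y)$, uniformly in $v$; checking that no stray numerical constant appears in the leading $1/\sqrt{2\pi}$ is a delicate but purely computational step relying on the identities $F_y(\alpha,\alpha)=\zeta(\alpha,y)$ and $\hess[f_y](\alpha,\alpha)=\cu^2\sigma_2$ noted above.
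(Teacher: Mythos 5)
Your plan is, in outline, the paper's own argument: a doubly truncated Perron representation, differentiation in $v$ so that the factor $\cu(w-s)$ coming from $\e^{\gamma(w-s)}$ cancels the troublesome $1/(s-w)$, a two-variable saddle-point evaluation of the differentiated integral at $(\beta(v),\beta(-v))$ via \eqref{systeme-pt-col}, Lemma~\ref{lemme-props-fy} and \eqref{taylor-hess}--\eqref{taylor-beta}, a crude endpoint bound at $v_m$ in which $|s-w|\geq\beta_1-\beta_2\asymp v_m/\sqrt{\sigma_2}$ produces the factor $1/v_m$, and integration back from $v$ to $v_m$; your prefactor identities $F_y(\alpha,\alpha)=\zeta(\alpha,y)$ and $\hess[f_y](\alpha,\alpha)=\cu^2\sigma_2$ are exactly the ones the paper uses to land on $\e^{\cR(v)}/\sqrt{2\pi}$.

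There is, however, one genuine omission: the statement covers all $0\leq v\leq v_m$, and your error analysis of the \emph{un}differentiated truncated integral (where $1/(s-w)$ is still present) degenerates as $v\to0$. On the contours through $(\beta_1,\beta_2)$ one only has $|s-w|$ bounded below in terms of $\beta_1-\beta_2\asymp v/\sqrt{\sigma_2}$, and the tail bound you invoke (Corollary~\ref{decr-F-int}(ii) with $\mu_2=\beta_1-\beta_2$) carries a factor $\log(X/(\mu_1\mu_2))^2$ which is only affordable when $\log(1/v)\ll\log\log x$; at $v=0$ it is vacuous. The paper therefore first treats the range $0\leq v\leq v_1$ with $v_1=((\log x)\ubar\alpha\sqrt{\sigma_2})^{-1}$ separately, showing by Rankin's trick and the kernel $K$ that both sides of \eqref{eq:S-objectif} vary by only $O(\Psi(x,y)/\ubar)$ across that range, and then assumes $v\geq v_1$; your sketch needs this (or an equivalent device) to cover small $v$. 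Two smaller points of bookkeeping: one cannot literally ``differentiate the truncated identity'', since the Perron errors and the off-diagonal region come with $O$-terms --- the paper differentiates only the smooth integral $I_1$ over the box $|\tau\pm t/2|\leq T_1$ and keeps the Perron errors and the complementary region $I_2$ as additive errors bounded at the given $v$; and it is precisely these additive pieces (not the $O(1/\ubar)$ relative error in \eqref{estim-psi-col}, which is absorbed into the factor multiplying the integral) that account for the term $O(\e^{\cR(v)}/\ubar)$ in \eqref{eq:S-objectif}. With these adjustments your route coincides with the paper's.
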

\begin{proof}[Proof that Proposition~\ref{prop:key-prop} implies Theorems~\ref{thm-pcp} and~\ref{thm:pcp-2}]
Let~$v_m = v_{\text{max}} \asymp (\ubar)^{k/(2k+2)}$ be given. Recall that~$\cR_k(v)$ is defined by~\eqref{eq:def-Rk}. By Lemma~\ref{lemme-E}, we have
$$ \cR(z) = \cR_k(z) + O\Big(\frac{z^{2k+2}}{(\ubar)^k}\Big) \qquad (0\leq z \leq v_m). $$
The error term is absolutely bounded. Let
$$ I(v) := \int_v^{2v_m} \e^{\cR_k(z)}\frac{\dd z}{\sqrt{2\pi}}, \qquad (0\leq v\leq v_m). $$
Then it is easily verified that for~$0\leq v\leq v_m$,
$$ \e^{\cR(v)} \asymp (1+v)I(v), \qquad I(v_m) \ll \frac{1+v^{2k+2}}{(\ubar)^k}I(v), $$
$$ \int_v^{v_m} z^{\ell}\e^{\cR_k(z)}\dd z \ll (1 + v^{\ell})I(v) \qquad (\ell\in\{2, 2k+2\}). $$
We deduce that
$$ \int_v^{v_m}\Big\{1 + O\Big(\frac{1+z^2}{\ubar}\Big)\Big\} \e^{\cR(z)} \frac{\dd z}{\sqrt{2\pi}} =
\Big\{ 1 + O\Big(\frac{1+v^2}{\ubar} + \frac{v^{2k+2}}{(\ubar)^k}\Big)\Big\}I(v), $$
$$ \frac{\e^{\cR(v_m)}}{v_m} \ll \frac{1+v^{2k+2}}{(\ubar)^k}I(v). $$
This implies Theorem~\ref{thm:pcp-2}. Theorem~\ref{thm-pcp} follows by specialization at~$k=1$.
\end{proof}
We define
$$ \hT := \min\{H(u), \exp((\log y)^{5/4})\}. $$
Note that~$\alpha\sqrt{\sigma_2} \ll \hT^{o(1)}$ as~$\ubar\to\infty$.

\subsubsection*{Small values of~$v$}

Let
$$ v_1 := \frac1{(\log x)\ubar\alpha\sqrt{\sigma_2}} $$
and consider first the case when~$0\leq v\leq v_1$. Note that~$\log(1/v_1)\ll\log\log x$. The right-hand side of~\eqref{eq:S-objectif} varies by an amount at most~$O(\Psi(x, y)/\ubar)$. The left-hand side of~\eqref{eq:S-objectif} varies by at most
$$ \underset{\substack{P(nd)\leq y \\ nd\leq x \\ 1 \leq \tfrac dn \leq x^{2v_1}}}{\sum\sum}\frac1{\tau(nd)} $$
where we used the rough bound~$\cu\ll (\log x)^2$. By Rankin's trick and~\eqref{majo-perron-K} for~$z=d/n$ at the height~$T=(\ubar\alpha\sqrt{\sigma_2})^2\leq \exp((\log y)^{4/3})$ (for~$y$ large enough), the quantity above is bounded by
\begin{align*}
 \ll \ & \sum_{P(nd)\leq y}\frac1{\tau(nd)}\Big(\frac{x}{nd}\Big)^\alpha\int_{-1}^1 \Big(\frac{d}{n}\Big)^{i\tau/(v_1\log x)} K(\tau)\dd\tau \\
\leq\ & \frac{x^\alpha}{\ubar\alpha\sqrt{\sigma_2}}\int_{-\ubar\alpha\sqrt{\sigma_2}}^{\ubar\alpha\sqrt{\sigma_2}} |F_y(\alpha+i\tau, \alpha-i\tau)|\dd\tau.
\end{align*}
By~\eqref{estim-psi-col}, Corollary~\ref{decr-F-int}.(i) and the fact that~$F_y(\alpha,\alpha)=\zeta(\alpha, y)$, this is
\[ \ll \Psi(x, y)\alpha\sqrt{\sigma_2}\big\{\frac1{\ubar\alpha\sqrt{\sigma_2}} + \e^{-c_{8}\ubar}\big\} \ll \frac{\Psi(x, y)}{\ubar} .\]
Therefore, the estimate~\eqref{eq:S-objectif} for~$0\leq v\leq v_1$ is implied by the trivial case~$v=0$, and we can suppose from now on that~$v\geq v_1$.

\subsubsection*{Applying the Perron formula}

For all~$n\in S(x^{1/2}\e^{-\gamma}, y)$, we apply Lemma~\ref{lemme-perron} at the abscissa
$$ \sigma = \tfrac12(\beta(v) + \beta(-v)) $$
and height~$\hT$, with the choices
$$ x\gets x/n, \qquad a_d\gets \frac{\bfUn_{P(n) \leq y \text{ and } n\e^{2\gamma} \leq d}}{\tau(nd)}, $$
which yields
\begin{align*} \ssum{P(d) \leq y \\ n\e^{2\gamma}\leq d\leq x/n }\frac1{\tau(nd)} =&\
\frac1{2\pi i}\int_{\sigma-i\hT}^{\sigma+i\hT}\ssum{P(d)\leq y \\ n\e^{2\gamma} \leq d}\frac{x^s}{\tau(nd)(nd)^s}\frac{\dd s} s
+ O\Big(\frac1{\sqrt{\hT}}\ssum{P(d)\leq y \\ n\e^{2\gamma}\leq d} \frac{x^{\sigma}}{\tau(nd)(nd)^{\sigma}}\Big) \\
&\ + O\Big(\frac1{\sqrt{\hT}}\int_{-\sqrt{\hT}}^{\sqrt{\hT}} 
\Big(\ssum{P(d)\leq y \\  n\e^{2\gamma}\leq d} \frac{x^{\sigma+i\tau}}{\tau(nd)(nd)^{\sigma+i\tau}}\Big)K(\tau/\sqrt{\hT})\dd \tau \Big)
.\end{align*}
Let~$\kappa := \beta(v) - \beta(-v)>0$. We sum the previous estimate over all~$y$-friable~$d\geq 1$. By using Rankin's trick~$\bfUn_{n\e^{2\gamma} \leq d} \leq \e^{-\kappa\gamma}(d/n)^{\kappa/2}$ on the error terms, we obtain
\begin{align*}
S(x, y ; \gamma) =&\ \frac1{2\pi i}\int_{\sigma-i\hT}^{\sigma+i\hT} \sum_{P(d)\leq y} \sum_{n\in S(d\e^{-2\gamma}, y)} \frac{x^s}{\tau(nd)(nd)^s}
\frac{\dd s} s + O\Big(\frac{x^{(\beta_1+\beta_2)/2} \e^{\gamma(\beta_1-\beta_1)} F_y(\beta_1, \beta_2)}{\sqrt{\hT}}\Big) \\
&\ + O\Big(\frac{x^{(\beta_1+\beta_2)/2}}{\sqrt{\hT}}\int_{-\sqrt{\hT}}^{\sqrt{\hT}}F_y(\beta_1+i\tau, \beta_2+i\tau)K(\tau/\sqrt{\hT})\dd\tau\Big)
\end{align*}
where, here and in what follows, we abbreviate
$$ (\beta_1, \beta_2) = (\beta(v), \beta(-v)). $$
Next we express the inner sum over~$n$ in the main term using again Lemma~\ref{lemme-perron}, at the abscissa~$\kappa/2$ and the height~$\hT/2$. By similar calculations and using
\[ \Bigg|\int_{\sigma-i\hT}^{\sigma+i\hT}\Big(\frac{x}{nd}\Big)^s\frac{\dd s}s\Bigg| \ll \Big(\frac{x}{nd}\Big)^{\sigma}, \]
as well as the triangle inequality, we obtain
\begin{equation}\label{expr-S-perron}
S_2 = M + O(R_1 + R_2 + R_3),
\end{equation}
where
\begin{align*}
M :=&\ \frac1{(2\pi i)^2}\int_{\sigma-i\hT}^{\sigma+i\hT}\int_{\kappa-i\hT}^{\kappa+i\hT}x^{s}\e^{-\gamma w}F_y(s+w/2, s-w/2)\frac{\dd w}w \frac{\dd s}s, \numberthis\label{def-M-S}\\
R_1 :=&\ \frac{x^{(\beta_1+\beta_2)/2}\e^{\gamma(\beta_2-\beta_1)}F_y(\beta_1, \beta_2)}{\sqrt{\hT}} \ll x^{(\beta_1+\beta_2)/2}\e^{\gamma(\beta_2-\beta_1)}F_y(\beta_1, \beta_2)\hT^{-c_{9}}, \\
R_2 :=&\ \frac{x^{(\beta_1+\beta_2)/2}\e^{\gamma(\beta_2-\beta_1)}}{\sqrt{\hT}}\int_{-\sqrt{\hT}}^{\sqrt{\hT}}|F_y(\beta_1+i\tau, \beta_2+i\tau)|\dd \tau, \\
R_3 :=&\ \frac{x^{(\beta_1+\beta_2)/2}\e^{\gamma(\beta_2-\beta_1)}}{\sqrt{\hT}}\int_{-\sqrt{\hT}}^{\sqrt{\hT}}|F_y(\beta_1-it/2, \beta_2+it/2)|\dd t.
\end{align*}

\subsubsection*{First truncation}

By Corollary~\ref{decr-F-int}.(i), we have
\[ R_1 + R_2 + R_3 \ll x^{(\beta_1+\beta_2)/2}\e^{\gamma(\beta_2-\beta_1)}F_y(\beta_1, \beta_2)\big\{\hT^{-1/2}+\e^{-c_{10}\ubar}\big\} \ll x^{(\beta_1+\beta_2)/2}\e^{\gamma(\beta_2-\beta_1)}F_y(\beta_1, \beta_2)\hT^{-c_{11}}. \]
By~\eqref{taylor-E}, we obtain
\begin{equation}\label{majo-R123}
R_1 + R_2 + R_3 \ll x^\alpha \zeta(\alpha, y)\e^{\cR(v)}T^{-c_{11}} \ll \Psi(x, y)\frac{\e^{\cR(v)}}{\ubar}.
\end{equation}

\subsubsection*{Second truncation}

We now consider~$M$, defined at~\eqref{def-M-S}. Let~$c_3$ be the absolute constant given by Corollary~\ref{decr-F}, and put~$T_1:=c_3/(2\log y)$. We write the integration domain in the double integral~\eqref{def-M-S} as the disjoint union
\[ [-\hT, \hT]^2 = D_1\sqcup D_2,\]
where
\[ D_1 := \{(\tau, t) : |\tau-t/2|\leq T_1 \text{ and } |\tau+t/2|\leq T_1\}, \]
\[ D_2 := \{(\tau, t) : \max\{|\tau|, |t|\}\leq \hT, \text{ and }|\tau-t/2|>T_1\text{ or } |\tau+t/2|> T_1)\}. \]
Accordingly, we call~$I_1$ the contribution of~$(\tau, t)\in D_1$ to~\eqref{def-M-S}, and~$I_2$ the contribution of~$D_2$, so that~$M = I_1 + I_2$. The hypotheses of Corollary~\ref{decr-F-int} are satisfied for~$I_2$, with the parameters
$$ (X, \delta, \lambda_1, \lambda_2, \mu_1, \mu_2) = 
(\hT, c_3/2, -1/2, 1/2, \beta_1+\beta_2, \beta_1-\beta_2). $$
We deduce
\begin{align*}
I_2 &\ \ll x^{(\beta_1+\beta_2)/2}\e^{\gamma(\beta_2-\beta_1)}F_y(\beta_1, \beta_2)(\log(\hT(\log x)/(\alpha^2v_1))^2(\log x)^{-1}H(\ubar)^{-c_{12}} \\
&\ \ll_\ee x^{(\beta_1+\beta_2)/2}\e^{\gamma(\beta_2-\beta_1)}F_y(\beta_1, \beta_2)\hT^{-c_{13}}
\end{align*}
since~$\beta_1-\beta_2 \gg v_1\sqrt{\ubar}/\log x$ and~$\log(1/v_1)\ll \log\log x$. As for~\eqref{majo-R123}, we conclude that
\begin{equation}
I_2 \ll \Psi(x, y)\frac{\e^{\cR(v)}}{\ubar} \label{majo-S2-I2}.
\end{equation}

\subsubsection*{Bounds for large~$v$}

By the change of variables~$(s, w) \gets (s+w/2, s-w/2)$, we write
\begin{align*}
I_1(v) := \frac2{(2\pi i)^2}\int_{\beta_1-iT_1}^{\beta_1+iT_1}\int_{\beta_2-iT_1}^{\beta_2+iT_1}x^{(s+w)/2}\e^{\gamma(w-s)}F_y(s, w)\frac{\dd w \dd s} {(s-w)(s+w)}.
\end{align*}
We first give a rough bound for~$I(v_m)$. Consider first, then, that~$v=v_m$. By the triangle inequality,
$$ I_1(v_m) \ll x^{(\beta_1 + \beta_2)/2}\e^{\gamma(\beta_2-\beta_1)}\int_{-T_1}^{T_1}\int_{-T_1}^{T_1} |F_y(\beta_1+i\tau, \beta_2+it)|\frac{\dd \tau\dd t}{(\beta_1-\beta_2)\alpha} \qquad(v=v_m).$$
From Corollary~\ref{decr-F} and~\eqref{HT-p281}, we have
$$ \int_{-T_1}^{T_1}\int_{-T_1}^{T_1} |F_y(\beta_1+i\tau, \beta_2+it)|\frac{\dd \tau\dd t}{(\beta_1-\beta_2)\alpha} \ll \frac{F_y(\beta_1, \beta_2)}{\alpha\kappa \sigma_2} \qquad (v=v_m). $$
Since we have~$\kappa\asymp v_m/\sqrt{\sigma_2}$, we conclude that
\begin{equation}
I_1(v_m) \ll \frac{x^\alpha\zeta(\alpha, y)\e^{\cR(v_m)}}{v_m\alpha\sqrt{\sigma_2}} \asymp \Psi(x, y) \frac{\e^{\cR(v_m)}}{v_m}.\label{eq:majo-I1-v2}
\end{equation}

\subsubsection*{Differentiation, third truncation}

Now we let~$v$ vary inside~$[v_1, v_m]$. For all such~$v$, the quantity~$I_1$ is differentiable at~$v$ and
$$ I_1'(v) = -\frac{2\cu}{(2\pi i)^2}\int_{\beta_1-iT}^{\beta_1+iT}\int_{\beta_2-iT}^{\beta_2+iT} x^{(s+w)/2}\e^{\gamma(w-s)}F_y(s, w)\frac{\dd w\dd s}{s+w}. $$
Let~$T_0 := (\ubar)^{2/3}/(u\log y)$. We split the previous integrals as
\[ I'_1(v) = J_0(v) + \tilde{J}_0(v), \]
where~$J_0$ is the integral over the box~$D_0 := \{(\tau, t) : |\tau|, |t|\leq T_0\}$, and~$\tilde{J}_0$ is the complementary contribution.

\subsubsection*{Taylor range}

When~$(\tau, t)\in D_0$, we Taylor expand the integrand~: the calculations are very much similar to~\cite[page~280]{HT86}. Letting
\[ Q(\tau, t) := \frac{\tau^2}2\partial_{20}f_y(\beta_1, \beta_2) + \tau t\partial_{11}f_y(\beta_1, \beta_2) + \frac{t^2}2\partial_{02}f_y(\beta_1, \beta_2), \]
we have by a Taylor expansion at order 4, using Lemma~\ref{lemme-props-fy}.(i) and~(iv),
\[ f_y(s, w) = f_y(\beta_1, \beta_2) + i\tau\partial_{10}f_y(\beta_1, \beta_2) + it\partial_{01}f_y(\beta_2, \beta_2) - Q(\tau, t) + \sum_{j=0}^3\lambda_j\tau^jt^{3-j} + O((|\tau|+|t|)^4\sigma_4) \]
for some coefficients~$\lambda_j\ll\sigma_3$. Since~$T_0^4\sigma_4$ and~$T_0^3\sigma_3$ are~$O(1)$, we have
\[ \exp\Big\{\sum_{j=0}^3\lambda_j\tau^jt^{3-j}+O((|\tau|+|t|)^4\phi_4(\alpha, t))\Big\} = 1+\sum_{j=0}^3\lambda_j\tau^jt^{3-j} + O\big((|\tau|+|t|)^6\sigma_3^2 + (|\tau|+|t|)^4\sigma_4\big). \]
Moreover, 
\[ \frac1{s + w} = \frac1{\beta_1+\beta_2} \Big(1 - i\frac{t+\tau}{\beta_1+\beta_2}
+ O\big(\big(\frac{|\tau|+|t|}{\alpha}\big)^{2}\big)\Big). \]

Since we have~$\sigma_3\alpha^{-1} \ll \sigma_4$, we obtain for some numbers~$\mu_1, \mu_2$ independent of~$\tau$ and~$t$,
\begin{equation}
\label{intgd-I1-S2}
\begin{aligned} \frac{x^{(s+w)/2}\e^{\gamma(w-s)}F_y(s, w)}{s + w} &\ = \frac{x^{(\beta_1+\beta_2)/2}\e^{\gamma(\beta_2-\beta_1)}F_y(\beta_1, \beta_2)}{\beta_1 + \beta_2}\e^{-Q(\tau, t)} \Big\{1 + \sum_{j=0}^3\lambda_j\tau^jt^{3-j} \\ &  + \mu_1\tau+\mu_2 t + O\Big( (|\tau|+|t|)^6\sigma_3^2 + (|\tau|+|t|)^4\sigma_4 + (|\tau|+|t|)^2\alpha^{-2}\Big)\Big\}.
\end{aligned}
\end{equation}

Note that we have the formul{\ae} (see also~\cite[formula~(11.13)]{RT}):
\[ \iint_{(\tau, t)\in\bfR^2}\e^{-Q(\tau, t)}\dd\tau\dd t = \frac{2\pi}{\sqrt{\hess[f_y](\beta_1, \beta_2)}} \asymp \frac1{\sigma_2}, \]
\begin{equation}
\iint_{(\tau, t)\in\bfR^2}|\tau|^k|t|^\ell\e^{-Q(\tau, t)}\dd\tau\dd t \ll_{k, \ell} \frac1{(\sigma_2)^{1+(k+\ell)/2}}, \qquad (k, \ell\in\bfN).\label{majo-abs-tau-t}
\end{equation}
We integrate the quantity~\eqref{intgd-I1-S2} over the square~$D_0$. By the symmetry of~$D_0$, the contribution of terms involving~$\lambda_j$ and~$\mu_j$ vanishes. Therefore, using~\eqref{majo-abs-tau-t}, we have
\[ J_0 = -2\cu\Big\{1+O\Big(\frac1{\ubar}\Big)\Big\}\frac{x^{(\beta_1+\beta_2)/2}\e^{\gamma(\beta_2-\beta_1)}F_y(\beta_1, \beta_2)} {(2\pi)^2(\beta_1 + \beta_2)}\iint_{(\tau, t)\in D_0}\e^{-Q(\tau, t)}\dd\tau\dd t. \]
On the other hand, following again~\cite[section 11.3]{RT}, we have
\begin{equation}
\label{queue-gauss}
\begin{aligned}
\iint_{\substack{\max\{|t|, |\tau|\}\geq T_0}}& \e^{-Q(\tau, t)}\dd\tau\dd t \\ & \leq \iint_{\substack{\max\{|t|, |\tau|\}\geq T_0}}\exp\Big\{-\frac{\hess[f_y]}4\Big(\frac{\tau^2}{\partial_{02}f_y} + \frac{t^2}{\partial_{20}f_y}\Big)\Big\}\dd\tau\dd t  
\end{aligned}
\end{equation}
where the partial derivatives of~$f_y$ are evaluated at~$(\beta_1, \beta_2)$. By Lemma~\ref{lemme-props-fy}, we have that~$\hess[f_y](\beta_1, \beta_2) \asymp \sigma_2^2$ and~$\partial_{20}(\beta_1, \beta_2)\ll \sigma_2$ (similarly for~$\partial_{02}f_y$). Therefore, for some~$c>0$, the right-hand side of~\eqref{queue-gauss} is
\[ \leq \iint_{\substack{\max\{|t|, |\tau|\}\geq T_0}}\e^{-c\sigma_2(\tau^2+t^2)}\dd\tau\dd t \ll \frac{\e^{-c\sigma_2T_0^2}}{T_0\sigma_2^{3/2}}.\]
Since~$T_0^2\sigma_2\asymp \ubar^{1/3}$, the above is certainly~$O((\sigma_2\ubar)^{-1})$. We conclude that
\[ J_0 = -2\cu\Big\{1+O\Big(\frac1{\ubar}\Big)\Big\}\frac{x^{(\beta_1+\beta_2)/2}\e^{\gamma(\beta_2-\beta_1)}F_y(\beta_1, \beta_2)} {2\pi(\beta_1+\beta_2)\sqrt{\hess[f_y](\beta_1, \beta_2)}}. \]
Now by~equations~\eqref{taylor-hess} and~\eqref{taylor-beta}, we have
\[ \hess[f_y](\beta_1, \beta_2) = \hess[f_y](\alpha, \alpha) \Big\{1+O\Big(\frac{v^2}{\ubar}\Big)\Big\}, \]
$$ \beta_1 + \beta_2 = 2\alpha + O\Big(\frac{v^2}{\log x}\Big), $$
and by definition of~$\cu$ we have~$\hess[f_y](\alpha, \alpha)=\cu^2\phi_2(\alpha, y)$. We obtain
\begin{equation}
\label{estim-S2-I0}
\begin{aligned}
J_0 =\ & -\Big\{1+O\Big(\frac{1+v^2}{\ubar}\Big)\Big\}\frac{x^{(\beta_1+\beta_2)/2}\e^{\gamma(\beta_2-\beta_1)}F_y(\beta_1, \beta_2)} {2\pi\sqrt{\sigma_2}\alpha} \\
=\ & - \Psi(x, y)\Big\{1+O\Big(\frac{1+v^2}{\ubar}\Big)\Big\}\frac{\e^{\cR(v)}}{\sqrt{2\pi}}
\end{aligned}
\end{equation}
by~\eqref{taylor-E} and~\eqref{estim-psi-col}. This is our expected main term. We note that
\begin{equation}\label{crude-I0-S2} J_0 \asymp \Psi(x, y)\e^{\cR(v)}. \end{equation}

\subsubsection*{Bounds away from the Taylor range}

It remains to estimate~$\tilde{J}_0$, which is the contribution to~$I_1'(v)$ of those~$(\tau, t)$ which satisfy~$\max\{|\tau|, |t|\}\geq T_0$. By Corollary~\ref{decr-F}.(ii) and symmetry, we have
\[ {\tilde J}_0 \ll \sqrt{\sigma_2}\frac{x^{(\beta_1+\beta_2)/2}\e^{\gamma(\beta_2-\beta_1)}F_y(\beta_1, \beta_2)}{\alpha} \int_0^{T_1}\int_{T_0}^{T_1}\Big\{\Big(1+\frac{\tau^2\sigma_2}{y/\log y}\Big)\Big(1+\frac{t^2\sigma_2}{y/\log y}\Big)\Big\}^{-cy/\log y}\dd t\dd\tau. \]
By~\eqref{HT-p281}, we have
\begin{equation}
{\tilde J}_0 \ll \frac{x^{(\beta_1+\beta_2)/2}\e^{\gamma(\beta_2-\beta_1)}F_y(\beta_1, \beta_2)}{\ubar\alpha\sqrt{\sigma_2}} \asymp \Psi(x, y) \frac{\e^{\cR(v)}}{\ubar} \asymp \frac{J_0}{\ubar} \label{majo-S2-It0}
\end{equation}
by~\eqref{taylor-E} and~\eqref{estim-psi-col}. Regrouping our estimates~\eqref{majo-S2-It0} and~\eqref{estim-S2-I0}, we conclude that
\begin{equation}
\frac{I'_1(v)}{\Psi(x, y)} = -\frac{\e^{\cR(v)}}{\sqrt{2\pi}} + O\Big(\frac{1+v^2}{\ubar}\e^{\cR(v)}\Big) \qquad (v_1\leq v\leq v_m).\label{eq:estim-I1p}
\end{equation}

\subsubsection*{Integration}

Since~$I_1(v) = I_1(v_m) - \int_v^{v_m}I_1'(z)\dd z$, estimates~\eqref{eq:estim-I1p} and~\eqref{eq:majo-I1-v2} imply
\begin{equation}
\label{eq:estim-I1}
\begin{aligned}
\frac{I_1(v)}{\Psi(x, y)} = \int_v^{v_m}\e^{\cR(z)}\frac{\dd z}{\sqrt{2\pi}} +
O\Big(\frac{\e^{\cR(v_m)}}{v_m} + \frac1{\ubar}\int_v^{v_m} (1+z^2)\e^{\cR(z)}\dd z\Big)
\end{aligned}
\end{equation}
We regroup estimate~\eqref{eq:estim-I1} with~\eqref{majo-S2-I2} and~\eqref{majo-R123} to obtain the required result
\[ \frac{S}{\Psi(x, y)} = \int_v^{v_m}\e^{\cR(z)}\frac{\dd z}{\sqrt{2\pi}} +
O\Big(\frac{\e^{\cR(v_m)}}{v_m} + \frac{\e^{\cR(v)}}{\ubar^2} + \frac1{\ubar}\int_v^{v_m} (1+z^2)\e^{\cR(z)}\dd z\Big). \]

\vspace{1em}

\appendix

\section{Proof of Lemma~\ref{decr-fy}}

\begin{proof}[Proof of part~(i) of Lemma~\ref{decr-fy}]\renewcommand{\qedsymbol}{}
We shall actually prove that for all~$p\leq y$,
\[ \Re\Big\{\Xi(p^{-s}, p^{-w}) - \Xi(p^{-\sigma}, p^{-\kappa})\Big\} \leq -\frac1{100}\Big\{\log\Big(1+\Big(\frac{\tau}{\sigma}\Big)^2\Big) + \log\Big(1+\Big(\frac{t}{\kappa}\Big)^2\Big) \Big\} \]
from which the lemma follows by summing over~$p\leq y$. For this proof, it will be more convenient to depart slightly from the notation used in the rest of this paper. We put
\[ \mu_1 \gets \sigma\log p, \qquad \tau \gets \tau/\sigma, \]
\[ \mu_2 \gets \kappa\log p, \qquad t \gets t/\kappa. \]
Our objective is to prove that
\begin{equation}\label{eq-obj-1} \Re\Big\{\Xi(\e^{-\mu_1(1+i\tau)}, \e^{-\mu_2(1+it)}) - \Xi(\e^{-\mu_1}, \e^{-\mu_2})\Big\} \leq -\frac1{100}\Big\{\log\big(1+\tau^2\big) + \log\big(1+t^2\big) \Big\} \end{equation}
under the hypotheses
\begin{equation}\label{hyp-mu} |\mu_1-\mu_2|\leq \eta\mu_1, \qquad \max\{\mu_1(1+|\tau|), \mu_2(1+|t|)\} \leq \eta. \end{equation}
We abbreviate further
\[ \bx := \e^{-\mu_1(1+i\tau)}, \quad \by := \e^{-\mu_2(1+it)}, \quad \bz := (1-\bx)/(1-\by). \]
The equation~\eqref{eq-obj-1} is trivially satisfied at~$\tau=t=0$. Replacing~$\tau$ by~$\lambda\tau$ and~$t$ by~$\lambda t$ with~$\lambda$ varying in~$[0, 1]$, and then differentiating with respect to~$\lambda$, we have that it will be sufficient to establish that under the same hypotheses,
\begin{equation}\label{eq-obj-2} \Im\Big\{\mu_1\tau \bx\partial_{10}\Xi(\bx, \by) + \mu_2t \by\partial_{01}\Xi(\bx, \by)\Big\} \leq -\frac1{50}\Big\{\frac{\tau^2}{1+\tau^2} + \frac{t^2}{1+t^2} \Big\} .\end{equation}
The quantity on the LHS can be written as
\[ S := \Im\Big\{-\frac{\mu_1\tau\bx}{1-\bx}\Big(\frac1{\log \bz} - \frac \bz{\bz-1}\Big) -\frac{\mu_2t \by}{1-\by}\Big(\frac 1{\bz-1} - \frac1{\log \bz}\Big) \Big\} \]
where the parentheses involving~$\bz$ are interpreted as~$-1/2$ if~$\bz=1$. Let~$\fr(\bz), \fs(\bz) \in \bfR$ be defined for~$z\in\CpR$ by
\begin{equation}\label{def-frfs} \fr(\bz) + i\fs(\bz) := -1/2+\frac1{\log \bz}-\frac1{\bz-1}. \end{equation}
Note that~$\fr(1/\bz)=-\fr(\bz)$ and~$\fs(1/\bz)=-\fs(\bz)$. We can write~$S = S_1 + S_2$, where
\[ S_1 = \Im\Big\{\frac{-\mu_1\tau \bx}{1-\bx}\Big\}\big(-1/2+\fr(\bz)\big) + \Im\Big\{\frac{-\mu_2 t \by}{1-\by}\Big\}\big(-1/2-\fr(\bz)\big), \]
\[ S_2 = \fs(\bz)\Re\Big\{\frac{-\mu_1\tau \bx}{1-\bx}+\frac{\mu_2t \by}{1-\by}\Big\}. \]
Given the hypotheses~\eqref{hyp-mu}, we remark that we have
\begin{equation}\label{estim-z} \bz = \big\{1+O(\eta)\}\frac{1+i\tau}{1+it}, \end{equation}
\begin{equation}\label{estim-Im} \Im\Big\{\frac{\mu_1\tau \bx}{1-\bx}\Big\} = \frac{-\mu_1\tau\e^{\mu_1}\sin(\mu_1\tau)}{|\e^{\mu_1(1+i\tau)}-1|^2} = -\big\{1+O(\eta)\big\}\frac{\tau^2}{1+\tau^2}, \end{equation}
\begin{equation}\label{estim-Re} \Re\Big\{\frac{\mu_1\tau \bx}{1-\bx}\Big\} = \frac{\mu_1\tau\big(\e^{\mu_1}\cos(\mu_1\tau)-1\big)}{|\e^{\mu_1(1+i\tau)}-1|^2}  = \big\{1+O(\eta(1+|\tau|))\big\}\frac{\tau}{1+\tau^2}. \end{equation}
By symmetry we have the analog estimates for~$\mu_2 t\by/(1-\by)$. We shall use the following inequalities concerning the functions~$\fr$ and~$\fs$.
\begin{lemme}\label{lemme-rs}
When~$\bz\in\bfC\setminus\bfR_-$, the following bounds hold.
\begin{enumerate}[(a)]
\item $|\fr(\bz)|\leq 1/2$,
\item $\fr(\bz) \leq 0 \qquad\qquad \text{for } |\bz|\geq 1$,
\item $|\fr(\bz)|\leq 1/10 \qquad \text{for }1/2\leq |\bz|\leq 2$,
\item $|\fs(\bz)|\leq 1/\pi$,
\item $|\fs(\bz)|\leq 0.15\times|\arg \bz|$.
\end{enumerate}
\end{lemme}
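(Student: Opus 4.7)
Substituting $w := \log \bz$ and using the identity $(e^w - 1)^{-1} + 1/2 = (1/2)\coth(w/2)$, the function $F(\bz) := \fr(\bz) + i\fs(\bz) = -1/2 + (\log\bz)^{-1} - (\bz - 1)^{-1}$ simplifies to $F = 1/w - (1/2)\coth(w/2)$. The standard partial-fraction expansion $\coth(w/2) = 2/w + \sum_{n \ge 1} 4w/(w^2 + (2\pi n)^2)$ then yields
$$F(\bz) = -\sum_{n\ge 1} \frac{2w}{w^2 + (2\pi n)^2} = -\sum_{n \neq 0} \frac{1}{w - 2\pi i n}.$$
Writing $w = \sigma + i\theta$ with $\sigma := \log|\bz|$ and $\theta := \arg \bz \in (-\pi,\pi)$, Poisson summation on the real part (or direct separation of real/imaginary parts of $F$) gives the closed forms
$$\fr(\bz) = \frac{\sigma}{\sigma^2 + \theta^2} - \frac{\sinh \sigma}{2(\cosh \sigma - \cos\theta)}, \qquad \fs(\bz) = -\frac{\theta}{\sigma^2 + \theta^2} + \frac{\sin\theta}{2(\cosh\sigma - \cos\theta)}.$$

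\textbf{Parts (a), (b), (c).} The series $\fr(\bz) = -\sigma\sum_{n\neq 0}[\sigma^2 + (\theta - 2\pi n)^2]^{-1}$ exhibits $\fr$ as $-\sigma$ times a positive sum, so $\fr$ has sign opposite to $\log|\bz|$, which proves (b). For (a), the function $\fr$ is bounded and harmonic on $\CpR$; on the boundary $\bz = -r \pm 0i$, the explicit formula $\fr(-r\pm 0i) = -1/2 + (\log r)/[(\log r)^2 + \pi^2] + 1/(1+r)$ takes values in $[-1/2, 1/2]$, and $\fr \to -1/2$ at $\infty$, so the maximum principle yields $|\fr| \leq 1/2$. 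For (c), I apply the maximum principle on the bounded annular region $\{1 \le |\bz| \le 2\} \cap \CpR$: the inner arc contributes $\fr \equiv 0$; on the negative segment $[-2,-1]$, $\fr(-r)$ decreases monotonically from $0$ at $r = 1$ to approximately $-0.0997$ at $r = 2$; on the outer arc $\bz = 2e^{i\theta}$ a direct computation gives
$$\fr(2e^{i\theta}) = -\frac{3}{2(5 - 4\cos\theta)} + \frac{\log 2}{(\log 2)^2 + \theta^2},$$
which lies in $(-1/10, 0)$ with the infimum approached as $\theta \to \pm\pi$.

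\textbf{Parts (d), (e).} Applying the Abel--Plana summation formula to $f(t) = \bz^{-t}$ for $\bz > 1$ (then analytically continuing to $\CpR$) gives the integral representation $F(\bz) = -2\int_0^\infty \sin(y\log\bz)/(e^{2\pi y} - 1)\,dy$. Extracting the imaginary part,
$$\fs(\bz) = -2\int_0^\infty \cos(y\log|\bz|)\,\frac{\sinh(y\arg\bz)}{e^{2\pi y} - 1}\,dy,$$
so that $|\fs(\bz)| \le h(|\arg \bz|)$, where $h(\theta) := 2\int_0^\infty \sinh(y\theta)/(e^{2\pi y} - 1)\,dy$. Expanding $(e^{2\pi y} - 1)^{-1} = \sum_{n\ge 1} e^{-2\pi n y}$ and integrating termwise evaluates $h(\theta) = 1/\theta - (1/2)\cot(\theta/2)$ in closed form. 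Since $2\sin(\theta/2) < \theta$ on $(0,\pi]$, $h'(\theta) = -1/\theta^2 + 1/[4\sin^2(\theta/2)] > 0$, so $h$ is increasing with maximum $h(\pi) = 1/\pi$, proving (d). For (e), Taylor-expanding $\sinh(y\theta)$ inside the integral and interchanging sum and integral shows $h(\theta) = \theta/12 + \theta^3/720 + \cdots$ has only positive coefficients; therefore $h(\theta)/\theta$ is increasing on $(0,\pi]$, with maximum $h(\pi)/\pi = 1/\pi^2 \approx 0.101 < 0.15$.

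\textbf{Main obstacle.} The inequality (c) is numerically tight --- the infimum of $\fr$ over the outer boundary of the annulus is approximately $-0.0997$, uncomfortably close to $-1/10$ --- so some care is needed to verify the monotonic decrease of $\fr(-r)$ on $r \in [1,2]$ and to bound the oscillations of $\fr(2e^{i\theta})$ in $\theta$. A secondary technical point is justifying the Poisson/Abel--Plana manipulations on all of $\CpR$ by analytic continuation from the real axis; this is routine but deserves mention.
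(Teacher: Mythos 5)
Your argument is correct in substance but follows a genuinely different route from the paper. The paper differentiates in the variable $\omega=(\log\bz)/2$, proves sign bounds for $L(\omega)=(\sinh\omega)^{-2}-\omega^{-2}$ on a quarter-strip, and then integrates along horizontal and vertical paths so that every estimate reduces to limiting boundary values (in particular (c) reduces to the single evaluation $\fr(-2+i0)\approx-0.0997$, and (e) to the bound $|\Re L|\leq 0.6$). You instead exploit the same partial-fraction structure directly: the expansion $\fr+i\fs=-\sum_{n\neq0}(w-2\pi i n)^{-1}$, $w=\log\bz$, gives (b) immediately from the sign of $\log|\bz|$, and the Abel--Plana representation of $\fs$ together with the closed form $h(\theta)=1/\theta-\tfrac12\cot(\theta/2)$ gives clean proofs of (d) and (e) -- indeed with the sharper constant $1/\pi^{2}\approx0.101$ in place of $0.15$, and without the two pages of sign analysis of $L$ that the paper needs. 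For (a) and (c) you trade the paper's path-monotonicity for harmonicity plus the maximum principle, which is legitimate (for (a) a Phragm\'en--Lindel\"of-type statement for bounded harmonic functions on $\CpR$ is needed, since the domain is unbounded; boundedness of $\fr$ and the boundary values you list do suffice).

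Two points must still be supplied to make (c) complete. First, the boundary estimates you assert are true but not proved: on the slit, $\tfrac{d}{dr}\fr(-r\pm i0)=\tfrac1r\tfrac{\pi^2-(\log r)^2}{((\log r)^2+\pi^2)^2}-\tfrac1{(1+r)^2}<0$ on $[1,2]$ since the first term is at most $\pi^{-2}<\tfrac19$; on the outer circle, $\tfrac{d}{d\theta}\fr(2\e^{i\theta})=-\tfrac{2\theta\log 2}{((\log 2)^2+\theta^2)^2}-\tfrac{6\sin\theta}{(5-4\cos\theta)^2}<0$ for $\theta\in(0,\pi)$, so both boundary pieces are monotone and the minimum over the whole boundary is the corner value $\fr(-2\pm i0)\approx-0.0997>-1/10$ (the same single numerical evaluation the paper relies on, so the tightness you flag is inherent to the statement, not to your method). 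Second, your annulus only covers $1\leq|\bz|\leq2$; to get $1/2\leq|\bz|\leq1$ you need the inversion symmetry $\fr(1/\bz)=-\fr(\bz)$ (immediate from your closed form, which is odd in $\sigma=\log|\bz|$), which you never state. With these elementary additions the proposal is a complete and somewhat more economical proof than the paper's.
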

Let us first deduce from these the desired inequality~\eqref{eq-obj-2}. Assume first that one of~$|\tau|$ or~$|t|$ is greater than~$3/2$. By symmetry we suppose that~$|t|\leq |\tau|$ and~$|\tau|\geq 3/2$. Then~\eqref{estim-z} implies that~$|\bz|\geq 1 + O(\eta)$, and since the derivative~$\fr'(\bz)$ is uniformly bounded if~$|\bz|\geq 1/2$, Lemma~\ref{lemme-rs}.(b) yields~$\fr(\bz)\leq O(\eta)$ assuming~$\eta$ is small enough. Using~\eqref{estim-Im} and Lemma~\ref{lemme-rs}.(a), it follows that
\[ S_1 = \big\{1+O(\eta)\big\}\frac{\tau^2}{1+\tau^2}\big(-\frac12 + \fr(\bz)\big) + \big\{1+O(\eta)\big\}\frac{t^2}{1+t^2}\big(-\frac12-\fr(\bz)\big) \leq -\big\{\frac12+O(\eta)\big\}\frac{\tau^2}{1+\tau^2}. \]
On the other hand, using~\eqref{estim-Re}, ~Lemma~\ref{lemme-rs}.(d) and~$|\tau|\geq 3/2$, we have
\begin{align*} S_2 &\leq \frac1{\pi}\Big\{\frac{|\tau|}{1+\tau^2} + \frac{|t|}{1+t^2}(1+O(\eta)) + O\Big(\eta\Big\{\frac{\tau^2}{1+\tau^2}+\frac{t^2}{1+t^2}\Big\}\Big)\Big\} \\
&\leq \frac{1+O(\eta)}{\pi}\Big\{\frac{|\tau|}{1+\tau^2} + \frac12\Big\} + O\Big(\eta\frac{\tau^2}{1+\tau^2}\Big) \\
&\leq \big\{0.45 + O(\eta)\big\}\frac{\tau^2}{1+\tau^2}. \end{align*}
Adding the bounds for~$S_1$ and~$S_2$, we obtain
\[ S \leq \{-0.05 + O(\eta) \}\frac{\tau^2}{1+\tau^2} \leq \{-0.025+O(\eta)\}\Big(\frac{\tau^2}{1+\tau^2}+\frac{t^2}{1+t^2}\Big) \]
which is acceptable granted that~$\eta$ is sufficiently small. Assume now~$|\tau|, |t|\leq 3/2$. Then~$|\bz| \leq \{1+O(\eta)\}\sqrt{1+(3/2)^2} \leq 2$ if~$\eta$ is small enough; similarly~$|\bz|\geq 1/2$. Then proceeding as before, but this time using Lemma~\ref{lemme-rs}.(c), we obtain
\[ S_1 \leq \big\{-0.4 + O(\eta)\big\}\Big(\frac{\tau^2}{1+\tau^2}+\frac{t^2}{1+t^2} \Big). \]
On the other hand, we have
\begin{align*}
  |\arg(\bz)| = \Big|\arg\Big(\frac{1-\bx}{1-\by}\Big)\Big| =\ &
  \Big|\arctan\Big(\frac{\sin(\mu_1\tau)}{\e^{\mu_1}-\cos(\mu_1\tau)}\Big)
  - \arctan\Big(\frac{\sin(\mu_2t)}{\e^{\mu_2}-\cos(\mu_2t)}\Big)\Big| \\
  \leq\ & \arctan(|\tau|) + \arctan(|t|) .
\end{align*}
Using~\eqref{estim-Re} and~Lemma~\ref{lemme-rs}.(e), we have
\begin{align*} S_2 &\leq \big\{0.15+O(\eta)\big\}\big(\arctan(|\tau|)+\arctan(|t|)\big)\Big(\frac{|\tau|}{1+\tau^2}+\frac{|t|}{1+t^2}\Big) \\
&\leq \big\{0.3+O(\eta)\big\}\Big(\frac{\tau^2}{1+\tau^2}+\frac{t^2}{1+t^2}\Big). \end{align*}
We finally obtain in this case
\[ S \leq \big\{-0.1 + O(\eta)\big\}\Big(\frac{\tau^2}{1+\tau^2}+\frac{t^2}{1+t^2}\Big) \]
which is once again acceptable.

To conclude the proof of part~(i) of Lemma~\ref{decr-fy}, it remains to justify Lemma~\ref{lemme-rs}.
\begin{proof}[Proof of Lemma~\ref{lemme-rs}]\renewcommand{\qedsymbol}{}
Since~$\fr(1/\bz)=-\fr(\bz)$ and~$\fr(\bar{\bz})=\overline{\fr(\bz)}$, and similarly for~$\fs(\bz)$, it suffices to consider the case when~$|\bz|\geq 1$ and~$\arg(\bz)\geq 0$. For all~$\omega\in\bfC$, $|\Im\omega|\leq \pi/2$, let
\begin{equation}\label{def-Lo} L(\omega) := \frac1{(\sinh\omega)^2}-\frac1{\omega^2} = \sum_{k\in\bfZ\setminus\{0\}} \frac1{(\omega+ik\pi)^2}.\end{equation}
We shall prove that
\begin{equation}\label{majo-L}
\left\{\begin{aligned}-0.6\leq &\Re L(\omega) \leq 0, \\ &\Im L(\omega) \geq 0,\end{aligned}\right. \qquad (0\leq\Re\omega,\quad 0\leq\Im\omega\leq\pi/2.)
\end{equation}
Let us first prove that~\eqref{majo-L} implies Lemma~\ref{lemme-rs}. Let~$\bz_1, \bz_2\in\CpR$. We have
\begin{align*} \fr(\bz_2)-\fr(\bz_1) &= \frac12\Re\Big\{\int_{(\log \bz_1)/2}^{(\log \bz_2)/2}L(\omega)\dd\omega\Big\} \\&= \frac12\int_{(\log|\bz_1|)/2}^{(\log|\bz_2|)/2}\Re L(t+i\arg(\bz_1)/2)\dd t - \frac12\int_{(\arg \bz_1)/2}^{(\arg \bz_2)/2}\Im L\big((\log |\bz_2|)/2+it\big)\dd t. \end{align*}
Assuming~$1\leq|\bz_1|\leq|\bz_2|$ and~$0\leq\arg \bz_1\leq \arg \bz_2$, the integrals are respectively non-positive and non-negative in view of~\eqref{majo-L}, and we get in this case~$\fr(\bz_1)\geq \fr(\bz_2)$.
By setting~$\bz_1=1$ and~$\bz_2=\bz$, it follows that~$\fr(\bz)\leq 0$. By setting~$\bz_1=\bz$ and~$\bz_2=-X+i0$ and letting~$X\to\infty$, it follows that~$\fr(\bz)\geq \limsup_{X\to\infty}\fr(-X+i0) = -1/2$. Finally, in the case where~$|\bz|\leq 2$, setting~$\bz_1=\bz$ and~$\bz_2 = -2+i0$, we obtain~$\fr(\bz)\geq\fr(-2+i0)$, which evaluates numerically to~$-0.0997\pm10^{-5}\geq -1/10$. This proves parts (a), (b) and (c) of the lemma.
On the other hand, noting that~$L(\omega)\in\bfR$ when~$\omega\in\bfR$, we obtain
\[ \fs(\bz) = \frac12\int_0^{(\arg \bz)/2}\Re L\big((\log |\bz|)/2+it\big)\dd t. \]
The integrand being non-positive, we obtain~$0\geq \fs(\bz)\geq \fs(-|\bz|+i0) = -\frac{\pi}{(\log|\bz|)^2+\pi^2}\geq -1/\pi$ and this proves part~(d) of the lemma. Moreover, by the triangle inequality, we obtain
\[ |\fs(\bz)|\leq \frac{(\arg \bz)}{4}\sup_{\substack{\Re\omega\geq 0\\0\leq \Im\omega\leq \pi/2}}|\Re L(\omega)| \leq 0.15\times(\arg \bz) \]
and this proves part (e).

It remains to prove~\eqref{majo-L}. We recall that~$L$ was defined in~\eqref{def-Lo}. Let~$\omega=a+ib\in\bfC$ be fixed with~$a\geq 0$ and~$0\leq b\leq \pi/2$. We have
\[ \big|\Re L(\omega)\big| \leq \big|L(\omega)\big| \leq \sum_{k\in\bfZ\setminus\{0\}}\frac1{|\omega+ik\pi|^2} \leq \sum_{k\in\bfZ\setminus\{0\}}\frac1{(b+k\pi)^2} = \frac1{(\sin b)^2}-\frac1{b^2}. \]
The last expression is easily seen to be maximal when~$b=\pi/2$; its value at this point is~$1-4/\pi^2 \leq 0.6$.

Next, suppose~$a\leq\pi/2$. Then using the series representation~\eqref{def-Lo}, we have
\[ \Re L(\omega) = \sum_{k\in\bfZ\setminus\{0\}} \frac{a^2-(b+k\pi)^2}{(a^2+(b+k\pi)^2)^2} \leq 0. \]
If on the contrary~$a>\pi/2$, then we have
\[ \Re L(\omega) = \Re\Big\{\frac1{(\sinh \omega)^2}-\frac1{\omega^2}\Big\} = \frac{\sinh^2 a\cos(2b) - \sin^2b}{(\sinh^2a + \sin^2b)^2} - \frac{a^2-b^2}{(a^2+b^2)^2}. \]
This is obviously non-positive if~$\pi/4\leq b\leq \pi/2$. If~$b<\pi/4$, then the above is
\[ \leq \frac{1}{\sinh^2 a} - \frac{1}{a^2}\phi(b/a), \]
where~$\phi(t) := (1-t^2)/(1+t^2)^2$. It is easily verified that~$\phi(t)\geq0.48 \geq (a/\sinh a)^2$ for~$|t|\leq 1/2$ and~$a>\pi/2$. Since indeed~$0\leq b/a\leq 1/2$, it follows that~$\Re L(\omega) \leq 0$ as required.

We turn now to~$\Im L(\omega)$. Consider first the case~$a\leq 4.9$. Using the series representation~\eqref{def-Lo} and grouping indices with same absolute values, we obtain
\begin{equation}\label{eq-ImF-1} \Im L(\omega) = -4ab\sum_{k\geq 1}\frac{a^4+(b^2-(k\pi)^2)(2a^2+b^2+3(k\pi)^2)}{(a^2+(b+k\pi)^2)^2(a^2+(b-k\pi)^2)^2}. \end{equation}
Given that~$b\leq \pi/2$, the numerator is less than~$a^4 + (b^2-\pi^2)(2a^2+b^2+3\pi^2)$; this last expression is maximal when~$b=\pi/2$. At this point, it equals~$a^4 - 3\pi^2/2 a^2 -39\pi^4/16$ which is negative by our assumption that~$0\leq a\leq 4.9$; in view of~\eqref{eq-ImF-1}, we have thus obtained~$\Im L(\omega)\geq 0$ when~$a\leq 4.9$. Suppose now on the contrary that~$a>4.9$. Then
\[ \Im L(\omega) = \frac{-2\sinh a\cosh a\sin b\cos b}{(\sinh^2a + \sin^2b)^2}+\frac{2ab}{(a^2+b^2)^2} = -2\psi\big(\frac{\sin b}{\sinh a}\big)\frac{\cosh a\cos b}{\sinh^2a} + \frac2{a^2}\psi(b/a) \]
where~$\psi(t) := t/(1+t^2)^2$. It is easily established that~$\psi(t)/t\geq 0.81$ for~$0\leq t\leq 1/3$, and~$\psi(t)\leq t$ for~$t>0$. Since~$b/a\leq 1/3$, we obtain
\[ \Im L(\omega) \geq -2\frac{\sin b\cos b\cosh a}{\sinh^3 a} + \frac{1.62\times b}{a^3}. \]
Using~$a>4.9$, we have~$a^3(\cosh a)/\sinh^3 a\leq 0.03$. Consequently,
\[ \Im L(\omega) \geq \frac{b}{a^3}\big\{1.62-0.06\big\} \geq 0. \]
This concludes the proof of~\eqref{majo-L}, hence of part~(i) of Lemma~\ref{decr-fy}.
\end{proof}
\end{proof}
\begin{proof}[Proof of part~(ii) of Lemma~\ref{decr-fy}]\renewcommand{\qedsymbol}{}
We quote from~\cite[equation~(5.2)]{BT-Residu} the bound
$$ \frac{\zeta(s, y)}{\zeta(\sigma, y)} \ll \Big(1 + \frac{\tau}{\sigma\log y}\Big)^{-c_\ee \pi(y)} \qquad (|\tau|\leq \e^{(\log y)^{3/2-\ee}}, 0<\sigma \ll 1/\log y). $$
From the definition~\eqref{definition-h}, we deduce that it will suffice to prove
\begin{equation}
\Re(g(z)) \geq O(1) \qquad (z\in\CpR).\label{eq:mino-Reg}
\end{equation}
Because~$g({\bar z}) = \overline{g(z)}$ and~$g(1/z)=g(z)$, we may assume~$|z|\geq 1$ and~$\arg z \geq 0$. In terms of~$w = 2\log z$, this means~$\Re w \geq 0$,~$\Im w \in [0, \pi/2)$ and we have
$$ \tg(w) := g(\e^{2w}) = \log\Big(\frac{\sinh w}{w}\Big). $$
Notice that, with the definition~\eqref{def-Lo},
$$ \tg''(w) = -L(w). $$
From~\eqref{majo-L}, we deduce
$$ \Re \tg'(w) = -\int_0^{\Re w}\Re L(t)\dd t + \int_0^{\Im w}\Im L(\Re w + it)\dd t \geq 0, $$
$$ \Im \tg'(w) = -\int_0^{\Im w}\Re L(\Re w + it)\dd t \leq 0.6\pi/2\leq 1, $$
so that
$$ \Re g(z) = \Re \tg(w) = \int_0^{\Re w}\Re\tg'(t)\dd t - \int_0^{\Im w}\Im\tg'(\Re w + it)\dd t \geq -\pi/2. $$
Therefore,~\eqref{eq:mino-Reg} holds and so does part~(ii) of Lemma~\ref{decr-fy}.
\end{proof}
\begin{proof}[Proof of part~(iii) of Lemma~\ref{decr-fy}]
First we note that since~$\sigma\leq 0.6$, by the previously done calculation~\eqref{majo-phi2-tail}, we have~$\phi_2(\sigma, y^{1/2}) \ll y^{-0.2}\phi_2(\sigma, y)$ and similarly for~$\kappa$. For large enough~$y$, it will therefore be sufficient to show that under the stated conditions, there exists~$c(\ee)>0$ such that
\[ \Re\Big\{\Xi(p^{-s}, p^{-w}) - \Xi(p^{-\sigma}, p^{-\kappa})\Big\} \leq -c(\ee)\Big\{ \frac{p^{\sigma}(\tau\log p)^2}{(p^\sigma-1)^2} + \frac{p^{\kappa}(t\log p)^2}{(p^\kappa-1)^2}\Big\} \]
for~$y^{1/2}\leq p\leq y$ (so that~$\log p\asymp\log y$). We shall once again depart slightly from the notation used up to now. Relabelling
\[ \bx\gets p^{-s}, \quad \by=p^{-w}\quad\text{and}\quad \tau\gets\arg \bx, \quad t\gets \arg\by, \]
we wish to show that for all~$\ee>0$, there exists~$c=c(\ee)>0$ such that for all~$(\bx, \by)\in\cU^2$ satisfying
\begin{equation}
\max\{|\bx|, |\by|\}\leq 1-\ee, \quad \max\{\tau, t\}\leq c ,\label{eq:proof-lemma5ii-conds}
\end{equation}
we have
\[ \Re\big\{\Xi(\bx, \by) - \Xi(|\bx|, |\by|)\big\} \leq -c \Big\{\frac{|\bx|\tau^2}{(1-|\bx|)^2} + \frac{|\by|t^2}{(1-|\by|)^2}\Big\}. \]
We consider~$\ee$ as being fixed, and let implicit constants depend of~$\ee$ throughout the rest of the proof. Replacing~$\tau$ by~$\lambda\tau$ and~$t$ by~$\lambda t$ and differentiating with respect to~$\lambda$ as before, we see that it will be sufficient to show under the same assumptions~\eqref{eq:proof-lemma5ii-conds}, we have
\[ \Im\big\{\tau \bx \partial_{10}\Xi(\bx, \by) + t \by \partial_{01}\Xi(\bx, \by)\big\} \leq -c\Big\{\frac{|\bx|\tau^2}{(1-|\bx|)^2} + \frac{|\by|t^2}{(1-|\by|)^2}\Big\}. \]
Suppose, then that the conditions~\eqref{eq:proof-lemma5ii-conds} hold for some~$c>0$. In the same way as before, we write the left-hand side as~$S_1 + S_2$, where
\[ S_1 := \Im\Big\{\frac{-\tau\bx}{1-\bx}\Big\}\Big(-\frac12+\fr(\bz)\Big) + \Im\Big\{\frac{-t\by}{1-\by}\Big\}\Big(-\frac12-\fr(\bz)\Big), \]
\[ S_2 := \fs(\bz)\Re\Big\{\frac{-\tau\bx}{1-\bx}+\frac{t \by}{1-\by}\Big\}, \]
and~$\bz=(1-\bx)/(1-\by)$. Here the function~$\fr(\bz)$ and~$\fs(\bz)$ are again defined by~\eqref{def-frfs}. Now we note that
\[ \Im\Big\{\frac{-\tau\bx}{1-\bx}\Big\} = -\frac{|\bx|\tau \sin\tau}{(1-|\bx|)^2} = -\{1+O(c)\}\frac{|\bx|\tau^2}{(1-|\bx|)^2}, \]
\[ \Re\Big\{\frac{\tau\bx}{1-\bx}\Big\} = \frac{|\bx|\tau(\cos\tau - |\bx|)}{(1-|\bx|)^2} = \{1+O(c)\}\frac{|\bx|\tau}{1-|\bx|} \]
granted that~$c$ is small enough in terms of~$\ee$ (note that~$|1-\bx| = (1+O(c))(1-|\bx|)$). Moreover, we have
\[ \arg(1-\bx) = \arctan\Big(\frac{|\bx|\sin\tau}{1-|\bx|\cos(\tau)}\Big) = \{1+O(c)\}\frac{|\bx|\tau}{1-|\bx|} = O(|\bx|\tau). \]
By symmetry the same estimates hold for the analogous quantities for~$\by$. Note that by our hypotheses we have~$|\bz| = O(1)$, which implies
\[ \fr(\bz) = \fr(|\bz|) + O(c), \qquad \fs(\bz) = (\arg \bz)\Big\{\Big[\frac{\partial\ \fs(\bz')}{\partial(\arg \bz')}\Big]_{\bz'=|\bz|} + O(c)\Big\}. \]
Let
\[ \psi_1(\rho) := \fr(\rho) = -\frac12+ \frac1{\log\rho}-\frac1{\rho-1},\quad \psi_2(\rho) := \Big[\frac{\partial\fs(\bz')}{\partial(\arg \bz')}\Big]_{\bz'=\rho} = \frac{\rho}{(\rho-1)^2}-\frac1{(\log\rho)^2}. \]
We regroup the estimates above to obtain
\[ S_1 = \frac{|\bx|\tau^2}{(1-|\bx|)^2}\Big(-\frac12+\psi_1(|\bz|)\Big) + \frac{|\by|t^2}{(1-|\by|)^2}\Big(-\frac12-\psi_1(|\bz|)\Big) + O\big(c(|\bx|\tau^2+|\by|t^2)\big), \]
\[ S_2 = -\psi_2(|\bz|)\Big(\frac{-|\bx|\tau}{1-|\bx|}+\frac{|\by|t}{1-|\by|}\Big)^2 + O(c(|\bx|\tau + |\by|t)^2). \]
Here we used the fact that the quantities~$\fr(|\bz|)$ and~$\fs(\bz)$ are uniformly bounded. Note that~$\psi_2(|\bz|)<0$. By the upper bound~$(\lambda-\mu)^2\leq 2(\lambda^2+\mu^2)\ $ ($\lambda, \mu\in\bfR$), we get
\[ S\leq -\frac{|\bx|\tau^2}{(1-|\bx|)^2}\Big(\frac12-\psi_1(|\bz|)+2|\bx|\psi_2(|\bz|)\Big) -\frac{|\by|t^2}{(1-|\by|)^2}\Big(\frac12+\psi_1(|\bz|)+2|\by|\psi_2(|\bz|)\Big) + O(c(|\bx|\tau^2+|\by|t^2)) .\]
It is easily seen that~$1/2 - |\psi_1(\rho)|  + 2\psi_2(\rho) > 0$ for~$\rho>0$. Since~$|\bz|$ is bounded in terms of~$\ee$, it follows that both parentheses in the above expression are greater than some~$c' = c'(\ee)>0$, so that
\[ S\leq -\big(c'+O(c)\big)\Big\{\frac{|\bx|\tau^2}{(1-|\bx|)^2} + \frac{|\by|t^2}{(1-|\by|)^2}\Big\}. \]
Then, choosing~$c$ sufficiently small relative to~$c'$, we have
\[ S\leq -c\Big\{\frac{|\bx|\tau^2}{(1-|\bx|)^2} + \frac{|\by|t^2}{(1-|\by|)^2}\Big\} \]
as required.
\end{proof}

\bibliographystyle{amsalpha}
\bibliography{arcsinus-friable}

\end{document}